\newcommand{\eps}{\ensuremath{\epsilon}}
\newcommand{\aA}{\ensuremath{\mathcal{A}}}
\newcommand{\bB}{\ensuremath{\mathcal{B}}}
\newcommand{\dD}{\ensuremath{\mathcal{D}}}
\newcommand{\fF}{\ensuremath{\mathcal{F}}}
\newcommand{\pP}{\ensuremath{\mathcal{P}}}
\renewcommand{\phi}{\varphi}
\title{Random attractors for stochastic evolution equations driven by fractional Brownian motion
\thanks{H. Gao: Supported by a China NSF Grant
11171158, National Basic Research Program of China (973 Program) No. 2013CB834100,  the Natural Science Foundation of Jiangsu Province (BK2011777),
Qing Lan and "333" Project of Jiangsu Province and the NSF of the Jiangsu Higher Education
Committee of China (11KJA110001). \newline M.J. Garrido-Atienza and B. Schmalfu{\ss}: Partially supported by the European Funds for Regional Development and Ministerio de Economia y Competitividad (Spain) under grant MTM2011-22411.}}
\author{Hongjun Gao\thanks{Institute of Mathematics,
School of Mathematical Sciences, Nanjing Normal University, Nanjing 210046, China, ({\tt gaohj@njnu.edu.cn}).}
\and
Mar\'{\i}a J. Garrido-Atienza\thanks{Dpto. Ecuaciones Diferenciales y An\'alisis Num\'erico, Universidad de Sevilla, Apdo. de Correos 1160, 41080-Sevilla, Spain, ({\tt mgarrido@us.es}).}
\and
Bj{\"o}rn Schmalfu{\ss }\thanks{Institut f\"{u}r Mathematik,
Institut f{\"u}r Stochastik, Ernst Abbe Platz 2, 07737, Jena, Germany,  ({\tt bjoern.schmalfuss@uni-jena.de}).}}
\begin{document}

\maketitle

\begin{abstract}
The main goal of this article is to prove the existence of a random attractor for a stochastic evolution equation driven by a fractional Brownian motion with $H\in (1/2,1)$. We would like to emphasize that we do not use the usual cohomology method, consisting of transforming the stochastic equation into a random one, but we deal directly with the stochastic equation. In particular, in order to get adequate a priori estimates of the solution needed for the existence of an absorbing ball, we will introduce stopping times to control the size of the noise.
In a first part of this article we shall obtain the existence of a pullback attractor for the non-autonomous dynamical system generated by the pathwise mild solution of an nonlinear infinite-dimensional evolution equation with non--trivial H\"older continuous driving function. In a second part, we shall consider the random setup: stochastic equations having as driving process a fractional Brownian motion with $H\in (1/2,1)$. Under a smallness condition for that noise we will show the existence and uniqueness of a random attractor for the stochastic evolution equation.
\end{abstract}

\begin{keywords}
Fractional derivatives; pathwise mild solutions; non-autonomous and random dynamical systems; fractional Brownian motion; pullback and random attractor.
\end{keywords}

\begin{AMS} 37L55, 60H15, 37L25, 35R60, 58B99.
\end{AMS}

\pagestyle{myheadings}
\thispagestyle{plain}
\markboth{H. Gao, M.J. Garrido-Atienza and B. Schmalfu\ss}{Random Attractors for SPDEs driven by an fBm}

\section{Introduction}
This article shows the existence of random attractors for a new class of stochastic evolution equations.
These equations contain a nontrivial fractional noise with a Hurst parameter $H>1/2$.
In particular, it generalizes  the
results of the conference proceedings by Garrido-Atienza {\it et al.} \cite{GMS08}, where random attractors are studied for ordinary
stochastic equations containing a fractional noise. The idea of this article is based on the modern theory of stochastic integration
for fractional Brownian motions (fBm) with $H>1/2$, see for instance Z{\"a}hle \cite{Zah98}. These integrals are related to the Young integration \cite{You36} and one of its presentations is based on a sort of generalized integration by parts formula with respect to fractional derivatives. The main advantage of this integration with respect to the classical It\^o integration theory, where the integrator is a white noise (or equivalently a fractional Brownian motion with Hurst parameter $H=1/2$), is that integrals can be defined pathwise. However, It\^o integrals are only defined almost surely and exceptional sets may depend on the integrand. But this fact contradicts the definition of a random dynamical system, where initial state dependent exceptional sets are not allowed. By using fractional derivatives to define stochastic integrals we are able to avoid this dependence on exceptional sets. \\
We remark that the results presented in this article do not cover the white noise case. For the existence of pathwise solutions
for this case (and more general for the cases in which $H\in (1/3,1/2]$) we refer to Garrido-Atienza {\it et al.} \cite{GLS12a}, \cite{GLS12b} and the forthcoming paper \cite{Gar12}. We want to emphasize here that the techniques to obtain such a pathwise solution are qualitatively different from the methods presented in this article.\\
This article can be seen to be divided into two parts. In the first part, Sections 2 and 3, we mention that nonlinear infinite-dimensional evolution equations driven by a H\"older continuous function with H\"older index greater than $1/2$ have a pathwise mild solution which generates a non-autonomous dynamical system, and obtain the existence of a unique pullback attractor associated to it if we restrict this dynamical system to a discrete time set. Existence results for this kind of equations have been already studied in Maslowski and Nualart \cite{MasNua03} and in Garrido-Atienza {\it et al.} \cite{GLS09} when the driving function is an fBm with $H>1/2$, and very recently by Chen {\it et al.} \cite{ChGGSch12}. In this last paper, the authors have been able to overcome the lack of the regularity of the semigroup generated by the linear part of the evolution equation, by considering suitable modifications of the space of $\beta$--H{\"o}lder continuous functions as phase space, where $\beta>1/2$ is related to the H\"older index of the driving function, and have established the existence and uniqueness of a pathwise mild solution. These results are thus shortly included as a preliminary step, where the construction of the integral for H\"older-continuous integrators is the main tool. Then, by standard arguments we can consider the pathwise mild solution to be a non-autonomous dynamical system. According to Flandoli and Schmalfu\ss \, \cite{FlaSchm96} or Schmalfu\ss \, \cite{Schm92}, in order to prove the existence of a non-autonomous or pullback attractor, appropriate a priori estimates of the solution are necessary. However,
by the particular structure of the estimates of the pathwise integrals, the standard Gronwall lemma is not available and, therefore, we have to formulate a special Gronwall lemma for discrete time, which will be shown to work only when the estimates of these integrals are not too large. To ensure that this smallness condition holds true we will introduce (non-Markovian) stopping times with the property that they themselves satisfy the cocycle property, and assuming that these stopping times have special asymptotic properties, the existence of a unique pullback attractor is obtained.

Then, in a second part of the article (Section 4), we show how to adapt the previous results to obtain the random attractor associated to stochastic evolution equations driven by an fBm with $H>1/2$. In a first step, by choosing the canonical version of the fBm, we show that the non-autonomous dynamical system becomes a random dynamical system and that the stopping times are measurable. In addition, it suffices to choose fractional noises with covariance operators having small trace, since this condition ensures that the required asymptotical properties of the stopping times hold with probability one.
All the previous study finally concludes with the existence and uniqueness of a random attractor.

\section{Preliminaries} In this section we collect definitions and properties of dynamical systems and present the construction of the integral with H\"older continuous integrator and H\"older exponent greater than $1/2$. In spite of the fact that this construction has been already done in the recent paper \cite{ChGGSch12}, we present it here for the sake of completeness, since the well understanding of this integral is the starting point to obtain the cocycle property and afterwards the random attractor associated to the corresponding stochastic evolution equation.

\subsection{Dynamical systems}\label{s2}
Let $(V,|\cdot|)$ be a Banach space and let $T^+=R^+$ or $Z^+$. A mapping $\phi:T^+\times V\to V$
having the {\em semigroup} property
\[\phi(t,\cdot)\circ\phi(\tau,u_0)=\phi(t+\tau,u_0),\qquad\phi(0,u_0)=u_0\qquad t,\,\tau\in T^+, \quad u_0\in V\]
is called an {\em (autonomous) dynamical system}. A dynamical system $\phi$ has a {\em global attractor} $\aA\subset V$ with respect to the set system $\dD$, consisting of the bounded sets of $V$, if $\aA$ is non-empty, compact, invariant, that is
$$\varphi(t,\aA)=\aA,  \qquad {\rm for \, all }\ t \in T^+$$
 and
\[\lim_{T^+\ni t\to\infty}{\rm dist}(\phi(t,D),\aA)=0 \qquad {\rm for \, every }\ D\in\dD.\]
${\rm dist}(X,Y)$ denotes the Hausdorff-semidistance defined by
\begin{eqnarray*}
    {\rm dist}(X,Y)=\sup_{x\in X}\inf_{y\in Y}|x-y|.
\end{eqnarray*}
A comprehensive presentation of the concept of attractors can be found in the monographs by Babin and Vishik \cite{BaVis92}, Hale
\cite{Hale} or Temam \cite{Tem97}.

\medskip

We want to consider a generalization of the concept of global attractors to {\em non-autonomous} and {\em random dynamical systems}  given by the so called {\em pullback attractors}. As a first ingredient, for the {\em time set} $T=R$ or $Z$, we introduce the {\em flow} $(\theta_t)_{t\in T}$ on the set $\Omega$ of {\em non-autonomous perturbations} by
\begin{eqnarray*}
&&\theta:T\times \Omega\to\Omega\\
    &&\theta_t\circ\theta_\tau=\theta_{t+\tau},\quad\theta_0\omega=\omega\quad {\rm for }\ t,\tau\in T,\;\omega\in\Omega.
\end{eqnarray*}
The easiest example for such a flow is given by $\Omega=T$ and $\theta_ji=i+j$ for $j\in T$.

As a generalization of the semigroup property we consider a {\em cocycle} to be a mapping $\phi:T^+\times\Omega\times V\to V$ such that
\begin{eqnarray}
   \phi(t+\tau,\omega,u_0)&=&\phi(t,\theta_\tau\omega,\cdot)\circ\phi(\tau,\omega,u_0),\nonumber \\[-1.5ex]
    \label{cocycle}\\[-1.5ex]
\phi(0,\omega,u_0)&=&u_0,\nonumber
\end{eqnarray}
for all $t,\,\tau\in T^+$, $u_0\in V$ and $\omega\in\Omega$. $\phi$ is also called a {\em non-autonomous dynamical system.}\\

Let us now equip $(\Omega,\theta)$ with a measurable structure. Consider the probability space $(\Omega,\fF,P)$ where $\fF$ is a $\sigma$-algebra on $\Omega$ and $P$ is a measure invariant and {\em ergodic} with respect to $\theta$.
Then $(\Omega,\fF,P,\theta)$ is called an {\em ergodic metric dynamical system}. A $\bB(T^+)\otimes\fF\otimes \bB(V),\bB(V)$
measurable mapping $\phi$ having the cocycle property (\ref{cocycle}) is called a {\em random dynamical system (RDS)} (with respect to the
metric dynamical $(\Omega,\fF,P,\theta)$).

\medskip

Before we give the notion of an attractor for a non-autonomous dynamical system we introduce set systems that will be attracted
by that attractor. Let $\dD$ be a set of families $(D(\omega))_{\omega\in\Omega}$ such that $\emptyset\not=D(\omega)\subset V$ and satisfying a general property $\pP$, which will be determined later. In addition we assume the following completeness condition for $\dD$: let $D^\prime=(D^\prime(\omega))_{\omega\in\Omega}$ such that
$\emptyset\not=D^\prime (\omega)\subset V$ and satisfies the property $\pP$, if moreover there exists a $D\in\dD$ such that $D^\prime(\omega)\subset D(\omega)$ for $\omega\in\Omega$, then $D^\prime\in \dD$. For a given $\nu>0$, an example of such a system is the {\em backward $\nu$--exponentially growing sets} $\dD^\nu$: $D=(D(\omega))_{\omega\in\Omega}$ is an element of this set system if there exists a mapping $\Omega\ni \omega\mapsto r(\omega)\in R^+$ such that $\emptyset
\not=D(\omega)\subset B_V(0,r(\omega))$ and
\begin{eqnarray}\label{eq11}
    \limsup_{T^\ni t\to-\infty}\frac{\log^+ r(\theta_t\omega)}{|t|}<\nu.
\end{eqnarray}
We sometimes write $\dD_{Z,V}^\nu$ or $\dD_{R,V}^\nu$ to emphasize the corresponding time set.

We now consider these families of sets in a random set up. Assuming  that $V$ is a {\em separable} Banach space, we consider
$D=(D(\omega))_{\omega\in\Omega}$ such that $\emptyset\not=D(\omega)\subset V$ is closed, and verifying the following property $\pP$: the mapping
\begin{eqnarray*}
    \omega\mapsto {\rm dist}(\{u\},D(\omega))
\end{eqnarray*}
is a random variable for $u\in V$. Then $D$ is a random set, see Castaing and Valadier \cite{CasVal77} Chapter III. Indeed, we are interested in considering the set $\hat\dD$ of random sets having a backward and forward exponential growth ($\nu=0$), which means that for a random set $D$ there exists a positive random variable $r$ such that $D(\omega)\subset B_V(0,r(\omega))$ and
\begin{eqnarray}\label{eq12}
    \lim_{T\ni  t\to\pm\infty}\frac{\log^+ r(\theta_t\omega)}{|t|}=0\qquad {\rm for }\ \omega\in\Omega.
\end{eqnarray}
These sets are also called the {\em random tempered sets}. In this case we are only interested in the time set $T=R$.

By  the following lemma it is only interesting to consider the case $\nu=0$.\\
\begin{lemma}\label{l8}
Let $(\Omega,\fF,P)$ be an ergodic metric dynamical system. If $D$ is a $\nu$--exponentially growing random set ($\nu<\infty$) then $D$ is just exponentially growing.
\end{lemma}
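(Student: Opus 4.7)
The plan is to use ergodicity to promote the strict inequality in the hypothesis to the sharp equality $\limsup_{t \to -\infty} \log^+ r(\theta_t \omega)/|t| = 0$ almost surely. First I would note that the random variable $\alpha_0(\omega) := \limsup_{t \to -\infty} \log^+ r(\theta_t \omega)/|t|$ is $\theta$-invariant, since after the substitution $u = t + s$ one has $|u - s|/|u| \to 1$ as $u \to -\infty$ for every fixed $s$. By ergodicity $\alpha_0$ is therefore a.s.\ equal to a constant, which by hypothesis lies in $[0, \nu)$; the task reduces to ruling out $\alpha_0 > 0$.

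Suppose for contradiction $\alpha_0 > 0$. Fix any small $\epsilon > 0$ and introduce the a.s.-finite measurable time
\[
T(\omega) := \inf\bigl\{\tau \geq 0 : \log^+ r(\theta_{-u}\omega) \leq (\alpha_0 + \epsilon)\, u \mbox{ for all } u \geq \tau\bigr\},
\]
and pick $M > 0$ so that $p := P(A_M) > 0$, where $A_M := \{T \leq M\}$. The key observation is that if $\theta_{-n}\omega \in A_M$ and $s \geq n + M$, then applying the defining bound of $T$ at the shifted point $\theta_{-n}\omega$ with argument $u = s - n$ yields
\[
\log^+ r(\theta_{-s}\omega) \leq (\alpha_0 + \epsilon)(s - n).
\]

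The heart of the argument is then a density step. Birkhoff's ergodic theorem applied to the indicator of $A_M$ gives, for a.e.\ $\omega$, that the visit set $\{n \in \N : \theta_{-n}\omega \in A_M\}$ has asymptotic density $p > 0$; hence, for any $\epsilon' \in (0,1)$ and all sufficiently large $s$, the window $[s(1 - \epsilon'), s - M]$ contains at least one such index $n$ (its count is of order $p(s\epsilon' - M)$). The previous bound then yields $\log^+ r(\theta_{-s}\omega)/s \leq (\alpha_0 + \epsilon)\epsilon'$, and letting $s \to \infty$ then $\epsilon, \epsilon' \downarrow 0$ forces $\alpha_0 \leq 0$, contradicting $\alpha_0 > 0$. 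The main technical point is this quantitative localisation of a visit to $A_M$ inside the narrow window $[s(1-\epsilon'), s - M]$, which I would handle by subtracting the Birkhoff counts on $[0, s - M]$ and $[0, s(1 - \epsilon')]$.
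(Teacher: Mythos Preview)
Your argument is correct and gives a self-contained proof that the backward limsup $\alpha_0$ must vanish, but it takes a very different route from the paper. The paper's ``proof'' is a one-line citation of Arnold, \emph{Random Dynamical Systems}, Proposition~4.1.3, which asserts the dichotomy
\[
\lim_{t\to+\infty}\frac{\log^+ r(\theta_t\omega)}{t} \;=\; \lim_{t\to-\infty}\frac{\log^+ r(\theta_t\omega)}{|t|}\;\in\;\{0,+\infty\}
\]
for any nonnegative random variable $r$ on an ergodic metric dynamical system; the hypothesis $\alpha_0<\nu<\infty$ then forces the common value to be~$0$. What you have done is essentially reprove the backward half of this dichotomy from first principles, via the $\theta$-invariance of $\alpha_0$ and a Birkhoff recurrence argument to the set $A_M=\{T\le M\}$. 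This is more laborious but genuinely illuminating: it exposes the mechanism (visits to $A_M$ with positive density allow one to ``restart'' the tail bound arbitrarily close to~$s$) that underlies Arnold's result, at the cost of some measurability bookkeeping for $T(\omega)$ that you gloss over.

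One point to flag: the conclusion the paper actually draws (via Arnold) covers \emph{both} time directions, whereas your argument as written only treats $t\to-\infty$. The forward direction does follow by the same idea, but not by symmetry alone, since you have no a~priori bound on the forward limsup. Instead, apply Birkhoff in the forward direction to find visits $\theta_n\omega\in A_M$ with $n$ in a window $[v+M,\,v(1+\epsilon')]$ just above a given $v>0$; the \emph{backward} bound from the point $\theta_n\omega$ then controls $\log^+ r(\theta_v\omega)\le \epsilon(n-v)\le \epsilon\epsilon' v$. You should state this explicitly. A minor stylistic remark: the contradiction hypothesis $\alpha_0>0$ is never used; your chain of inequalities already yields $\alpha_0\le(\alpha_0+\epsilon)\epsilon'$ and hence $\alpha_0=0$ directly.
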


The result follows directly, since by Arnold \cite{Arn98} Proposition 4.1.3 we have that
\begin{eqnarray}
  \lim_{t\to+\infty}\frac{\log^+ r(\theta_t\omega)}{t} &=& \lim_{t\to-\infty}\frac{\log^+ r(\theta_t\omega)}{|t|}\in\{0,+\infty\}.\label{eq212}
\end{eqnarray}
\\

A family $\aA=(\aA(\omega))_{\omega\in\Omega}$ is a {\em pullback attractor} for the non-autonomous dynamical system $\phi$ with respect to $\dD$ if
\begin{eqnarray*}
&&\aA\in\dD,\quad \aA(\omega)\not=\emptyset\quad{ \rm compact},\\
 &   &\phi(t,\omega,\aA(\omega))=\aA(\theta_t\omega),\quad {\rm for \, all }\ t\in T^+,\,\omega\in\Omega,\\
   & &\lim_{T\ni t\to\infty}{\rm dist}(\phi(t,\theta_{-t}\omega,D(\theta_{-t}\omega)),\aA(\omega))=0,\quad {\rm for \, all }\ \omega\in\Omega,\,D\in \dD.
\end{eqnarray*}
In the case that $\phi$ is an RDS and $\hat\dD$ consists of the random tempered sets introduced above we call $\aA$ a random attractor. This notion has been introduced in Schmalfu{\ss} \cite{Schm92}.

Finally, a family $B=(B(\omega))_{\omega\in\Omega}$ is called pullback absorbing for $\dD$ if
\begin{eqnarray*}
    \phi(t,\theta_{-t}\omega,D(\theta_{-t}\omega))\subset B(\omega) \quad {\rm for }\ t\ge \tilde T(D,\omega)\in T^+
\end{eqnarray*}
for any $D\in\dD$ and $\omega\in\Omega$. $\tilde T(D,\omega)$ is the so-called absorption time. \\

We have the following main theorem about the existence of a pullback/random attractor (see Flandoli and Schmalfu{\ss} \cite{FlaSchm96} or Schmalfu{\ss} \cite{Schm99a}):

\begin{theorem}\label{t3}
Let $\phi$ be a continuous non-autonomous dynamical system.  Suppose that $\phi$ has a compact pullback absorbing set $B$ in $\dD$.
Then $\phi$ has a unique pullback attractor with respect to $\dD$. If $\phi$ is a random dynamical system and $\dD=\hat \dD$ consists of the
tempered sets introduced above, then the associated pullback attractor is a random attractor. In both cases
$$\aA(\omega)=\bigcap_{\tau \geq T_0(B,\omega)} \overline{\bigcup_{t\geq \tau} \phi(t,\theta_{-t} \omega,B(\theta_{-t}\omega))}$$
being $T_0(B,\omega)$ the absorbing time corresponding to $B$.
\end{theorem}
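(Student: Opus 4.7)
The plan is to prove Theorem \ref{t3} along the classical Flandoli--Schmalfu{\ss} lines, taking the candidate attractor to be the omega-limit set
\[
\aA(\omega)=\bigcap_{\tau\ge T_0(B,\omega)}\overline{\bigcup_{t\ge\tau}\phi(t,\theta_{-t}\omega,B(\theta_{-t}\omega))}
\]
and verifying one by one the properties: nonemptiness/compactness, pullback attraction, invariance, membership in $\dD$ (respectively $\hat\dD$), uniqueness, and (in the random case) measurability.

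First I would establish the basic topological properties. Since $B$ is pullback absorbing, for $t\ge T_0(B,\omega)$ we have $\phi(t,\theta_{-t}\omega,B(\theta_{-t}\omega))\subset B(\omega)$, so each set in the intersection defining $\aA(\omega)$ is a closed subset of the compact set $B(\omega)$. The family is decreasing in $\tau$, hence by the finite intersection property $\aA(\omega)$ is nonempty and compact. Next I would verify pullback attraction for an arbitrary $D\in\dD$: applying the absorption property at time $t$ and the cocycle identity, $\phi(t+s,\theta_{-t-s}\omega,D(\theta_{-t-s}\omega))\subset \phi(s,\theta_{-s}\omega,B(\theta_{-s}\omega))$ for $t\ge \tilde T(D,\theta_{-s}\omega)$, so attraction of $D$ reduces to attraction of $B$, which follows from a standard argument using the compactness of $B(\omega)$ and the omega-limit construction: if attraction fails, one extracts a subsequence $t_n\to\infty$ and points $x_n\in \phi(t_n,\theta_{-t_n}\omega,B(\theta_{-t_n}\omega))$ staying a positive distance from $\aA(\omega)$, and any accumulation point of $x_n$ would lie in $\aA(\omega)$, a contradiction.

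Invariance is the step where continuity of $\phi(t,\omega,\cdot)$ enters crucially. For $t\in T^+$, the cocycle property gives
\[
\phi(t,\omega,\cdot)\circ\phi(s,\theta_{-s}\omega,\cdot)=\phi(t+s,\theta_{-s}\omega,\cdot)=\phi(t+s,\theta_{-(t+s)}(\theta_t\omega),\cdot).
\]
Using this identity together with continuity of $\phi(t,\omega,\cdot)$, which allows exchanging $\phi(t,\omega,\cdot)$ with closures of images of the compact set $B$, one obtains $\phi(t,\omega,\aA(\omega))=\aA(\theta_t\omega)$. The membership $\aA\in\dD$ (resp. $\aA\in\hat\dD$ in the random case) follows since $\aA(\omega)\subset B(\omega)$ and $B\in\dD$, combined with the completeness assumption on $\dD$. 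Uniqueness is standard: if $\aA'$ is another such attractor, then $\aA$ attracts $\aA'$ (and conversely), and invariance plus compactness force $\aA=\aA'$.

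The main obstacle, and also the only part specific to the random setting, is measurability: showing that $\omega\mapsto\aA(\omega)$ is a random closed set in the sense that $\omega\mapsto\mathrm{dist}(u,\aA(\omega))$ is measurable for every $u\in V$. Here one exploits separability of $V$ and the joint measurability of $\phi$ from the RDS definition, together with the fact that $B$ is itself a random set and $T_0(B,\omega)$ can be chosen measurable (e.g. as the first integer for which the absorption inclusion holds). The key identity is
\[
\mathrm{dist}(u,\aA(\omega))=\lim_{\tau\to\infty}\mathrm{dist}\bigl(u,\overline{\textstyle\bigcup_{t\ge\tau}\phi(t,\theta_{-t}\omega,B(\theta_{-t}\omega))}\bigr),
\]
and one writes the union inside as a countable union over rationals (using continuity of $\phi$ in $t$ if $T^+=R^+$, and trivially if $T^+=Z^+$) and over a dense countable subset of $B(\omega)$ obtained through a Castaing representation of the random set $B$. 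This reduces the measurability of the distance to $\aA(\omega)$ to the measurability of countably many distances $\mathrm{dist}(u,\phi(t,\theta_{-t}\omega,b_n(\theta_{-t}\omega)))$, which follows from the joint measurability of $\phi$ and the measurability of $b_n$. The tempered property for $\aA$ is then automatic from $\aA\subset B$ and $B\in\hat\dD$. This concludes the proof in both the non-autonomous and the random cases.
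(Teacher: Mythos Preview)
The paper does not supply its own proof of Theorem~\ref{t3}; it states the result and refers to Flandoli and Schmalfu{\ss}~\cite{FlaSchm96} and Schmalfu{\ss}~\cite{Schm99a}. Your proposal reconstructs precisely the classical argument from those references---omega-limit set of the absorbing family, compactness via nested closed subsets of $B(\omega)$, attraction by a subsequence-contradiction argument, invariance via the cocycle identity plus continuity, uniqueness by mutual attraction, and measurability through a Castaing representation---so it is correct and in line with the cited sources.

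One small remark: the measurability of the absorption time $T_0(B,\omega)$ is not actually needed for the measurability of $\aA(\omega)$, since the intersection defining $\aA(\omega)$ is unchanged if one lets $\tau$ run over all of $T^+$ (the extra sets for small $\tau$ are supersets of the others and drop out of the intersection). This slightly simplifies your measurability step.
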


\subsection{Integrals   in Hilbert-spaces for H{\"o}lder continuous integrators with H{\"o}lder exponent $>1/2$}\label{ss2.2}

We begin this subsection by introducing some function spaces. Assume $(V,(\cdot,\cdot),|\cdot|)$ is a separable Hilbert space with the complete orthonormal base $(e_i)_{i\in N}$. Let $C^\beta([T_1,T_2];V)$ be the Banach space of {\em H{\"o}lder continuous} functions with exponent $\beta>0$ having values in $V$. A norm on this space is given by
\begin{eqnarray*}
    \|u\|_{\beta}=\|u\|_{\beta,T_1,T_2}=\sup_{s\in [T_1,T_2]}|u(s)|+|||u|||_{\beta,T_1,T_2},
\end{eqnarray*}
with
\begin{eqnarray*}
|||u|||_{\beta,T_1,T_2}=\sup_{T_1\le s<t\le T_2}\frac{|u(t)-u(s)|}{|t-s|^\beta}.
\end{eqnarray*}
$C([T_1,T_2];V)$ denotes the space of  continuous functions on $[T_1,T_2]$ with values in $V$ with finite supremum norm, and let $C^{\beta,\sim}([T_1,T_2];V) \subset C([T_1,T_2];V)$ equipped with the norm
\begin{eqnarray*}
    \|u\|_{\beta,\sim}=\|u\|_{\beta,\sim,T_1,T_2}=\sup_{s\in[T_1,T_2]}|u(s)|+\sup_{T_1< s<t\le T_2}(s-T_1)^\beta\frac{|u(t)-u(s)|}{|t-s|^\beta}.
\end{eqnarray*}
For every $\rho>0$ we can consider the equivalent norm

\begin{eqnarray}
\|u\|_{\beta,\rho,\sim,T_1,T_2}&=&\sup_{s\in[T_1,T_2]}e^{-\rho(s-T_1)}|u(s)|\nonumber \\
&+&\sup_{T_1< s<t\le T_2}(s-T_1)^\beta e^{-\rho(t-T_1)}\frac{|u(t)-u(s)|}{|t-s|^\beta}.\label{eq38}
\end{eqnarray}
When no confusion is possible, we will write $\|u\|_{\beta,\rho,\sim}$ instead of $\|u\|_{\beta,\rho,\sim,T_1,T_2}$.\\

\begin{lemma}\label{l1} (see \cite{ChGGSch12})
$C^{\beta,\sim}([T_1,T_2];V)$  is a Banach space.\\
\end{lemma}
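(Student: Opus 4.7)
The plan is to follow the standard completeness scheme: reduce to uniform convergence in $C([T_1,T_2];V)$, then upgrade to the weighted Hölder seminorm by passing to a limit inside the seminorm. That $\|\cdot\|_{\beta,\sim}$ is genuinely a norm (positive homogeneity, triangle inequality, definiteness) follows immediately from the corresponding properties of $\sup_{s\in[T_1,T_2]}|u(s)|$ together with the observation that the weighted Hölder expression is a seminorm whose zero set contains the zero set of the sup part; I would dispose of this in a single sentence.

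For completeness, take a Cauchy sequence $(u_n)\subset C^{\beta,\sim}([T_1,T_2];V)$. Since $\sup_{s}|u_n(s)-u_m(s)|\le \|u_n-u_m\|_{\beta,\sim}$, the sequence is Cauchy in the Banach space $C([T_1,T_2];V)$, so there exists $u\in C([T_1,T_2];V)$ with $u_n\to u$ uniformly. It remains to check that $u\in C^{\beta,\sim}([T_1,T_2];V)$ and that $\|u_n-u\|_{\beta,\sim}\to 0$.

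To show $u$ lies in the space, I would fix $T_1<s<t\le T_2$ and exploit the boundedness of $(\|u_n\|_{\beta,\sim})_n$ (which follows from the Cauchy property) to write
\begin{eqnarray*}
(s-T_1)^\beta\frac{|u(t)-u(s)|}{|t-s|^\beta}
&=&\lim_{n\to\infty}(s-T_1)^\beta\frac{|u_n(t)-u_n(s)|}{|t-s|^\beta}\\
&\le&\sup_n \|u_n\|_{\beta,\sim}<\infty,
\end{eqnarray*}
using pointwise (indeed uniform) convergence $u_n(s)\to u(s)$, $u_n(t)\to u(t)$. Taking the supremum over $T_1<s<t\le T_2$ yields finiteness of the weighted seminorm of $u$.

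For the convergence $u_n\to u$ in $\|\cdot\|_{\beta,\sim}$, I would fix $\varepsilon>0$ and choose $N$ so that $\|u_n-u_m\|_{\beta,\sim}<\varepsilon$ for $n,m\ge N$. For every $T_1<s<t\le T_2$ and $n\ge N$, letting $m\to\infty$ in
\begin{eqnarray*}
(s-T_1)^\beta\frac{|(u_n-u_m)(t)-(u_n-u_m)(s)|}{|t-s|^\beta}\le \varepsilon
\end{eqnarray*}
gives the same bound with $u_m$ replaced by $u$; taking the sup over $s<t$ and combining with the already-established uniform convergence yields $\|u_n-u\|_{\beta,\sim}\le 2\varepsilon$ for $n\ge N$. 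The only mild subtlety is the weighting factor $(s-T_1)^\beta$ degenerating at $s=T_1$, but this degeneration works in our favour (it only shrinks the expression under consideration), so no extra care is needed there. Overall, the proof is a routine completeness argument and I expect no real obstacle beyond bookkeeping.
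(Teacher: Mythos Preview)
Your argument is correct and is the standard completeness proof for weighted H\"older-type spaces. Note, however, that the paper does not supply its own proof of this lemma: it simply refers to \cite{ChGGSch12}, so there is no in-paper argument to compare against; your routine $\varepsilon$--$N$ passage to the limit is exactly what one would expect the cited reference to contain.
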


Now we introduce integrals where the integrator is a H{\"o}lder continuous function. The definition of these integrals shall have as basement the definition and properties of the pathwise integrals given by Z{\"a}hle \cite{Zah98}. Let us then assume that $\tilde V,\,\hat V$ are separable Hilbert spaces, then for $0<\alpha<1$ and general measurable functions $K:[T_1,T_2]\mapsto \hat V$ and $\omega:[T_1,T_2]\mapsto \tilde V$, we define their Weyl fractional derivatives by
\begin{eqnarray*}
    D_{{T_1}+}^\alpha K[r]=\frac{1}{\Gamma(1-\alpha)}\bigg(\frac{K(r)}{(r-T_1)^\alpha}+\alpha\int_{T_1}^r\frac{K(r)-K(q)}{(r-q)^{1+\alpha}}dq\bigg)\in \hat V,\,\\
    D_{{T_2}-}^{1-\alpha} \omega_{T_2-}[r]=\frac{(-1)^{1-\alpha}}{\Gamma(\alpha)}
    \bigg(\frac{\omega(r)-\omega(T_2-)}{(T_2-r)^{1-\alpha}}
    +(1-\alpha)\int_r^{T_2}\frac{\omega(r)-\omega(q)}{(q-r)^{2-\alpha}}dq\bigg)\in
    \tilde V,
\end{eqnarray*}
where $ \omega_{T_2-}(r)= \omega(r)- \omega(T_2-)$, being $\omega(T_2-)$ the left side limit of $\omega$ at $T_2$. \\

Let $f\in L^1((T_1,T_2); R)$ and $\alpha>0$. Following Samko {\it et al.} \cite{Samko}, the left-sided and right-sided fractional Riemann-Liouville integrals of $f$ of order $\alpha$ are defined for almost all $x\in (T_1,T_2)$ by
\begin{eqnarray*}
I_{T_1+}^\alpha f(x) & = \frac{1}{\Gamma(\alpha)} \int_{T_1}^x (x-r)^{\alpha-1} f(r) dr,\\
I_{T_2-}^{1-\alpha } f(x)&= \frac{(-1)^{1-\alpha}}{\Gamma (\alpha)} \int_x^{T_2} (r-x)^{\alpha-1} f(r) dr
\end{eqnarray*}
respectively, where $\Gamma (\alpha)$ is the Euler function. Denote by $I_{T_1+}^\alpha (L_p((T_1,T_2);R))$ (resp. $I_{T_2-}^{\alpha} (L_{p^\prime}((T_1,T_2); R))$) the image of $L_p((T_1,T_2);R)$ (resp. $L_{p^\prime}((T_1,T_2); R))$) by the operator $I_{T_1+}^\alpha $ (resp. $I_{T_2-}^{\alpha}$).

Suppose now that $k \in I_{T_1+}^\alpha (L_p((T_1,T_2);R)),\, \zeta_{T_2-} \in
I_{T_2-}^{\alpha} (L_{p^\prime}((T_1,T_2); R))$ with $1/p+1/{p^\prime}\le 1$, being $\alpha p<1$, and that $k(T_1+)$, the right side limit of $k$ at $T_1$, exists. Then following Z\"ahle \cite{Zah98} we define
\begin{eqnarray}\label{eq10bis}
    \int_{T_1}^{T_2} kd\zeta=(-1)^\alpha\int_{T_1}^{T_2} D_{T_1+}^\alpha k[r]D_{T_2-}^{1-\alpha}\zeta_{T_2-}[r]dr,
\end{eqnarray}
where $ \zeta_{T_2-}(r)= \zeta(r)- \zeta(T_2-)$, for $r\in (T_1,T_2)$. In fact, under the above assumptions, the appearing fractional derivatives in the integrals are well defined taking $\tilde V=\hat V=R$.\\

Consider now the separable Hilbert space $L_2(V)$ of Hilbert-Schmidt operators on $V$ with the usual norm $\|\cdot\|_{L_2(V)}$ and inner product $(\cdot,\cdot)_{L_2(V)}$. A base in this space is given by
\begin{eqnarray}\label{base}    E_{ij}e_k=\left\{\begin{array}{lcl}
    0&:& j\not= k\\
    e_i &:& j= k.
    \end{array}
    \right.
\end{eqnarray}
Let us consider mappings $K:[0,T]\to L_2(V)$ and $\omega:[0,T]\to V$, such that $k_{ji}=(K,E_{ji})_{L_2(V)}\in I_{T_1+}^\alpha (L_p((T_1,T_2);R))$ and $k_{ji}(T_1+)$ exists and $\alpha p<1$. Moreover,  $\zeta_{iT_2-}=(\omega_{T_2-}(t),e_i)\in I_{T_2-}^{1-\alpha} (L_{p^\prime}((T_1,T_2); R))$, being $1/p+1/p^\prime\le 1$. In addition,
\begin{eqnarray*}
   [T_1,T_2]\ni r\mapsto  \|D_{T_1+}^\alpha K[r]\|_{L_2(V)}|D_{T_2-}^{1-\alpha} \omega_{T_2-}[r]|\in L_{1}((0,T);R).
\end{eqnarray*}
We then introduce
\begin{eqnarray}\label{eq3}
    \int_{T_1}^{T_2} K d\omega:= (-1)^\alpha\int_{T_1}^{T_2} D_{T_1+}^\alpha K[r]D_{T_2-}^{1-\alpha}\omega_{T_2-}[r]dr.
\end{eqnarray}
Due to Pettis' theorem and the separability of $V$ the integrand is weakly measurable and hence measurable. In addition, we can present this integral by
\begin{eqnarray*}
    \int_{T_1}^{T_2}Kd\omega=\sum_{j}\bigg(\sum_i\int_{T_1}^{T_2}
    D_{T_1+}^{\alpha}k_{ji}[r]D_{T_2-}^{1-\alpha}\zeta_{iT_2-}[r]dr \bigg) e_j,
\end{eqnarray*}
with norm given by
\begin{eqnarray*}
    \bigg|\int_{T_1}^{T_2}Kd\omega\bigg|
    &=&\bigg(\sum_j\bigg|\sum_{i}\int_{T_1}^{T_2}D_{T_1+}^{\alpha}k_{ji}[r]D_{T_2-}^{1-\alpha}\zeta_{iT_2-}[r]dr\bigg|^2\bigg )^\frac12\\
    &\le& \int_{T_1}^{T_2}\|D_{T_1+}^\alpha K[r]\|_{L_2(V)}|D_{T_2-}^{1-\alpha} \omega_{T_2-}[r]| dr.
\end{eqnarray*}

For $H>1/2$, in what follows we fix parameters $1/2<\beta<\beta^\prime<\beta^{\prime\prime}<H$. Under this choice, let $\Omega$ be the $(\theta_t)_{t\in R}$-invariant set of {\em paths} $\omega:R\to V$ which are $\beta^{\prime\prime}$-H{\"o}lder continuous on any compact subinterval of $R$ and are zero at zero.
Later we will need to formulate asymptotical conditions for the set of these paths.\\
As the flow $(\theta_t)_{t\in R}$ on $\Omega$ of non-autonomous perturbations we consider
\begin{eqnarray}\label{shift}
\theta:R\times \Omega\to\Omega, \qquad \theta_t \omega(\cdot)=\omega(t+\cdot)-\omega(t).
\end{eqnarray}

\begin{lemma}(see \cite{ChGGSch12}) \label{l3}
Suppose that $K:[T_1,T_2]\mapsto L_2(V)$ is $\beta$-H{\"o}lder--continuous where $1-\beta^\prime<\alpha<\beta$. Then the  integral
$$
\int_{T_1}^{T_2} K(r) d\omega(r)\in V
$$
is well defined in the sense of (\ref{eq3}).  If for $0\le \tau\le T_1$ the mapping $K:[T_1-\tau,T_2]\mapsto L_2(V)$ is $\beta$-H{\"o}lder--continuous, then
$$
\int_{T_1}^{T_2} K(r) d\omega(r)=\int_{T_1-\tau}^{T_2-\tau} K(r+\tau) d\theta_\tau\omega(r).
$$

\end{lemma}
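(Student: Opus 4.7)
The plan is to verify the two parts of the statement by checking the integrability of the integrand in definition \eqref{eq3} and then performing a careful change of variables for the shift identity.

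For the first part, I would estimate the two fractional derivatives separately and show that their product is Lebesgue integrable on $[T_1,T_2]$. From the definition of $D_{T_1+}^\alpha K$, the $\beta$--H\"older continuity of $K$ and the choice $\alpha<\beta$ give
\begin{eqnarray*}
\|D_{T_1+}^\alpha K[r]\|_{L_2(V)}\le \frac{1}{\Gamma(1-\alpha)}\bigg(\frac{\|K\|_{\infty}}{(r-T_1)^\alpha}+\frac{\alpha\,|||K|||_{\beta,T_1,T_2}}{\beta-\alpha}(r-T_1)^{\beta-\alpha}\bigg),
\end{eqnarray*}
so in particular $\|D_{T_1+}^\alpha K[\cdot]\|_{L_2(V)}\in L_p((T_1,T_2);R)$ for some $p$ with $\alpha p<1$. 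Next, since $\omega$ is $\beta''$-H\"older and hence $\beta'$-H\"older on $[T_1,T_2]$, and since $1-\alpha<\beta'$, a similar computation produces
\begin{eqnarray*}
|D_{T_2-}^{1-\alpha}\omega_{T_2-}[r]|\le \frac{|||\omega|||_{\beta',T_1,T_2}}{\Gamma(\alpha)}\bigg(1+\frac{1-\alpha}{\alpha+\beta'-1}\bigg)(T_2-r)^{\alpha+\beta'-1}.
\end{eqnarray*}
Because $\alpha<1$ and $\alpha+\beta'-1>0$, the product of these two bounds is integrable on $(T_1,T_2)$, which simultaneously ensures that the assumptions underlying \eqref{eq3} are satisfied and that the resulting $V$--valued integral, interpreted coordinatewise in the basis $(e_j)$, converges in $V$ with the norm estimate given after \eqref{eq3}.

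For the second part I would perform the change of variable $r\mapsto r+\tau$ in the right hand side of \eqref{eq3}. Setting $\widetilde K(s)=K(s+\tau)$ for $s\in[T_1-\tau,T_2-\tau]$, a direct substitution $q\mapsto q+\tau$ inside the defining integral gives
\begin{eqnarray*}
D_{T_1+}^\alpha K[s+\tau]
&=&\frac{1}{\Gamma(1-\alpha)}\bigg(\frac{\widetilde K(s)}{(s-(T_1-\tau))^\alpha}+\alpha\int_{T_1-\tau}^{s}\frac{\widetilde K(s)-\widetilde K(q)}{(s-q)^{1+\alpha}}dq\bigg)\\
&=&D_{(T_1-\tau)+}^\alpha \widetilde K[s].
\end{eqnarray*}
Since $\theta_\tau\omega(s)=\omega(s+\tau)-\omega(\tau)$, one has $(\theta_\tau\omega)_{(T_2-\tau)-}(s)=\omega(s+\tau)-\omega(T_2-)=\omega_{T_2-}(s+\tau)$, and the analogous substitution in the right-sided derivative yields
\begin{eqnarray*}
D_{T_2-}^{1-\alpha}\omega_{T_2-}[s+\tau]=D_{(T_2-\tau)-}^{1-\alpha}(\theta_\tau\omega)_{(T_2-\tau)-}[s].
\end{eqnarray*}
Substituting these two identities into the shifted integrand and using \eqref{eq3} in the form appropriate to the interval $[T_1-\tau,T_2-\tau]$ with integrator $\theta_\tau\omega$ and integrand $\widetilde K$, we obtain the claimed identity. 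Both fractional derivatives appearing after the shift satisfy the same integrability conditions as in the first part (the $\beta$--H\"older regularity of $K$ on $[T_1-\tau,T_2]$ transfers to $\widetilde K$ on $[T_1-\tau,T_2-\tau]$, and $\theta_\tau\omega$ inherits the $\beta''$--H\"older regularity from $\omega$ on compact subintervals), so the right hand side is well defined by part one.

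The routine aspects are the two H\"older--type estimates of the Weyl derivatives; the only point that needs care is to keep track of the lower limit of integration when performing the shift so that the derivative $D_{T_1+}^\alpha$ transforms into $D_{(T_1-\tau)+}^\alpha$ (and not some truncated version), and correspondingly to verify that the left limits $\omega(T_2-)$ and $(\theta_\tau\omega)((T_2-\tau)-)$ coincide after subtracting $\omega(\tau)$. This is the step where one must be most attentive, but it is purely a bookkeeping argument and reduces to the identities displayed above.
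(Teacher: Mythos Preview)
Your proposal is correct and follows the natural approach: bounding the two Weyl derivatives via the H\"older regularity of $K$ and $\omega$ (using $\alpha<\beta$ and $1-\alpha<\beta'$) to obtain integrability, and then verifying the shift identity by the substitution $r\mapsto r+\tau$ inside both fractional derivatives. The paper itself does not give a proof but defers to \cite{ChGGSch12}; your argument is precisely the standard one underlying that reference, so there is nothing to add.
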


\section{Evolution equations driven by an integral with H{\"o}lder continuous integrator}

We now consider the following evolution equation on $[0,T]$:
\begin{eqnarray}\label{eq6}
    du=Au\,dt+F(u)\,dt+G(u)\,d\omega,\qquad u(0)=u_0\in V
\end{eqnarray}
driven by a H{\"o}lder continuous path $\omega$. The integral with respect to $d\omega$ is interpreted in the sense of the previous section.

In what follows we describe the different terms on the right hand side of (\ref{eq6}). Let $-A$ be a strictly positive and symmetric operator
with a compact inverse which  is the {\em generator} of an {\em analytic exponential decreasing  semigroup} $S$ on $V$. We introduce the spaces $V_\delta:=D((-A)^\delta)$ with norm $|\cdot|_{V_\delta}$ for $\delta\ge 0$ such that $V=V_0$. The spaces $V_\delta,\,\delta>0$ are supposed to be compactly embedded in $V$. From now on, assume that $(e_i)_{i\in N}$ is the complete orthonormal base generated by the eigenelements of $-A$ with associated eigenvalues $(\lambda_i)_{i\in N}$.

Let $L(V_\delta,V_\gamma)$ denote the space of continuous linear operators from $V_\delta$ into $V_\gamma$.
Then there exists a constant $c>0$ such that we have the estimates
\begin{eqnarray}
  |S(t)|_{L(V_\alpha, V_{\gamma})}=|(-A)^\gamma S(t)|_{L(V_\alpha,V)}\le
  \frac{c}{t^{\gamma-\alpha}}e^{-\lambda_1 t}\quad {\rm for }\
  \gamma\geq \alpha,\quad t\in (0,T]\label{eq1}.
  \end{eqnarray}
  \begin{eqnarray}
 |S(t)-{\rm id}|_{L(V_{\sigma+\mu},V_{\theta+\mu})} &\le c
t^{\sigma-\theta}, \quad {\rm for }\ \theta\geq 0,\quad \sigma\in
(\theta,1+\theta],\quad \mu\in R \label{eq2},
\end{eqnarray}
see \cite{chueshov}, Chapter 2. In particular, formula (1.14) on page 83 of that book becomes (\ref{eq1}) above in bounded intervals, where we shall use it.


From these inequalities, for $\mu,\,\eta\in (0,1]$ and $0\le \delta<\gamma+\mu$ there exists a $c>0$ such that for $0\leq q\leq r\leq s\leq t$, we can derive that
\begin{eqnarray*}
& & \qquad |S(t-r)-S(t-q)|_{L(V_{\delta},V_{\gamma})}\le c(r-q)^\mu(t-r)^{-\mu-\gamma+\delta}, \\[-1.5ex]
\\[-1.5ex]
& & \qquad |S(t-r)- S(s-r)-S(t-q)+S(s-q)|_{L(V)}\leq
c(t-s)^{\mu}(r-q)^{\eta}(s-r)^{-(\mu+\eta)}.\nonumber
\end{eqnarray*}

Furthermore, the mapping $F:V\to V$ is supposed to be Lipschitz continuous, but we are going to assume that $F\equiv 0$, simplification that we make for the sake of brevity since the $dt$-nonlinearity is not the interesting term in the problem (\ref{eq6}), and of course that we would achieve the same existence results as we obtain below assuming that $F$ were Lipschitz. However, for the existence of an attractor we also would need to assume for $F$ to have a sufficiently small Lipschitz constant.

The non-linear operator $G: V\to L_2(V)$ is assumed to be a twice continuously Fr\'echet--differentiable operator with bounded first and second
derivatives. Let us denote, respectively, by $c_{DG},\, c_{D^2G}$ the bounds for these derivatives and set $c_G=\|G(0)\|_{L^2(V)}$.
Then, for $u_1,\,u_2,\,v_1,\,v_2\in V$, we have
\begin{eqnarray*}
  &&  \|G(u_1)-G(v_1)\|_{L_2(V)}\le c_{DG}|u_1-v_1|,\\
   &&  \|G(u_1)\|_{L_2(V)}\le c_G+c_{DG}|u_1|,\\
  &&  \|G(u_1)-G(v_1)-(G(u_2)-G(v_2))\|_{L_2(V)}\\
   && \quad \le c_{DG}|u_1-v_1-(u_2-v_2)|+c_{D^2G} |u_1-u_2|(|u_1-v_1|+|u_2-v_2|)
\end{eqnarray*}
(the proof of the last inequality can be found in \cite{MasNua03}).

We are interested in solving the equation (\ref{eq6}) in a mild sense, which means that for $t\in [0,T]$ we have to solve
\begin{eqnarray}\label{eq7}
    u(t)=S(t)u_0+\int_0^tS(t-r)G(u(r))d\omega.
\end{eqnarray}

According to the definition of the stochastic integral given in the previous section, we need to estimate the fractional derivative of the term $S(t-\cdot)G(u(\cdot))$.
\begin{lemma}\label{l18} (see \cite{ChGGSch12})
Let $S$ be the semigroup generated by $A$, assume that $G$ is Lipschitz and $u\in C^{\beta,\sim}([T_1,T_2];V)$.
Then for the orthonormal base $(e_i)_{i\in N}$ of $V$, for any $i,\,j\in N$,
\begin{eqnarray*}
     (e_i,S(t-\cdot)G(u(\cdot))e_j) \in I_{T_1+}^\alpha(L_p((T_1,T_2);R))
\end{eqnarray*}
if $\alpha p<1$. In addition, the mapping $r\mapsto D_{T_1+}^\alpha S(t-\cdot)G(u(\cdot))[r]$ is measurable on $[T_1,t]$ for $t\le T_2$ and satisfies the estimate
\begin{eqnarray*}
  ||D_{T_1+}^\alpha S(t-\cdot)G(u(\cdot))[r]||_{L_2(V)}\le c (1+\|u\|_{\beta,\sim})(r-T_1)^{-\alpha}\bigg(1+\frac{(r-T_1)^\beta}{(t-r)^\beta}\bigg).
\end{eqnarray*}
Similarly, assuming that $u\in C^{\beta}([T_1,T_2];V)$, then
\begin{eqnarray*}
  ||D_{T_1+}^\alpha S(t-\cdot)G(u(\cdot))[r]||_{L_2(V)}\le c (1+\|u\|_{\beta})(r-T_1)^{-\alpha}\bigg(1+(r-T_1)^\beta+\frac{(r-T_1)^\beta}{(t-r)^\beta}\bigg).
\end{eqnarray*}
In addition, $\zeta_{iT_2-}=(\omega_{T_2-}(t),e_i)\in I_{T_2-}^{1-\alpha}(L_{p^\prime}((T_1,T_2);R))$ for any $p^\prime>1$ and $\omega\in C^{\beta^\prime}([T_1,T_2]; V) $ for $\beta^\prime>1-\alpha$, and
$$|D_{T_2-}^{1-\alpha} \omega_{T_2-}[r]| \leq c |||\omega|||_{\beta^\prime,T_1,T_2} (T_2-r)^{\beta^\prime+\alpha-1}.$$
\end{lemma}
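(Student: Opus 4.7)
The plan is to unpack the definition of the Weyl derivative
$$D_{T_1+}^\alpha\bigl[S(t-\cdot)G(u(\cdot))\bigr][r]=\frac{1}{\Gamma(1-\alpha)}\biggl(\frac{S(t-r)G(u(r))}{(r-T_1)^\alpha}+\alpha\int_{T_1}^{r}\frac{S(t-r)G(u(r))-S(t-q)G(u(q))}{(r-q)^{1+\alpha}}dq\biggr)$$
and estimate the two contributions separately in the $L_2(V)$--norm. For the pointwise term I would use that $|S(t-r)|_{L(V)}\le c$ on bounded intervals, together with the sub-linearity $\|G(u(r))\|_{L_2(V)}\le c_G+c_{DG}|u(r)|$ and the fact that the supremum part of $\|u\|_{\beta,\sim}$ already controls $|u(r)|$; this gives a contribution of order $c(1+\|u\|_{\beta,\sim})(r-T_1)^{-\alpha}$. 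The integrability of the $(L_2(V),p)$--norm, i.e.\ the claim $k_{ji}\in I_{T_1+}^\alpha(L_p((T_1,T_2);R))$ under $\alpha p<1$, will drop out of this same bound.

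For the integral term I would split the increment in the usual way as
$$S(t-r)G(u(r))-S(t-q)G(u(q))=\bigl(S(t-r)-S(t-q)\bigr)G(u(r))+S(t-q)\bigl(G(u(r))-G(u(q))\bigr).$$
On the first summand apply the H\"older bound for the semigroup (the analytic estimate stated right after \eqref{eq2} with $\delta=\gamma=0$, $\mu=\beta$) to obtain $c(r-q)^\beta(t-r)^{-\beta}\|G(u(r))\|_{L_2(V)}$. On the second summand use the Lipschitz property of $G$ together with the defining estimate $|u(r)-u(q)|\le \|u\|_{\beta,\sim}(q-T_1)^{-\beta}(r-q)^\beta$ coming from the norm \eqref{eq38}. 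The resulting integrand has a $(r-q)^{\beta-\alpha-1}$ singularity at $q=r$ (harmless because $\beta>\alpha$) and, in the $C^{\beta,\sim}$ case, a further $(q-T_1)^{-\beta}$ singularity at the left endpoint.

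The key step, and the only slightly delicate calculation, is integrating the second piece in $q$. This produces a Beta-function integral
$$\int_{T_1}^{r}(r-q)^{\beta-\alpha-1}(q-T_1)^{-\beta}dq=B(\beta-\alpha,1-\beta)\,(r-T_1)^{-\alpha},$$
which is finite precisely because $0<\beta<1$ and $\beta>\alpha$; this explains the first factor $(r-T_1)^{-\alpha}$ in the claimed bound. The first piece integrates to $c(r-T_1)^{\beta-\alpha}(t-r)^{-\beta}=c(r-T_1)^{-\alpha}\frac{(r-T_1)^\beta}{(t-r)^\beta}$, giving the second factor. Summing the three contributions yields exactly the stated estimate. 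For the $C^\beta$ version one simply replaces the singular weight $(q-T_1)^{-\beta}$ by $1$, so the second integral becomes $c(r-T_1)^{\beta-\alpha}$, producing the extra additive $(r-T_1)^\beta$ term inside the parentheses. Measurability of $r\mapsto D_{T_1+}^\alpha[\,\cdots\,][r]$ is then inherited from continuity of $u$, strong continuity of $S$ on $(0,T]$ and Pettis' theorem.

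The statement for $\omega$ is more direct: in the definition of $D_{T_2-}^{1-\alpha}\omega_{T_2-}[r]$, the boundary term is bounded by $|||\omega|||_{\beta',T_1,T_2}(T_2-r)^{\beta'+\alpha-1}$ using H\"older continuity, and the integral term by
$$|||\omega|||_{\beta',T_1,T_2}\int_{r}^{T_2}(q-r)^{\beta'+\alpha-2}dq=\frac{|||\omega|||_{\beta',T_1,T_2}}{\beta'+\alpha-1}(T_2-r)^{\beta'+\alpha-1},$$
which converges thanks to the assumption $\beta'>1-\alpha$ and at the same time certifies that $\zeta_{iT_2-}\in I_{T_2-}^{1-\alpha}(L_{p'}((T_1,T_2);R))$ for every $p'>1$. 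The main obstacle in the whole argument is the first of the two Beta-type integrals above: it is what fixes the compatibility $1-\beta'<\alpha<\beta$ between the H\"older indices and the fractional order, and what dictates the choice of the weighted norm $\|\cdot\|_{\beta,\sim}$ as the natural phase space.
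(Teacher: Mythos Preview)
Your argument is correct and follows exactly the standard route: expand the Weyl derivative, split the increment as $(S(t-r)-S(t-q))G(u(r))+S(t-q)(G(u(r))-G(u(q)))$, apply the analytic semigroup estimate to the first piece and the Lipschitz/H\"older bounds to the second, and evaluate the two resulting Beta-type integrals. The paper does not actually prove this lemma---it is quoted from \cite{ChGGSch12}---but the computation you give is precisely the one used there, and it is the same three-term splitting the present paper employs in its proof of Lemma~\ref{l11}.
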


In the following result we remind the existence theorem and comment briefly why we have considered two different phase spaces.

\begin{theorem} (see \cite{ChGGSch12}) \label{t11}
If $u_0\in V$, then for every $T>0$ the equation (\ref{eq7}) has a unique solution $u$ in $C^{\beta,\sim}([0,T];V)$.\\
If $u_0\in V_\beta$, then for every $T>0$ the equation (\ref{eq7}) has a unique solution $u$ in $C^{\beta}([0,T];V)$.
\end{theorem}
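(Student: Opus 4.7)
The plan is to apply the Banach fixed point theorem to the operator
$$\tT(u)(t) := S(t)u_0 + \int_0^t S(t-r)G(u(r))\,d\omega$$
on the appropriate phase space, equipped with one of the equivalent norms $\|\cdot\|_{\beta,\rho,\sim}$ from (\ref{eq38}), where the parameter $\rho$ will be taken large enough to force a contraction.

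First I would verify that the free term $t\mapsto S(t)u_0$ sits in the correct space. For $u_0\in V_\beta$, estimate (\ref{eq2}) with $\sigma=\beta$, $\theta=0$ yields
$$|S(t)u_0 - S(s)u_0| = |(S(t-s)-\mathrm{id})S(s)u_0| \le c(t-s)^\beta |u_0|_{V_\beta},$$
so $S(\cdot)u_0\in C^\beta([0,T];V)$. For $u_0\in V$, the smoothing estimate (\ref{eq1}) gives $|S(s)u_0|_{V_\beta}\le c\,s^{-\beta}|u_0|$, and combined with (\ref{eq2}) this yields $s^\beta|S(t)u_0-S(s)u_0|\le c(t-s)^\beta |u_0|$, so that $S(\cdot)u_0\in C^{\beta,\sim}([0,T];V)$. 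The weighted seminorm in the definition of $C^{\beta,\sim}$ is precisely designed to absorb this initial singularity.

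Next I would show that $\tT$ maps the respective space into itself and estimate the convolution term. Fixing $\alpha$ with $1-\beta^\prime<\alpha<\beta$, Lemma \ref{l18} provides
$$\|D_{0+}^\alpha S(t-\cdot)G(u(\cdot))[r]\|_{L_2(V)} \le c(1+\|u\|_{\beta,\sim})\,r^{-\alpha}\Bigl(1+\frac{r^\beta}{(t-r)^\beta}\Bigr)$$
(and the non-weighted analogue for $C^\beta$), together with $|D_{t-}^{1-\alpha}\omega_{t-}[r]|\le c\,|||\omega|||_{\beta^\prime,0,t}\,(t-r)^{\beta^\prime+\alpha-1}$. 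Plugging into (\ref{eq3}) and using the Beta-type integral $\int_0^t r^{-\alpha}(t-r)^{\beta^\prime+\alpha-1-\beta}\,dr < \infty$ (since $\alpha<\beta<\beta^\prime+\alpha$) produces both a uniform bound of $|\tT(u)(t)|$ and, after splitting the increment $\tT(u)(t)-\tT(u)(s)$ into $\int_s^t S(t-r)G(u)\,d\omega$ and $(S(t-s)-\mathrm{id})\int_0^s S(s-r)G(u)\,d\omega$ and applying the same ideas plus (\ref{eq2}), the corresponding (weighted) H\"older seminorm bound.

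For the contraction step I would repeat these estimates with $u$ replaced by $u_1-u_2$ and use the Lipschitz bound $\|G(u_1)-G(u_2)\|_{L_2(V)}\le c_{DG}|u_1-u_2|$ along with the difference inequality for $G(u_1)-G(v_1)-(G(u_2)-G(v_2))$ stated in Section~3. Crucially, the $r$-dependence of the bounds is of the form $r^{-\alpha}(1+\cdots)\,|u_1(r)-u_2(r)|$, which after inserting the exponential factor $e^{-\rho(t-r)}$ from the equivalent norm (\ref{eq38}) gives inner integrals of the form $\int_0^t e^{-\rho(t-r)} r^{-\alpha}(t-r)^{\beta^\prime+\alpha-1-\beta}\,dr$. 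These can be made arbitrarily small, uniformly in $t\in[0,T]$, by choosing $\rho$ sufficiently large. This yields a contraction constant strictly less than~$1$ in $\|\cdot\|_{\beta,\rho,\sim}$, and the Banach fixed point theorem produces a unique fixed point which, by equivalence of norms on $[0,T]$, belongs to $C^{\beta,\sim}([0,T];V)$ (resp.\ $C^\beta([0,T];V)$).

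The main obstacle is the bookkeeping of the competing singularities: the fractional-derivative integrand in (\ref{eq3}) is singular at both endpoints (like $r^{-\alpha}(t-r)^{\beta^\prime+\alpha-1}$), and one additionally carries the weight $s^\beta$ from the $C^{\beta,\sim}$-norm and must combine it with the smoothing estimates (\ref{eq1})--(\ref{eq2}) when passing from $|u_1-u_2|(r)$ to $\|u_1-u_2\|_{\beta,\rho,\sim}$. The choice $1-\beta^\prime<\alpha<\beta$ keeps every Beta-type integral convergent, and exponential weighting then closes the argument.
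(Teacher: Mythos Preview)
Your proposal is correct and follows essentially the same approach as the paper. The paper itself does not give a full proof of Theorem~\ref{t11} but only a commentary, deferring the details to \cite{ChGGSch12}; that commentary already identifies exactly the ingredients you use: the Banach fixed point theorem, the reason why $C^{\beta,\sim}$ is needed (namely that $t\mapsto S(t)u_0$ lies in $C^{\beta,\sim}$ but not in $C^\beta$ when $u_0\in V$), and the use of the equivalent $\rho$-weighted norm (\ref{eq38}) with $\rho$ adapted to the data so that the right hand side of (\ref{eq7}) becomes a contraction. Your sketch fills in the mechanics behind the paper's outline in a manner consistent with Lemma~\ref{l18} and the estimates (\ref{eq36})--(\ref{eq37}).
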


Let us comment these results.
Existence of this kind of equations has been investigated in \cite{MasNua03} and \cite{GLS09} when considering as integrators of the stochastic integrals a fractional Brownian motion with Hurst parameter in $(1/2,1)$ and certain phase spaces not as natural as the space of H\"older continuous functions. However, in this article we present the existence theory in other function spaces, namely the space of H{\"o}lder continuous solutions for appropriate exponents. In fact, in \cite{ChGGSch12}, assuming that $1/2<\beta<\beta^\prime<H$ and $1-\beta^\prime<\alpha<\beta$, the existence of a unique pathwise mild solution has been obtained by applying the Banach fixed point theorem.

The main reason to consider the space $C^{\beta,\sim}([0,T];V)$ is that $t\mapsto S(t)u_0$ is not a $\beta$--H{\"o}lder-continuous function
but an element of that space. However, if $u_0\in V_\beta$ then $t\mapsto S(t)u_0$ is an element of $C^{\beta}([0,T];V)$.
Considering the equivalent norm from (\ref{eq38}) on  $C^{\beta,\sim}([0,T];V)$, we are able to adapt $\rho$ to the data of our problem (in particular to $T$) such that the right hand side of (\ref{eq7}) forms an operator which satisfies the conditions of the Banach fixed point theorem if $u_0\in V$. In particular, see  \cite{ChGGSch12}, we obtain estimates of the integral like
\begin{eqnarray}
\left\|\int_0^\cdot S(\cdot-r)G(u(r))d\omega\right\|_{\beta,\sim,0,T}&\le& c_T|||\omega|||_{\beta^\prime,0,T}(1+\|u\|_{\beta,\sim,0,T}), \label{eq36}\\
  \left\|\int_0^\cdot S(\cdot-r)G(u(r))d\omega\right\|_{\beta,0,T} &\le & c_T|||\omega|||_{\beta^\prime,0,T}(1+\|u\|_{\beta,0,T})\label{eq37}.
\end{eqnarray}
Assuming that $u_0\in V$ the corresponding solution $u$ of (\ref{eq7}) satisfies $u(T)\in V_\beta$ for every $T>0$, due to
\begin{eqnarray}\label{eq31}
|u(T)|_{V_\beta}\leq cT^{-\beta}|u_0|+c_T |||\omega|||_{\beta^\prime,0,T}(1+\|u\|_{\beta,\sim,0,T})<\infty.
\end{eqnarray}
The constant $c_T$ in the above formulas depends on the operators $S,\,G$ and is bounded when $T$ is bounded. $c_T$ can be chosen independently of $\omega$.\\

In addition to the previous estimates we also have the following one.
\begin{lemma}\label{l11}
For $1-\beta^\prime<\alpha<\beta$ and $u\in C^\beta([0,T];V)$ we have
\begin{eqnarray*}
  \left|\int_0^T S(T-r)G(u(r))d\omega\right|_{V_\beta} &\le & c_T|||\omega|||_{\beta^\prime,0,T}(1+\|u\|_{\beta,0,T}).
\end{eqnarray*}
\end{lemma}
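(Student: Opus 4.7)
The plan is to mimic the proof of Lemma \ref{l18}, but with the integrand $(-A)^\beta S(T-r)G(u(r))$ in place of $S(T-r)G(u(r))$. The passage from $V$ to $V_\beta$ produces an extra $(T-r)^{-\beta}$ singularity that has to be reabsorbed by the factor $(T-r)^{\beta^\prime+\alpha-1}$ coming from the Weyl derivative of $\omega$, and the room to do this is exactly the gap $\beta^\prime-\beta>0$.

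First, since $(-A)^\beta$ is closed and commutes with the semigroup, I would reduce matters to estimating the $V$-norm of
\[
\int_0^T (-A)^\beta S(T-r)G(u(r))\,d\omega = (-1)^\alpha\int_0^T D_{0+}^\alpha K[r]\,D_{T-}^{1-\alpha}\omega_{T-}[r]\,dr,
\]
with $K(r)=(-A)^\beta S(T-r)G(u(r))$, and use the general bound by $\int_0^T\|D_{0+}^\alpha K[r]\|_{L_2(V)}|D_{T-}^{1-\alpha}\omega_{T-}[r]|\,dr$. For the second factor Lemma \ref{l18} already supplies $|D_{T-}^{1-\alpha}\omega_{T-}[r]|\le c|||\omega|||_{\beta^\prime,0,T}(T-r)^{\beta^\prime+\alpha-1}$, so all the work is in bounding the first factor.

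Next I would establish pointwise bounds on $K$. From (\ref{eq1}) with the linear growth of $G$, $\|K(r)\|_{L_2(V)}\le c(T-r)^{-\beta}(1+\|u\|_{\beta,0,T})$. For the H\"older increment I split
\[
K(r)-K(q)=(-A)^\beta S(T-r)\bigl[G(u(r))-G(u(q))\bigr]+\bigl[(-A)^\beta S(T-r)-(-A)^\beta S(T-q)\bigr]G(u(q)).
\]
The first term is controlled by $c(T-r)^{-\beta}\|u\|_{\beta,0,T}(r-q)^\beta$ via Lipschitzness of $G$. The second requires the estimate after (\ref{eq2}) with $\delta=0$, $\gamma=\beta$ and some $\mu\in(0,1]$, which yields $|(-A)^\beta(S(T-r)-S(T-q))|_{L(V)}\le c(r-q)^\mu(T-r)^{-\mu-\beta}$ and hence a bound $c(1+\|u\|_{\beta,0,T})(r-q)^\mu(T-r)^{-\mu-\beta}$. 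Plugging into the definition of $D_{0+}^\alpha$ and performing the elementary $q$-integrations (which converge provided $\mu>\alpha$, $\beta>\alpha$, $\alpha<1$), I obtain
\[
\|D_{0+}^\alpha K[r]\|_{L_2(V)}\le c(1+\|u\|_{\beta,0,T})\Bigl[(T-r)^{-\beta}\bigl(r^{-\alpha}+r^{\beta-\alpha}\bigr)+(T-r)^{-\mu-\beta}r^{\mu-\alpha}\Bigr].
\]

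Finally, multiplying by $(T-r)^{\beta^\prime+\alpha-1}$ and integrating in $r$, the singularities at $r=T$ have orders $\beta^\prime+\alpha-\beta-1$ and $\beta^\prime+\alpha-\beta-\mu-1$. The first is integrable because $\beta^\prime+\alpha-\beta>0$; the second is integrable iff $\mu<\beta^\prime+\alpha-\beta$. Together with $\mu>\alpha$ this forces the choice $\mu\in(\alpha,\beta^\prime+\alpha-\beta)$, an interval that is non-empty precisely because $\beta<\beta^\prime$. Fixing any such $\mu$, the remaining integral in $r$ is finite and bounded by a constant $c_T$ depending only on $S,G,T$ and the fixed exponents, yielding the claim. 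The main obstacle is the additional $(T-r)^{-\mu-\beta}$ singularity produced by the Hölder difference of $(-A)^\beta S$; the key observation that makes everything fit together is that the gap $\beta^\prime-\beta>0$ leaves enough room to pick $\mu$ satisfying both the inner ($\mu>\alpha$) and outer ($\mu<\beta^\prime+\alpha-\beta$) integrability constraints simultaneously.
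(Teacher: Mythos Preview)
Your proof is correct and follows essentially the same route as the paper: both pull $(-A)^\beta$ inside, split the H\"older increment of the integrand into a $G$-difference piece and a semigroup-difference piece, and introduce an auxiliary exponent (your $\mu$, the paper's $\alpha'$) satisfying $\alpha<\mu<\alpha+\beta'-\beta$ so that both the inner $q$-integral and the outer $r$-integral converge. The only cosmetic difference is the order of the telescoping (you write $S(T-r)[G(u(r))-G(u(q))]+[S(T-r)-S(T-q)]G(u(q))$, the paper swaps the roles of $r$ and $q$ in the two factors), which is immaterial.
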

\begin{proof} Take $\alpha^\prime>\alpha$ such that $\alpha^\prime+\beta<\alpha+\beta^\prime$, then the following inequalities hold
\begin{eqnarray*}
  &&\bigg|(-A)^\beta\int_0^T S(T-r)G(u(r))d\omega\bigg| \\
  &\le& \frac{1}{\Gamma(1-\alpha)}\int_0^T\bigg(\frac{|S(T-r)(-A)^\beta G(u(r))|}{r^\alpha} \\
   &+& \alpha \int_0^r\frac{|(S(T-r)-S(t-q))(-A)^\beta G(u(r))|}{(r-q)^{1+\alpha}}dq\\
   &+&\alpha \int_0^r\frac{|S(T-q)(-A)^\beta (G(u(r))-G(u(q)))|}{(r-q)^{1+\alpha}}dq\bigg)|||\omega|||_{\beta^\prime,0,T}(T-r)^{\alpha+\beta^\prime-1}dr \\
  &\le& c|||\omega|||_{\beta^\prime,0,T}(1+||u||_{\beta,0,T})\int_0^T\bigg(\frac{1}{r^\alpha(T-r)^\beta}\\
  &+&\int_0^r\frac{(r-q)^{\alpha^\prime}}{(T-r)^{\beta+\alpha^\prime}(r-q)^{1+\alpha}}dq+
  \int_0^r\frac{(r-q)^\beta}{(T-r)^\beta(r-q)^{1+\alpha}}dq\bigg)(T-r)^{\alpha+\beta^\prime-1}dr \\
  &\le & c_T |||\omega|||_{\beta^\prime,0,T}(1+||u||_{\beta,0,T})  \end{eqnarray*}
where the last inequality is true due to the choice made for the parameters. \quad \end{proof}

\subsection{Non--autonomous dynamical systems of (\ref{eq7})}\label{N-Ads}\label{s3}

According to Lemma \ref{l3} and Lemma \ref{l18}, for $t,\tau\in R^+$ yields
\begin{eqnarray*}
    \int_{\tau}^{t+\tau} S(t+\tau-r)G(u(r)) d\omega(r)=\int_{0}^{t} S(t-r)G(u(r+\tau)) d\theta_\tau \omega(r),
\end{eqnarray*}
where $S$ and $G$ satisfy the corresponding conditions of Lemma \ref{l18}. From this formula it follows easily that the solution of (\ref{eq6}) generates a non-autonomous dynamical system $\phi$ on $R^+$ with state space $V$. However, later on from $\phi$ we shall derive another non-autonomous dynamical system but with discrete time set $Z^+$.

Next we introduce some stopping times which are generated by elements of $\Omega$. These stopping times are needed to keep $|||\omega|||_{\beta^{\prime}}$ small, which is necessary to obtain appropriate a priori estimates for the solution. \\

In a first step, fix $\mu>0$ and define the stopping times as follows
\begin{eqnarray}
T(\omega)&=&\inf\{\tau>0:|||\omega|||_{\beta^\prime,0,\tau}+\mu\tau^{1-\beta^\prime}\ge\mu\},\nonumber \\[-1.5ex]
\label{eq29}\\[-1.5ex]
\hat T(\omega)&=&\sup\{\tau<0:|||\omega|||_{\beta^\prime,\tau,0}+\mu|\tau|^{1-\beta^\prime}\ge\mu\}.\nonumber
\end{eqnarray}

\begin{lemma}\label{l7}
For $\omega\in\Omega$ we have that $T(\omega),\,-\hat T(\omega)$ are in $(0,1]$. In addition, $T(\omega)=-\hat T(\theta_{T(\omega)}\omega)$ and $\hat T(\omega)=-T(\theta_{\hat T(\omega)}\omega)$.
\end{lemma}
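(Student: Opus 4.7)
The plan is to first establish the bounds $T(\omega), -\hat T(\omega) \in (0,1]$ by elementary arguments, and then prove the two reflection identities, which are symmetric to one another.

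For the upper bound, at $\tau = 1$ the term $\mu\tau^{1-\beta^\prime}$ already equals $\mu$, so the threshold condition in the definition of $T(\omega)$ is trivially satisfied, forcing $T(\omega) \le 1$; the argument for $-\hat T(\omega) \le 1$ (evaluated at $\tau = -1$) is identical. For strict positivity, I use that paths in $\Omega$ are $\beta^{\prime\prime}$-H\"older on compact subintervals with $\beta^{\prime\prime} > \beta^\prime$, which yields $|||\omega|||_{\beta^\prime,0,\tau} \le \tau^{\beta^{\prime\prime}-\beta^\prime}|||\omega|||_{\beta^{\prime\prime},0,1}$. Combined with $\mu\tau^{1-\beta^\prime} \to 0$ as $\tau \to 0^+$, this drives the defining expression strictly below $\mu$ for small $\tau > 0$, giving $T(\omega) > 0$; the argument for $\hat T(\omega) < 0$ is analogous.

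The core of the proof is the reflection identity $T(\omega) = -\hat T(\theta_{T(\omega)}\omega)$. Setting $g(\tau) := |||\omega|||_{\beta^\prime,0,\tau} + \mu\tau^{1-\beta^\prime}$, the key intermediate fact is the exact equality $g(T(\omega)) = \mu$. This requires continuity of $g$: the power term is smooth; left-continuity of the seminorm part is routine from the continuity of $\omega$; and right-continuity is where the strict inequality $\beta^{\prime\prime} > \beta^\prime$ matters, since any maximizing sequence $(s_n, t_n)$ with $t_n - s_n \to 0$ satisfies $|\omega(t_n)-\omega(s_n)|/(t_n-s_n)^{\beta^\prime} \le |||\omega|||_{\beta^{\prime\prime}}(t_n-s_n)^{\beta^{\prime\prime}-\beta^\prime} \to 0$, ruling out a jump. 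With $g(T(\omega)) = \mu$ in hand, the definition of $\hat T$ applied to $\theta_T\omega$ becomes tractable: from $\theta_T\omega(s_2) - \theta_T\omega(s_1) = \omega(T+s_2) - \omega(T+s_1)$ and the change of variables $u_i = T + s_i$ one obtains $|||\theta_T\omega|||_{\beta^\prime,\tau,0} = |||\omega|||_{\beta^\prime, T+\tau, T}$ for $\tau < 0$. Setting $\sigma = -\tau$, the identity reduces to $\inf\{\sigma > 0 : |||\omega|||_{\beta^\prime, T-\sigma, T} + \mu\sigma^{1-\beta^\prime} \ge \mu\} = T$. At $\sigma = T$ the left side equals $g(T) = \mu$, giving the bound ``$\le T$''; for $0 < \sigma < T$, the inclusion $[T-\sigma, T] \subset [0,T]$ gives the crucial monotonicity bound $|||\omega|||_{\beta^\prime, T-\sigma, T} \le |||\omega|||_{\beta^\prime, 0, T} = \mu - \mu T^{1-\beta^\prime}$, so the left side is at most $\mu - \mu T^{1-\beta^\prime} + \mu\sigma^{1-\beta^\prime} < \mu$, giving ``$\ge T$''. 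The mirror identity $\hat T(\omega) = -T(\theta_{\hat T(\omega)}\omega)$ follows by the analogous computation using $h(\hat T(\omega)) = \mu$, where $h(\tau) := |||\omega|||_{\beta^\prime, \tau, 0} + \mu|\tau|^{1-\beta^\prime}$.

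The main obstacle is precisely the equality $g(T(\omega)) = \mu$: the defining infimum alone only yields $g(T(\omega)) \ge \mu$, and the strict inequality required for $\sigma < T$ in the second step is only available because the slack $\mu - \mu T^{1-\beta^\prime}$ has been exactly absorbed by $|||\omega|||_{\beta^\prime,0,T}$. This is why the extra regularity $\beta^{\prime\prime} > \beta^\prime$ is built into the path space $\Omega$, rather than merely $\beta^\prime$-H\"older continuity.
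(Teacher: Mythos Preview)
Your proof is correct and follows essentially the same approach as the paper: both establish the bounds $T(\omega),-\hat T(\omega)\in(0,1]$ by the obvious observation at $\tau=1$ together with $\beta^{\prime\prime}>\beta^\prime$, then use continuity of $\tau\mapsto|||\omega|||_{\beta^\prime,0,\tau}+\mu\tau^{1-\beta^\prime}$ to upgrade the defining infimum to the exact equality $g(T(\omega))=\mu$, and finally derive the reflection identity from the translation rule $|||\theta_T\omega|||_{\beta^\prime,\tau,0}=|||\omega|||_{\beta^\prime,T+\tau,T}$ combined with the strict monotonicity of the power term. The only cosmetic differences are that the paper packages the continuity step via a truncation $\omega^{\tau_0}$ (reducing to $|||\omega|||_{\beta^\prime,\tau_0,\tau}\to 0$), whereas you argue through maximizing sequences, and the paper phrases the final step as ``strict monotonicity forces equality'' while you compute the infimum directly; the underlying mechanism---subinterval monotonicity of the seminorm plus strict monotonicity of $\sigma\mapsto\mu\sigma^{1-\beta^\prime}$---is identical.
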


\begin{proof}
It is easily seen that $|T(\omega)|,\,|\hat T(\omega)|\le 1$. Moreover, since $\omega$ has a finite $\beta^{\prime\prime}$--H{\"o}lder seminorm and $\beta^\prime<\beta^{\prime \prime}$ we have
  \[  \lim_{\tau\downarrow 0}|||\omega|||_{\beta^\prime,0,\tau}=0,\]
and, in addition,
\[     \lim_{\tau\to\infty}|||\omega|||_{\beta^\prime,0,\tau}+\mu \tau^{1-\beta^\prime}=\infty\]
where we suppose that this condition holds for any $\omega\in\Omega$.

Therefore, thanks to the intermediate value theorem, we only need to prove the continuity of the strictly increasing mapping $\tau \rightarrow |||\omega|||_{\beta^\prime,0,\tau}+\mu \tau^{1-\beta^\prime}$ to ensure that there is a time $\hat \tau_0$ such that $|||\omega|||_{\beta,0,\hat \tau_0}+\mu\hat \tau_0^{1-\beta^\prime}=\mu$, which means that we can replace the inequality of (\ref{eq29}) by an equality.

Fixed $\tau_0 >0$ and define $\omega^{\tau_0}$ given by
$$\omega^{\tau_0}(s)=\left\{
\begin{array}{lcl}
\omega(s)&:&{\rm for }\ s< \tau_0,\\
\omega(\tau_0)&:&{\rm for }\ s\geq \tau_0.
\end{array}\right.\\$$
Thus, for $\tau \geq \tau_0$,

\[ |||\omega|||_{\beta^\prime,0,\tau}-|||\omega|||_{\beta^\prime,0,\tau_0}
 =|||\omega|||_{\beta^\prime,0,\tau}-|||\omega^{\tau_0}|||_{\beta^\prime,0,\tau}\le |||\omega- \omega^{\tau_0}|||_{\beta^\prime,0,\tau}=|||\omega|||_{\beta^\prime,\tau_0,\tau},\]
and because
\[\lim_{\tau\downarrow \tau_0}|||\omega|||_{\beta^\prime,\tau_0,\tau}\leq     \lim_{\tau\downarrow \tau_0}|||\omega|||_{\beta^{\prime \prime},\tau_0,\tau} (\tau-\tau_0)^{\beta^{\prime \prime}-\beta^{\prime }} \leq     \lim_{\tau\downarrow \tau_0}|||\omega|||_{\beta^{\prime \prime},0,\tau} (\tau-\tau_0)^{\beta^{\prime \prime}-\beta^{\prime }} =0,\]
then the mentioned continuity property is true. Note that we could obtain a similar conclusion when taking $\tau\uparrow \tau_0$. Moreover, because $T\mapsto |||\omega|||_{\beta^\prime,0,T}+T^{1-\beta^\prime}$ is {\em strictly} increasing  and $T\mapsto |||\omega|||_{\beta^\prime,T,0}+|T|^{1-\beta^\prime}$ is {\em strictly} decreasing, it is easily seen that

\begin{eqnarray*}
   \mu= |||\omega|||_{\beta^\prime,0,T(\omega)}+\mu T(\omega)^{1-\beta^\prime}=|||\theta_{T(\omega)}\omega|||_{\beta^\prime,\hat T(\theta_{T(\omega)}\omega),0}+\mu (-\hat T(\theta_{T(\omega)}\omega))^{1-\beta^\prime}
\end{eqnarray*}
which is only possible if $T(\omega)=-\hat T(\theta_{T(\omega)}\omega)$.\qquad\end{proof}

From $T(\omega),\,\hat T(\omega)$ we derive a sequence of  stopping times. For $\omega\in \Omega$ we define
\begin{eqnarray}\label{stop}
    T_i(\omega)=\left\{\begin{array}{lcr}0&:&i=0,\\T_{i-1}(\omega)+T(\theta_{T_{i-1}(\omega)}\omega)&:&i\in N,\\
    T_{i+1}(\omega)+\hat T(\theta_{T_{i+1}(\omega)}\omega)&:&i\in -N.\end{array}\right.
\end{eqnarray}
Then $(T_i(\omega))_{i\in Z}$ satisfies the cocycle property: for $i,\,j\in Z$ we have
\[
    T_0(\omega)=0,\qquad T_i(\omega)+T_j(\theta_{T_i(\omega)}\omega)=T_{i+j}(\omega).
\]
Note that with the previous notation we are identifying $T(\omega)$ with $T_1(\omega)$ and $\hat T(\omega)=T_{-1}(\omega)$.

Thanks to the definition, the stopping times enjoy the following order property:

\begin{lemma}\label{l12}
Let $t_1\le t_2$. Then
$$
t_1+ \hat T(\theta_{t_1}\omega)\le t_2+ \hat T(\theta_{t_2}\omega).
$$
\end{lemma}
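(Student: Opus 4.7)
The plan is to introduce $s_i := t_i + \hat T(\theta_{t_i}\omega)$ for $i=1,2$ and to prove directly that $s_1 \le s_2$. My first step is to express the defining condition of $\hat T$ on the shifted path $\theta_{t_i}\omega$ back in terms of $\omega$ itself. Because $\theta_{t_i}\omega(r) - \theta_{t_i}\omega(\tau) = \omega(t_i+r) - \omega(t_i+\tau)$, the H\"older seminorm is translation invariant, and thus $|||\theta_{t_i}\omega|||_{\beta',\tau,0} = |||\omega|||_{\beta', t_i+\tau, t_i}$ for $\tau \le 0$. Using the equality form of \eqref{eq29} (valid by the same continuity and strict monotonicity argument already used in the proof of Lemma \ref{l7}), each $s_i$ is characterized as the largest point $\le t_i$ satisfying
\[
|||\omega|||_{\beta', s_i, t_i} + \mu(t_i - s_i)^{1-\beta'} = \mu,
\]
with the expression on the left being strictly less than $\mu$ for any $s \in (s_i, t_i]$ replacing $s_i$.

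Next I would argue by monotonicity. Since $s_1 < t_1 \le t_2$, enlarging the upper endpoint of the interval from $t_1$ to $t_2$ can only enlarge the supremum defining $|||\omega|||_{\beta', s_1, \cdot}$, and it strictly increases the term $\mu(\cdot - s_1)^{1-\beta'}$ because $1-\beta' > 0$. Consequently
\[
|||\omega|||_{\beta', s_1, t_2} + \mu(t_2 - s_1)^{1-\beta'} \ge |||\omega|||_{\beta', s_1, t_1} + \mu(t_1 - s_1)^{1-\beta'} = \mu.
\]
Setting $\tau_0 := s_1 - t_2 < 0$ and translating back via the invariance from the first step, this reads $|||\theta_{t_2}\omega|||_{\beta',\tau_0,0} + \mu|\tau_0|^{1-\beta'} \ge \mu$. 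Therefore $\tau_0$ lies in the set whose supremum defines $\hat T(\theta_{t_2}\omega)$, so $\tau_0 \le \hat T(\theta_{t_2}\omega)$, and rearranging yields exactly $s_1 \le t_2 + \hat T(\theta_{t_2}\omega) = s_2$.

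The only step that I anticipate requiring any care is the first one: promoting the inequality in the definition of $\hat T$ to an equality at the supremum, together with the strict inequality for $\tau$ in the interior. Both facts, however, are already implicit in the continuity and strict-monotonicity arguments used in the proof of Lemma \ref{l7}, so they cost no new work. Everything else reduces to monotonicity of the H\"older seminorm under interval enlargement and monotonicity of $r \mapsto r^{1-\beta'}$.
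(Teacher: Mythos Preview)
Your proof is correct and uses the same core ingredients as the paper's argument: translating the H\"older seminorm via $|||\theta_{t_i}\omega|||_{\beta',\tau,0}=|||\omega|||_{\beta',t_i+\tau,t_i}$, invoking the equality form of the defining relation for $\hat T$ (as established in Lemma~\ref{l7}), and exploiting monotonicity of the seminorm and of $r\mapsto r^{1-\beta'}$ under interval enlargement. The paper proceeds by contradiction---assuming $s_1>s_2$ forces $[s_1,t_1]\subsetneq[s_2,t_2]$ and hence $\mu>\mu$---whereas you argue directly that $\tau_0=s_1-t_2$ belongs to the defining set for $\hat T(\theta_{t_2}\omega)$ and is therefore bounded above by its supremum. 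This direct route is slightly cleaner since it does not require the strict inequality (indeed, the ``strict inequality in the interior'' you flag as a potential concern is never actually used in your argument), but the two proofs are otherwise essentially the same.
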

\begin{proof}
Suppose that the contrary inequality holds, that means that $\hat T(\theta_{t_1}\omega)>\hat T(\theta_{t_2}\omega)$. Therefore, $(-\hat T(\theta_{t_1}\omega))^{1-\beta^\prime}<(-\hat T(\theta_{t_2}\omega))^{1-\beta^\prime}$ and moreover
\begin{eqnarray*}
    \mu&=&|||\theta_{t_2}\omega|||_{\beta^\prime,\hat T(\theta_{t_2}\omega),0}+\mu(-\hat T(\theta_{t_2}\omega))^{1-\beta^\prime}\\
   & =&\sup_{\hat T(\theta_{t_2}\omega)+t_2\le s < t\le t_2}\frac{|\omega(t)-\omega(s)|}{|t-s|^{\beta^\prime}}+\mu(-\hat T(\theta_{t_2}\omega))^{1-\beta^\prime}\\
    &>&\sup_{\hat T(\theta_{t_1}\omega)+t_1\le s < t\le t_1}\frac{|\omega(t)-\omega(s)|}{|t-s|^{\beta^\prime}}+\mu(-\hat T(\theta_{t_1}\omega))^{1-\beta^\prime}\\
    &=&|||\theta_{t_1}\omega|||_{\beta^\prime,\hat T(\theta_{t_1}\omega),0}+\mu(-\hat T(\theta_{t_1}\omega))^{1-\beta^\prime}=\mu.
\end{eqnarray*}
However, this chain of inequalities causes a contradiction.\qquad\end{proof}

\vskip.5cm

If $t_2+\hat T(\theta_{t_2}\omega) \leq t_1 \leq t_2$, then iterating the formula in the previous lemma we obtain
\begin{eqnarray}
\cdots&\le& t_1+ \hat T(\theta_{t_1}\omega)+ \hat
T(\theta_{t_1+ \hat T(\theta_{t_1}\omega)}\omega) \nonumber \\
&\le& t_2+ \hat T(\theta_{t_2}\omega)
    + \hat T(\theta_{t_2+ \hat
    T(\theta_{t_2}\omega)}\omega) \label{eq33} \\
    &   \le & t_1+ \hat T(\theta_{t_1}\omega)\le t_2+ \hat
T(\theta_{t_2}\omega)\le
 t_1\le t_2. \nonumber
\end{eqnarray}

\subsection{Global attractors for the non-autonomous dynamical systems associated to (\ref{eq7})}

As an preparation of the next key result, Lemma \ref{l9-u} below, we formulate the following Gronwall-like lemma.
\begin{lemma}\label{l10} Let  $\lambda,\,v_0,\,k_0,\,k_1<1,\,k_{2}$ positive numbers and let $(t_i)_{i\in Z^+}$ be a sequence of positive numbers, with $t_0=0$, such that
\begin{eqnarray}\label{ti}
t_{i-1}-t_{i-2}\leq -\frac{2}{\lambda}\log k_1,\quad {\rm for }\ i\geq 2.
\end{eqnarray}
Suppose that  for a sequence of positive numbers $(U_i)_{i\in N}$ the following inequalities hold true:
\begin{eqnarray}
    U_{i}&\le& k_0 v_0e^{-\lambda t_{i-1}}+\sum_{m=1}^{i-1}k_1U_me^{-\lambda(t_{i-1}-t_m)}\nonumber \\[-1.5ex]
\label{eq17}\\[-1.5ex]
    &+&\sum_{m=1}^{i-1} e^{-\lambda(t_{i-1}-t_m)} k_2 +k_2\footnotemark,\quad i=1,2,3,\cdots.\nonumber
\end{eqnarray}
\footnotetext{The sum $\sum _{m=1}^{i-1}$ is assumed to be zero for $i=1$.}
Then we have
\begin{eqnarray}
U_i &\le& (k_0v_0+k_2)(1+k_1)^{i-1}e^{-\frac{\lambda}{2} t_{i-1}} \nonumber \\[-1.5ex]
\label{eq18}\\[-1.5ex]
&+&\sum_{m=1}^{i-1} 2k_2(1+k_1)^{i-1-m}e^{-\frac{\lambda}{2}(t_{i-1}-t_{m})},\quad {\rm for }\ i=1,2,3,\cdots.\nonumber
\end{eqnarray}
\end{lemma}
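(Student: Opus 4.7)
The plan is to prove (\ref{eq18}) by induction on $i$, writing $a_i$ for the claimed bound on the right hand side of (\ref{eq18}). The base case $i=1$ is immediate since (\ref{eq17}) reduces, with the empty-sum convention, to $U_1\le k_0 v_0+k_2=a_1$. For the inductive step I would denote by $B_i$ the entire right hand side of (\ref{eq17}), so that trivially $U_i\le B_i$, and then aim to show $B_i\le a_i$ by producing matching one-step recursions for $B_i$ and for $a_i$.

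The first step is algebraic. A direct computation of $B_i-e^{-\lambda(t_{i-1}-t_{i-2})}B_{i-1}$, obtained by shifting the index in the sums that define $B_{i-1}$, shows that the $k_0v_0$ contributions cancel (since $e^{-\lambda(t_{i-1}-t_{i-2})}e^{-\lambda t_{i-2}}=e^{-\lambda t_{i-1}}$), and only the $m=i-1$ term together with a $k_2$ correction remain, yielding
\[B_i\le e^{-\lambda(t_{i-1}-t_{i-2})}\,B_{i-1}+2k_2+k_1 U_{i-1}.\]
The inductive hypothesis $U_{i-1}\le B_{i-1}$ then gives
\[B_i\le\bigl(e^{-\lambda(t_{i-1}-t_{i-2})}+k_1\bigr)B_{i-1}+2k_2.\]

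The crux, and what I expect to be the main obstacle, is the sharp pointwise estimate
\[e^{-\lambda(t_{i-1}-t_{i-2})}+k_1\le(1+k_1)\,e^{-\lambda(t_{i-1}-t_{i-2})/2},\]
which is where hypothesis (\ref{ti}) is used essentially. Indeed, setting $x:=e^{-\lambda(t_{i-1}-t_{i-2})/2}\in(0,1]$, condition (\ref{ti}) reads precisely $x\ge k_1$, and the displayed inequality is equivalent to $(1+k_1)x-(x^2+k_1)=(x-k_1)(1-x)\ge 0$. This factorization is the reason why the exponent in the conclusion must be $\lambda/2$ rather than $\lambda$.

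It then remains to verify that $a_i$ itself satisfies the matching recursion $a_i=(1+k_1)\,e^{-\lambda(t_{i-1}-t_{i-2})/2}\,a_{i-1}+2k_2$. This is a routine computation: pulling the factor $(1+k_1)e^{-\lambda(t_{i-1}-t_{i-2})/2}$ into $a_{i-1}$ transforms its initial term and its first $i-2$ source terms into the corresponding terms of $a_i$, while the missing $m=i-1$ term of $a_i$ equals $2k_2(1+k_1)^0 e^0=2k_2$. Combining this recursion with the two displays above and the inductive hypothesis $B_{i-1}\le a_{i-1}$ closes the induction:
\[U_i\le B_i\le(1+k_1)\,e^{-\lambda(t_{i-1}-t_{i-2})/2}\,a_{i-1}+2k_2=a_i.\]
Apart from the sharp inequality $(x-k_1)(1-x)\ge 0$, the remainder of the argument is bookkeeping.
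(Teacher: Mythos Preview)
Your proof is correct and follows essentially the same route as the paper's: denote the right-hand sides of (\ref{eq17}) and (\ref{eq18}) by $Z_i$ and $S_i$ respectively, derive the one-step recursion $Z_i\le e^{-\lambda(t_{i-1}-t_{i-2})}Z_{i-1}+k_1U_{i-1}+2k_2$, use $U_{i-1}\le Z_{i-1}$, apply the key inequality $k_1+e^{-x}\le(1+k_1)e^{-x/2}$ on $[0,-2\log k_1]$, and check that $S_i=(1+k_1)e^{-\lambda(t_{i-1}-t_{i-2})/2}S_{i-1}+2k_2$. One small wording slip: the bound $U_{i-1}\le B_{i-1}$ is not the inductive hypothesis but simply the assumption (\ref{eq17}) at level $i-1$; the actual inductive hypothesis, which you invoke correctly at the end, is $B_{i-1}\le a_{i-1}$.
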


{\em Proof.} First of all, note that the inequality
\begin{eqnarray}\label{exp}
k_1+e^{-x}\le (1+k_1)e^{-\frac{x}{2}}
\end{eqnarray}
holds true for $x\in [0,-2\log k_1]$.

Denote the right hand side of (\ref{eq17}) by $Z_i$ and the right hand side of (\ref{eq18}) by $S_i$. We want to prove that $U_{i}\le S_{i}$ for all $i\ge 1$, for which it is enough to prove by induction that $Z_{i}\le S_{i}$.

For $i=1$, from (\ref{eq17}) we have $U_1\leq Z_1=k_0v_0+k_2$.  From (\ref{eq18}) we have $S_1=k_0v_0+k_2$, so $Z_1=S_1$.

For $i=2$, from (\ref{eq17}) we have $Z_2=k_0v_0e^{-\lambda t_1}+k_1U_1+2k_2$. From (\ref{eq18}) we have $S_2=(k_0v_0+k_2)(1+k_1) e^{-\frac{\lambda}{2} t_{1}}+2k_2$. It suffices to take into account that from (\ref{exp}) we have
\[k_1+e^{-\lambda t_1}\le (1+k_1)e^{-\frac{\lambda}{2} t_1},\]
since by (\ref{ti}) in particular $\lambda t_{1}\leq -2\log k_1$, Therefore,
\begin{eqnarray*}Z_2&=&k_0v_0 e^{-\lambda t_1} +k_1U_1+2k_2 \leq (k_0v_0+k_2)e^{-\lambda t_1}+k_1(k_0v_0+k_2) +2k_2 \\
&\leq& (k_0v_0+k_2)(1+k_1)e^{-\frac{\lambda }{2}t_1}+2k_2=S_2.
\end{eqnarray*}
For $i\geq 3$ we apply induction. Firstly note that
\[e^{-\lambda(t_{i-1}-t_{i-2})}Z_{i-1}=Z_{i}-k_1U_{i-1}-2k_2+k_2 e^{-\lambda(t_{i-1}-t_{i-2})}\]
and thereby, assuming that $Z_{i-1}\le S_{i-1}$, thanks to (\ref{ti}) and (\ref{exp}),
\begin{eqnarray*}
  U_{i}&\le&  Z_{i} \le e^{-\lambda(t_{i-1}-t_{i-2})}Z_{i-1}+k_1U_{i-1}+2k_2\le (e^{-\lambda(t_{i-1}-t_{i-2})}+k_1)Z_{i-1}+2k_2\\
  &\le &(e^{-\lambda(t_{i-1}-t_{i-2})}+k_1)S_{i-1}+2k_2\leq (1+k_1)e^{-\frac{\lambda}{2}(t_{i-1}-t_{i-2})}S_{i-1}+2k_2=S_{i}. \quad \endproof
\end{eqnarray*}

Next we obtain an appropriate a priori estimate for the solution of our equation when assuming that the initial condition is regular, namely $u_0\in V_\beta$. Later on this a priori estimate will be the key to obtain an absorbing set, the main ingredient to ensure the existence of a pullback attractor for the system.

In what follows, $c$ shall denote a positive constant which value is unimportant and can of course change from line to line, and may depend on $S$ and $G$ but not on $\omega$.
\begin{lemma}\label{l9-u}
For $\omega\in\Omega$ let $u$ be a solution of (\ref{eq6}) where $u_0\in V_\beta$ and let $(T_i(\theta_{T_j(\omega)}\omega))_{i\in Z}$ be the sequence of stopping times defined at the beginning of this section. Then we have that
\begin{eqnarray*}
    \|u\|_{\beta,T_{i-1}(\theta_{T_j}\omega),T_{i}(\theta_{T_j}\omega)}&\le& c e^{-\lambda_1 T_{i-1}(\theta_{T_j}\omega)}|u_0|_{V_\beta}+c\mu\|u\|_{\beta,T_{i-1}(\theta_{T_j}\omega),T_{i}(\theta_{T_j}\omega)}\\
    &+&\sum_{m=1}^{i-1}c\mu e^{-\lambda_1(T_{i-1}(\theta_{T_j}\omega)-T_m(\theta_{T_j}\omega))}||u||_{\beta,T_{m-1}(\theta_{T_j}\omega),T_{m}(\theta_{T_j}\omega)}\\
    &+&\sum_{m=1}^{i-1}e^{-\lambda_1(T_{i-1}(\theta_{T_j}\omega)-T_m(\theta_{T_j}\omega))}c\mu+ c\mu
\end{eqnarray*}
where $T_j=T_j(\omega)$, and $\lambda_1$ is the smallest eigenvalue of $-A$.\\
\end{lemma}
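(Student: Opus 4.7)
Denote $\omega' := \theta_{T_j(\omega)}\omega$ and $S_m := T_m(\omega') = T_m(\theta_{T_j}\omega)$ for brevity. By the construction of the stopping times in (\ref{eq29}) together with the cocycle property and Lemma \ref{l7}, each sub-interval has length at most $1$ and satisfies the uniform control
\[
 |||\omega'|||_{\beta',S_{m-1},S_m}\,\le\,\mu,\qquad m\in\mathbb Z.
\]
The idea is to write the mild solution on the full interval $[0,S_i]$, split the stochastic integral along the stopping-time partition $0=S_0<S_1<\cdots<S_{i-1}$, estimate the $C^\beta([S_{i-1},S_i];V)$ norm of each piece using the already available integral estimates (\ref{eq37}) and Lemma \ref{l11}, and finally exploit the exponential decay of $S$ to produce the factors $e^{-\lambda_1(S_{i-1}-S_m)}$.

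Concretely, for $t\in[S_{i-1},S_i]$ decompose
\[
 u(t)=S(t)u_0+\sum_{m=1}^{i-1}S(t-S_m)\!\int_{S_{m-1}}^{S_m}\!S(S_m-r)G(u(r))\,d\omega'(r)+\int_{S_{i-1}}^{t}\!S(t-r)G(u(r))\,d\omega'(r),
\]
which uses additivity of the Young/Z\"ahle integral over the partition and the fact that the bounded operator $S(t-S_m)$ commutes with fractional differentiation, so it can be pulled outside the inner integral. The term $S(\cdot)u_0$ is estimated in $C^\beta([S_{i-1},S_i];V)$ by combining (\ref{eq1}) and (\ref{eq2}) with $u_0\in V_\beta$, which produces $ce^{-\lambda_1 S_{i-1}}|u_0|_{V_\beta}$. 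The last integral, after the shift of Lemma \ref{l3} to $[0,S_i-S_{i-1}]$ driven by $\theta_{S_{i-1}}\omega'$, is bounded by (\ref{eq37}) and the uniform control on $|||\omega'|||_{\beta'}$; since $S_i-S_{i-1}\le 1$ the constant $c_T$ is absorbed into $c$, yielding $c\mu(1+\|u\|_{\beta,S_{i-1},S_i})$.

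For each middle term, Lemma \ref{l11} applied on $[S_{m-1},S_m]$ (again after shifting and using $|||\omega'|||_{\beta',S_{m-1},S_m}\le\mu$) gives
\[
 \Bigl|\int_{S_{m-1}}^{S_m}S(S_m-r)G(u(r))\,d\omega'(r)\Bigr|_{V_\beta}\le c\mu\bigl(1+\|u\|_{\beta,S_{m-1},S_m}\bigr).
\]
It then suffices to bound the $C^\beta([S_{i-1},S_i];V)$ norm of $t\mapsto S(t-S_m)w$ for $w\in V_\beta$. Using (\ref{eq1}) and (\ref{eq2}) together with $|S(s-S_m)|_{L(V)}\le ce^{-\lambda_1(S_{i-1}-S_m)}$ for $s\ge S_{i-1}\ge S_m$, both the supremum and the H\"older seminorm on this interval are controlled by $ce^{-\lambda_1(S_{i-1}-S_m)}|w|_{V_\beta}$. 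Summing the three estimates produces exactly the inequality claimed.

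The main technical point is justifying the two algebraic manipulations of the pathwise integral on which the whole decomposition rests: additivity across the partition $\{S_m\}$ and the factoring out of the semigroup $S(t-S_m)$ from the fractional-derivative representation (\ref{eq3}). Both follow from the definitions of $D^{\alpha}_{T_1+}$ and $D^{1-\alpha}_{T_2-}$ and the fact that a bounded linear operator commutes with these derivatives, but they must be checked once in order to legitimately reduce matters to Lemma \ref{l11} and the inequality (\ref{eq37}) on each piece.
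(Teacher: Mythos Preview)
Your proposal is correct and follows essentially the same route as the paper: split the stochastic integral along the stopping-time partition, factor the semigroup $S(t-S_m)$ out of each completed block, apply Lemma~\ref{l11} to place those blocks in $V_\beta$, and use the exponential decay of $S$ together with $|||\omega'|||_{\beta',S_{m-1},S_m}\le\mu$ on every subinterval. The only cosmetic difference is that the paper estimates the H\"older seminorm of the ``live'' integral $\int_{S_{i-1}}^{\cdot}$ by hand (writing the increment as $\int_s^t + (S(t-s)-\mathrm{Id})\int_{S_{i-1}}^s$ and invoking Lemma~\ref{l11} for the second piece) and treats the sup norm by separately estimating $|u(S_{i-1})|$, whereas you invoke the ready-made bound~(\ref{eq37}) and bound the full $C^\beta$ norm of each piece in one stroke; the content is the same.
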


The proof of this lemma can be found in the Appendix Section.\\

The cocycle property of $(T_i(\omega))_{i\in Z}$ allows us to introduce a discrete
non-autonomous dynamical system $\Phi$ on $V$ with time set $T^+=Z^+$ for {\em every} $\omega\in\Omega$. In particular, we consider the new shift given by
$$\tilde \theta:  Z \times Z \rightarrow  Z$$
defined by $\tilde \theta_i j=i+j$, for $i,j\in  Z$, where the set $\Omega$ in the general definition of a flow is identified here with $Z$. Then we define
$$\Phi:  Z^+\times Z \times \Omega \times V \rightarrow V$$
as
\begin{eqnarray*}
\Phi(i,j,\omega,u_0)&=&S(T_i(\theta_{T_j(\omega)}\omega))u_0+\int_0^{T_i(\theta_{T_j(\omega)}\omega)} S(T_i(\theta_{T_j(\omega)}\omega)-r)G(u(r)) d\theta_{T_j(\omega)}\omega\\
&=&\phi(T_i(\theta_{T_j(\omega)}\omega),\theta_{T_j(\omega)}\omega,u_0).
\end{eqnarray*}
Note that $\Phi(i,j,\omega,u_0)$ is given by the solution $u$ of (\ref{eq7}) for the noise path $\theta_{T_j(\omega)}\omega$ at time $T_i(\theta_{T_j(\omega)}\omega)$. We would like to emphasize that, in the definition of $\Phi$, $\omega$ acts as a parameter.\\

We now specify the constant $\mu$. Let $c>0$ be the constant from Lemma \ref{l9-u}.  We choose $\mu$ sufficiently small such that for $k_1(\mu)=c\mu/(1-c\mu)<1$ the following inequality holds:
\begin{eqnarray}\label{eq34}
1<-\frac{2}{\lambda_1}\log k_1(\mu).
\end{eqnarray}
Let us also define
\[\lambda=\lambda_1,\,k_0=\frac{c}{1-c\mu},\,k_2=k_1=k_1(\mu),\,t_i=T_i(\theta_{T_j(\omega)}\omega).\]

As we will show in Corollary \ref{l9} below, the choice done in (\ref{eq34}) ensures the condition (\ref{ti}), and with it we will prove the existence of an absorbing ball for $\Phi$ (see Lemma \ref{l13}).

\begin{corollary}\label{l9}
Let $u_0\in V_\beta$ and suppose that $\mu$ is chosen such that (\ref{eq34}) is satisfied. Then the following inequality holds true\footnote{Again, the sum $\sum _{m=1}^{i-1}$ is assumed to be zero for $i=1$.}
\begin{eqnarray}
    |\Phi(i,j,\omega,u_0)|&\le &(1+k_1)^{i-1}e^{-\frac{\lambda_1}{2} T_{i-1}(\theta_{T_j(\omega)}\omega)}(k_0|u_0|_{V_\beta}+k_2) \nonumber \\[-1.5ex]
    \label{eq22}\\[-1.5ex]
   & +&\sum_{m=1}^{i-1} 2k_2(1+k_1)^{i-1-m} e^{-\frac{\lambda_1}{2}(T_{i-1}(\theta_{T_j(\omega)}\omega)-T_{m}(\theta_{T_j(\omega)}\omega))}.\nonumber
   \end{eqnarray}
\end{corollary}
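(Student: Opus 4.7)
The plan is to recognize this corollary as essentially a routine consequence of combining the a priori estimate in Lemma \ref{l9-u} with the discrete Gronwall-type inequality in Lemma \ref{l10}. The identifications are dictated by the choice of constants made just before the corollary: $\lambda=\lambda_1$, $k_0=c/(1-c\mu)$, $k_1=k_2=c\mu/(1-c\mu)$, $t_i=T_i(\theta_{T_j(\omega)}\omega)$, $v_0=|u_0|_{V_\beta}$, and $U_i=\|u\|_{\beta,T_{i-1}(\theta_{T_j(\omega)}\omega),T_i(\theta_{T_j(\omega)}\omega)}$.

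The first step is to rewrite Lemma \ref{l9-u} into the form (\ref{eq17}). The term $c\mu\|u\|_{\beta,T_{i-1},T_i}$ on the right-hand side of Lemma \ref{l9-u} is absorbed on the left, and after dividing through by $1-c\mu$ (which is positive by our choice of $\mu$) the resulting inequality reads exactly
\begin{eqnarray*}
U_i &\le& k_0 v_0 e^{-\lambda t_{i-1}} + \sum_{m=1}^{i-1} k_1 U_m e^{-\lambda(t_{i-1}-t_m)} + \sum_{m=1}^{i-1} k_2 e^{-\lambda(t_{i-1}-t_m)} + k_2,
\end{eqnarray*}
which is the hypothesis (\ref{eq17}) of Lemma \ref{l10}.

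The second step is to verify the spacing hypothesis (\ref{ti}), namely $t_{i-1}-t_{i-2}\le -\tfrac{2}{\lambda}\log k_1$ for $i\ge 2$. By the cocycle property of the stopping times and Lemma \ref{l7}, the increment equals $T(\theta_{T_{i-2}(\theta_{T_j(\omega)}\omega)}\theta_{T_j(\omega)}\omega)\in(0,1]$, so $t_{i-1}-t_{i-2}\le 1$. The smallness condition (\ref{eq34}) on $\mu$ then guarantees $1<-\tfrac{2}{\lambda_1}\log k_1(\mu)$, which is precisely what is required.

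Applying Lemma \ref{l10} now yields (\ref{eq18}) with the constants identified above, which is exactly the right-hand side of (\ref{eq22}). To finish, it suffices to observe that, by definition of $\Phi$ and the norm on $C^\beta$,
\begin{eqnarray*}
|\Phi(i,j,\omega,u_0)| = |u(t_i)| \le \sup_{s\in[t_{i-1},t_i]}|u(s)|\le \|u\|_{\beta,t_{i-1},t_i}=U_i,
\end{eqnarray*}
so the bound for $U_i$ transfers to $|\Phi(i,j,\omega,u_0)|$. I do not anticipate a real obstacle here; the only delicate point is the algebraic rearrangement that isolates $U_i$ on the left and identifies the constants, together with the check that the uniform bound $T_i-T_{i-1}\le 1$ from Lemma \ref{l7} combined with the smallness of $\mu$ forces the spacing condition of Lemma \ref{l10} to hold regardless of the sample path $\omega$.
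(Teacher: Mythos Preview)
Your proposal is correct and follows essentially the same route as the paper: bound $|\Phi(i,j,\omega,u_0)|$ by $\|u\|_{\beta,T_{i-1},T_i}$, absorb the $c\mu\|u\|_{\beta,T_{i-1},T_i}$ term from Lemma~\ref{l9-u} to the left and divide by $1-c\mu$ to obtain the form (\ref{eq17}), then verify (\ref{ti}) via the cocycle property, Lemma~\ref{l7}, and (\ref{eq34}) before invoking Lemma~\ref{l10}. The paper's proof is terser but identical in substance; your explicit mention of the algebraic rearrangement producing $k_0,k_1,k_2$ is a helpful clarification the paper leaves implicit.
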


{\em Proof.} We only need to take into account that, by definition,
\[ |\Phi(i,j,\omega,u_0)|=|u(T_i(\theta_{T_j(\omega)}\omega)|\le  \|u\|_{\beta,T_{i-1}(\theta_{T_j(\omega)}\omega),T_{i}(\theta_{T_j(\omega)}\omega)}\]
and therefore, thanks to Lemma \ref{l9-u}, we obtain the desired estimate due to the above choice of $k_0,\, k_1,\,k_2,\,T_i$ if in addition we take $v_0=|u_0|_{V_\beta}$.
We would like to stress that we can apply the Gronwall-like Lemma \ref{l10} in this situation since the stopping times satisfy the condition (\ref{ti}). Actually, the cocycle property and (\ref{eq34}) imply
$$
 t_{i-1}-t_{i-2}=T_{i-1}(\theta_{T_j(\omega)}\omega)-T_{i-2}(\theta_{T_j(\omega)}\omega)=T_{1}(\theta_{T_{i-2+j}(\omega)}\omega)\le 1<-\frac{2}{\lambda_1}\log k_1(\mu). \eqno\endproof $$

Now we formulate a smallness condition for all $\omega\in\Omega$.
We  assume that the stopping times satisfy
\begin{eqnarray}\label{eq21}
   1>  \liminf_{i\to-\infty}\frac{|T_i(\omega)|}{|i|} & \geq d \geq \frac{2 (\log(1+k_1(\mu))+\nu)}{\lambda_1}
\end{eqnarray}
where $\nu\in [0,\frac{ d\lambda_1}{2})$ is the parameter describing backward $\nu$-exponentially growing sets $\dD_{Z,V}^\nu$. 
In addition assume that
\begin{eqnarray}\label{eq39}
\nu+d>1.
\end{eqnarray}

We also assume that the sequence $(|T(\theta_{T_i(\omega)}\omega)|^{-\beta})_{i\in Z}$ is subexponential growing for $\omega \in\Omega$:
\begin{eqnarray}\label{eq39b}
\lim_{i\to-\infty}\frac{\log^+(|T(\theta_{T_i(\omega)}\omega)|^{-\beta})}{|i|}=0.
\end{eqnarray}
Later on, in Section 4, we will give an example of a set $\Omega$ of $\omega$ fulfilling conditions (\ref{eq21}), (\ref{eq39}) and (\ref{eq39b}) being $(\theta_t)_{t\in R}$--invariant.\\

We now consider the discrete non--autonomous dynamical system $\Phi$ with set of non-autonomous perturbations $Z$ and  shifts $\tilde \theta_i j=i+j$. Recall that the set system $\dD_{Z,V}^\nu$ is given by the family of sets $(D(i))_{i\in Z}$ such that $D(i)\subset V$ is included
in a ball with center zero and radius $r(i)$  where
\begin{eqnarray*}
    \limsup_{i\to-\infty} \frac{\log^+r(i)}{|i|}<\nu.
\end{eqnarray*}

Our next aim is to prove that the discrete non-autonomous dynamical system $\Phi$ has an absorbing set  consisting in a ball $B$ contained in $\mathcal D_{Z,V}^\nu$, which means that in particular $B$ is in $V$.\\

\begin{lemma}\label{l13}
Suppose that (\ref{eq34}), (\ref{eq21}), (\ref{eq39}) and (\ref{eq39b}) hold.
Then $\Phi$ has a $\dD_{Z,V}^\nu$-absorbing set $B(\omega)=(B(i,\omega))_{i\in Z}$, where $B(i,\omega)$ is
given by a ball in $V$ with center 0 and radius
\begin{eqnarray}\label{eq27}
    R(i,\omega)=2\sum_{m=-\infty}^0 2k_2(1+k_1)^{-m} e^{\frac{\lambda_1}{2}T_{m}
    (\theta_{T_{i-1}(\omega)}\omega)}.
\end{eqnarray}
\end{lemma}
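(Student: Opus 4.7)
The plan is to establish both that $B$ lies in $\dD_{Z,V}^\nu$ and that $B$ absorbs every $D\in\dD_{Z,V}^\nu$. Fix $D$ with radius $r$ satisfying $\limsup_{n\to-\infty}\log^+r(n)/|n|<\nu$, fix $j\in Z$ and $\omega\in\Omega$, and take $u_0\in D(j-i)$. The immediate difficulty is that Corollary~\ref{l9} requires a $V_\beta$-valued initial datum while $u_0$ only lies in $V$. I would therefore use the cocycle property to peel off one step,
\[
\Phi(i,j-i,\omega,u_0)=\Phi(i-1,j-i+1,\omega,v_0),\qquad v_0:=\Phi(1,j-i,\omega,u_0),
\]
and invoke the regularization estimate (\ref{eq31}); since the definition of $T$ forces $|||\theta_{T_{j-i}(\omega)}\omega|||_{\beta^\prime,0,T(\theta_{T_{j-i}(\omega)}\omega)}\le\mu$, this yields
\[
|v_0|_{V_\beta}\le c\,T(\theta_{T_{j-i}(\omega)}\omega)^{-\beta}\bigl(r(j-i)+1\bigr).
\]

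Applied to $v_0\in V_\beta$ on the remaining $i-1$ intervals, Corollary~\ref{l9} provides $|\Phi(i-1,j-i+1,\omega,v_0)|\le I_i+\Sigma_i$, where $\Sigma_i$ is the cumulative sum and $I_i$ is the initial-datum term. Using the cocycle identity $T_m(\theta_{T_{j-1}(\omega)}\omega)=T_{m+j-1}(\omega)-T_{j-1}(\omega)$ together with the index change $k=i-2-m$ I would rewrite
\[
\Sigma_i=\sum_{k=0}^{i-3}2k_2(1+k_1)^{k}e^{\frac{\lambda_1}{2}T_{-k}(\theta_{T_{j-1}(\omega)}\omega)}\le\tfrac12R(j,\omega),
\]
which is precisely the reason $R(j,\omega)$ was chosen to be twice the corresponding infinite tail.

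It then remains to drive
\[
I_i=(1+k_1)^{i-2}e^{-\frac{\lambda_1}{2}(T_{j-1}(\omega)-T_{j-i+1}(\omega))}\bigl(k_0|v_0|_{V_\beta}+k_2\bigr)
\]
to zero as $i\to\infty$. Condition (\ref{eq21}) yields $-T_{j-i+1}(\omega)\ge(d-\varepsilon)(i-j-1)$ eventually, (\ref{eq39b}) makes $T(\theta_{T_{j-i}(\omega)}\omega)^{-\beta}$ subexponential in $i$, and $r(j-i)$ grows at exponential rate strictly less than $\nu$. Collecting these rates, the asymptotic growth exponent of $I_i$ is at most $\log(1+k_1)+\nu-\lambda_1 d/2+o(1)$, which is strictly negative by the choice of $d$ in (\ref{eq21}) once the $\varepsilon$-margin and the strictness in the growth of $r$ are used. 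Selecting the absorption time $\tilde T(D,j,\omega)$ so that $I_i\le R(j,\omega)/2$ for $i\ge\tilde T$ then delivers $|\Phi(i,j-i,\omega,u_0)|\le R(j,\omega)$, which is the desired absorption.

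Finally, that $B\in\dD_{Z,V}^\nu$ follows from the same bookkeeping applied directly to $R(j,\omega)$: the defining series is a geometric tail with ratio of order $(1+k_1)e^{\lambda_1 T_{-1}(\cdot)/2}$, summable by (\ref{eq21}), and its overall growth as $j\to-\infty$ stays $\nu$-exponential thanks to (\ref{eq21}) and (\ref{eq39}). The main obstacle will be the exponential bookkeeping that forces $I_i\to 0$: the geometric blow-up $(1+k_1)^{i-2}$, the stopping-time decay $e^{-\lambda_1(d-\varepsilon)(i-j-1)/2}$, the subexponential smoothing constant $T^{-\beta}$, and the $\nu$-exponential growth of $r(j-i)$ must all be combined without exhausting the margin, and the precise purpose of conditions (\ref{eq21})--(\ref{eq39b}) is to leave just enough room for this balance to close.
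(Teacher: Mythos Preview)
Your proposal is correct and follows essentially the same route as the paper: peel off one step via the cocycle property to regularize the initial datum into $V_\beta$ using (\ref{eq31}), then apply Corollary~\ref{l9} to the remaining steps, rewrite the sum via the stopping-time cocycle identity so that it becomes a partial sum of the series defining $R(j,\omega)/2$, and finally balance the rates $\log(1+k_1)$, $\nu$, and $\lambda_1 d/2$ from (\ref{eq21}) (together with the subexponentiality (\ref{eq39b})) to kill the initial-datum term. The only organizational difference is that the paper packages the one-step regularized image as a set $F(j,\omega):=\overline{\Phi(1,j-1,\omega,D(j-1))}$ and first verifies $F\in\dD_{Z,V_\beta}^\nu$ before applying Corollary~\ref{l9}, and it defers the verification that $B\in\dD_{Z,V}^\nu$ to a separate Lemma~\ref{l13-bis} (where (\ref{eq39}) is used explicitly through $\nu>1-d$); your pointwise treatment and inclusion of this last step here amount to the same argument.
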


\begin{proof} We want to get a $\dD_{Z,V}^\nu$-absorbing set $B$ for $\Phi$. In Corollary \ref{l9} we have obtained estimates for $\Phi$ when the initial condition $u_0\in V_\beta$, so in this proof in a first step we have to ensure that picking any $D\in \mathcal D_{Z,V}^\nu$ we can build an appropriate set $F$ depending on $D$ such that $F\in \mathcal D_{Z,{V_\beta}}^\nu$. This property will be the key to later on proving that $B$ is an absorbing set for $\Phi$.

Define for  $D\in\dD_{Z,V}^\nu$ the set
$$F(j,\omega):=\overline{\Phi(1,j-1,\omega,D(j-1))}.$$
Such an $F$ is a backward $\nu$--exponentially growing set in $V_\beta$, since, if $v_0\in D(j-1)$ we know that
\begin{eqnarray}
\qquad \Phi(1,j-1,\omega,v_0)&=&u(T_1(\theta_{T_{j-1}(\omega)}\omega))=S(T_1(\theta_{T_{j-1}(\omega)}\omega))v_0\nonumber \\[-1.5ex]
\label{eqv}\\[-1.5ex]
&+&\int_0^{T_1(\theta_{T_{j-1}(\omega)}\omega)} S(T_1(\theta_{T_{j-1}(\omega)}\omega)-r) G(u(r))d\theta_{T_{j-1}(\omega)}\omega.\nonumber
\end{eqnarray}
From (\ref{eq31}), 
\begin{eqnarray}
&& |\Phi(1,j-1,\omega,v_0)|_{V_\beta} \leq \frac{c}{|T_1(\theta_{T_{j-1}(\omega)} \omega)|^\beta} |v_0|\nonumber \\[-1.5ex]
\label{eq32}\\[-1.5ex]
&& \qquad +c|||\omega|||_{\beta^\prime,0,T_1(\theta_{T_{j-1}(\omega)} \omega)} (1+\|u\|_{\beta,\sim, 0, T_1(\theta_{T_{j-1}(\omega)} \omega)}).\nonumber
\end{eqnarray}
The last term on the previous expression can be estimated by the technique of Lemma \ref{l9-u} for $i=1$ but using the $\|\cdot\|_{\beta,\sim, 0, T_1(\theta_{T_{j-1}(\omega)} \omega)}$--norm together with (\ref{eq36}), getting that
\[
(1-c\mu)\|u\|_{\beta,\sim, 0, T_1(\theta_{T_{j-1}(\omega)} \omega)} \leq c |v_0|+c\mu
\]
and therefore,
\begin{eqnarray*}
\sup_{v_0\in D(j-1,\omega)}&&|\Phi(1,j-1,\omega,v_0)|_{V_\beta}\\
&&\leq \bigg(\frac{c}{|T_1(\theta_{T_{j-1}(\omega)} \omega)|^\beta}+\frac{c\mu}{1- c\mu}\bigg)\sup_{v_0\in D(j-1,\omega)} |v_0|
+ c\mu \bigg(1+ \frac{ c\mu}{1- c\mu}  \bigg).
\end{eqnarray*}

The first term on the right hand side is backward $\nu$--exponentially growing, which follows from the assumption that $(|T(\theta_{T_i(\omega)}\omega)|^{-\beta})_{i\in Z}$ is subexponentially growing.
Indeed it is a product of two terms where one factor satisfies (\ref{eq11}) while the other is $\nu$--exponentially growing.

Therefore, for any $D\in\dD_{Z,V}^\nu$ the set $F(j,\omega)\in \dD_{Z,V_\beta}^\nu$ and then there is a sequence $(B_{V_\beta}(0,\rho(i,\omega)))_{i\in Z}$ backward $\nu$--exponentially growing such that these balls in $V_\beta$ with center zero and radius $\rho(i,\omega)$ contain the sets $F(i,\omega)$. Moreover, from Corollary \ref{l9} we immediately have that
\begin{eqnarray*}
    \sup_{u_0\in F(j-i,\omega)}|\Phi(i,j-i,\omega,u_0)|&\le &(1+k_1)^{i-1}e^{-\frac{\lambda_1}{2} T_{i-1}(\theta_{T_{j-i}(\omega)}\omega)}(k_0\rho(j-i,\omega)+k_2)\\
   & +&\sum_{m=1}^{i-1} 2k_2(1+k_1)^{i-1-m} e^{-\frac{\lambda_1}{2}(T_{i-1}(\theta_{T_{j-i}(\omega)}\omega)-T_{m}(\theta_{T_{j-i}(\omega)}\omega))}.
\end{eqnarray*}
Taking into account that the stopping times satisfy the cocycle property, it holds
\[
    T_{i-1}(\theta_{T_{j-i}(\omega)}\omega)-T_{m}(\theta_{T_{j-i}(\omega)}\omega)
    =T_{i-1-m}(\theta_{T_{j+m-i}(\omega)}\omega)=-T_{-i+m+1}(\theta_{T_{j-1}(\omega)}\omega)\]
and thus
\begin{eqnarray}
    |\Phi(i,j-i,\omega,u_0)|&\le &(1+k_1)^{i-1}e^{\frac{\lambda_1}{2} T_{-i+1}(\theta_{T_{j-1}(\omega)}\omega)}(k_0\rho(j-i,\omega)+k_2)\nonumber \\[-0.5ex]
    &+&\sum_{m=1}^{i-1} 2k_2(1+k_1)^{i-1-m} e^{\frac{\lambda_1}{2}T_{-i+m+1}(\theta_{T_{j-1}(\omega)}\omega)} \nonumber \\[-2.5ex]
    \label{eqphi}\\[-1.5ex]
    &=&(1+k_1)^{i-1}e^{\frac{\lambda_1}{2} T_{-i+1}(\theta_{T_{j-1}(\omega)}\omega)}(k_0\rho(j-i,\omega)+k_2)\nonumber \\[-0.5ex]
    &+&\sum_{m=2-i}^{0} 2k_2(1+k_1)^{-m} e^{\frac{\lambda_1}{2}T_{m}(\theta_{T_{j-1}(\omega)}\omega)}\nonumber.
\end{eqnarray}
Note that for every $\eps>0$ there is an $m_\epsilon>0$ such that for $m<0,\,|m|>m_\epsilon$, then $|T_{m}(\theta_{T_{j-1}(\omega)}\omega)| > (d-\epsilon)|m|$, or equivalently, $\frac{\lambda_1}{2} T_{m}(\theta_{T_{j-1}(\omega)}\omega) < m \frac{\lambda_1}{2} (d- \epsilon)$, which is a consequence of the first inequality in (\ref{eq21}). Therefore,
\begin{eqnarray*}
(1+k_1)^{-m} e^{\frac{\lambda_1}{2}T_{m}(\theta_{T_{j-1}(\omega)}\omega)}
&=&e^{-m\log(1+k_1)+\frac{\lambda_1}{2} T_{m}(\theta_{T_{j-1}(\omega)}\omega)} \\
&\leq & e^{m(-\log(1+k_1)+\frac{\lambda_1}{2} (d-\epsilon))}\le e^{\nu m}
\end{eqnarray*}
where the last estimate is true since $\frac{\lambda _1}{2} d -\log (1+k_1)-\nu- \frac{\lambda _1}{2}  \epsilon>0$ for small $\epsilon>0$, which follows from the second inequality of (\ref{eq21}). Hence, the sum on the right hand side of (\ref{eqphi}) converges to $\frac{R(j)}{2}$ when $i\to\infty$.

On the other hand, the first term on (\ref{eqphi}) converges to zero for $i\to\infty$ due to (\ref{eq21}), since
\begin{eqnarray*}
   & & (1+k_1)^{i-1}e^{\frac{\lambda_1}{2} T_{-i+1}(\theta_{T_{j-1}(\omega)}\omega)}\sup_{u_0\in F(-i+j,\omega)}(k_0|u_0|_{V_\beta}+k_2)\\
    &
  \le&(1+k_1)^{i-1}e^{\frac{\lambda_1}{2} T_{-i+1}(\theta_{T_{j-1}(\omega)}\omega)}(k_0 \rho(j-i,\omega)+k_2).
\end{eqnarray*}
To obtain the absorbing property it remains to mention that, thanks to the cocycle property for $\Phi$, for sufficient large  $i\in Z$,
\begin{eqnarray*}
& & |\Phi(i+1,j-i-1,\omega,v_0)|=|\Phi(i,j-i,\omega,\Phi(1,j-i-1,\omega,v_0))|
\\&\leq& (1+k_1)^{i}e^{\frac{\lambda_1}{2} T_{i}(\theta_{T_{j-i-1}(\omega)}\omega)} (k_0 |\Phi(1,j-i-1,\omega,v_0)|_{V_\beta} +k_2) +R(j)/2,
\end{eqnarray*}
hence
\[
    \sup_{v_0\in D(j-i-1)}|\Phi(i+1,j-i-1,\omega,v_0)|\le \sup_{u_0\in F(j-i,\omega)}|\Phi(i,j-i,\omega,u_0)|< R(j)
\]
for sufficient large  $i\in Z$.\qquad 
\end{proof}\\

\begin{lemma}\label{l13-bis}
Under the conditions of Lemma \ref{l13}, the absorbing set $B(\omega)$ given by (\ref{eq27}) is contained in $\dD_{Z,V}^\nu$.
\end{lemma}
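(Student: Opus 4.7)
Let me sketch the proof strategy.

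The goal is to show that the radius $R(i,\omega)$ defined by (\ref{eq27}) satisfies $\limsup_{i\to-\infty} \log^+ R(i,\omega)/|i| < \nu$. I would begin by exploiting the cocycle identity $T_m(\theta_{T_{i-1}(\omega)}\omega) = T_{m+i-1}(\omega) - T_{i-1}(\omega)$ inside the defining series $R(i,\omega) = 4k_2\sum_{m=-\infty}^0 (1+k_1)^{-m} e^{\frac{\lambda_1}{2} T_m(\theta_{T_{i-1}(\omega)}\omega)}$, and then performing the change of index $n = m+i-1$ to move everything onto stopping times at the fixed base point $\omega$:
\[
R(i,\omega) = 4k_2 \, e^{-\frac{\lambda_1}{2} T_{i-1}(\omega)} (1+k_1)^{i-1} \sum_{n=-\infty}^{i-1} (1+k_1)^{-n} e^{\frac{\lambda_1}{2} T_n(\omega)}.
\]

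Next I would recycle the pointwise estimate already used in the proof of Lemma \ref{l13}: the second inequality of (\ref{eq21}), combined with $|T_n(\omega)| \ge (d-\epsilon)|n|$ for all $n \le -N_\epsilon(\omega)$, gives $(1+k_1)^{-n} e^{\frac{\lambda_1}{2} T_n(\omega)} \le e^{\nu n}$ with a threshold $N_\epsilon$ depending only on $\omega$. For $i$ sufficiently negative every index of the sum lies past this threshold, and summation of the resulting geometric series produces
\[
\sum_{n=-\infty}^{i-1} (1+k_1)^{-n} e^{\frac{\lambda_1}{2} T_n(\omega)} \le \frac{e^{\nu(i-1)}}{1-e^{-\nu}}.
\]
Substituting back and using the trivial bound $|T_{i-1}(\omega)| \le |i-1|$ provided by $|\hat T| \le 1$ in Lemma \ref{l7},
\[
R(i,\omega) \le \frac{4k_2}{1-e^{-\nu}} \exp\!\Bigl\{|i-1|\bigl[\tfrac{\lambda_1}{2} - \log(1+k_1) - \nu\bigr]\Bigr\}.
\]

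Taking $\log^+$, dividing by $|i|$ and passing to the limit yields a limsup dominated by $\lambda_1/2 - \log(1+k_1) - \nu$. The final step is to check that this quantity is strictly below $\nu$: using $\log(1+k_1) \le \lambda_1 d/2 - \nu$ from (\ref{eq21}) the rate collapses to $\lambda_1(1-d)/2$, and the additional hypothesis (\ref{eq39}), which forces $1-d < \nu$, together with the smallness of $\mu$ already imposed in choosing $k_1(\mu)$ to satisfy (\ref{eq34}), secures a strict gap. This closes the argument that $B(\omega) \in \dD_{Z,V}^\nu$. The main obstacle is precisely in sharpening this last step: the bound $|T_{i-1}(\omega)|\le |i-1|$ is attained along subsequences where the stopping times take near-maximal jumps, and a naive reading would forfeit too much. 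One has to observe that on exactly those subsequences the monotonicity of $n\mapsto |T_n(\omega)|$ forces the summands in the series to decay strictly faster than $e^{\nu n}$, so that the amplification of the prefactor $e^{-\lambda_1 T_{i-1}/2}$ is compensated by the corresponding shrinkage of the sum, and the worst-case rate is indeed bounded by $\nu$ under the standing assumptions.
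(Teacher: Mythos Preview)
Your overall strategy—cocycle identity, re-indexing to the fixed base point $\omega$, then invoking (\ref{eq21})—matches the paper's. The gap is in the quantitative bound you place on the sum.

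You estimate each summand by $(1+k_1)^{-n}e^{\frac{\lambda_1}{2}T_n(\omega)}\le e^{\nu n}$, the convergence criterion borrowed from the proof of Lemma~\ref{l13}. This throws away the actual geometric decay rate of the summands, which is $\frac{\lambda_1}{2}(d-\epsilon)-\log(1+k_1)$ and may be much larger than~$\nu$. After combining with the prefactor bound $|T_{i-1}|\le|i-1|$ you arrive at the rate $\frac{\lambda_1}{2}-\log(1+k_1)-\nu$. Your subsequent claim that this ``collapses to $\lambda_1(1-d)/2$'' via (\ref{eq21}) is an error in the direction of the inequality: (\ref{eq21}) gives $\log(1+k_1)\le\frac{\lambda_1 d}{2}-\nu$, hence
\[
\frac{\lambda_1}{2}-\log(1+k_1)-\nu\;\ge\;\frac{\lambda_1(1-d)}{2},
\]
a \emph{lower} bound on your rate, not an upper bound. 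When $k_1$ and $\nu$ are small your rate is essentially $\lambda_1/2$, and nothing in the standing hypotheses prevents $\lambda_1/2\gg\nu$. The concluding ``compensation'' paragraph is not a proof: once you have replaced the summands by $e^{\nu n}$ the information needed for any compensation has already been discarded, and monotonicity of $n\mapsto|T_n(\omega)|$ does not recover it.

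The paper avoids this loss by inserting the asymptotic $T_n(\omega)\le d\,n+\tfrac{\delta}{2}|n|$ directly into the exponent \emph{before} separating the $m$- and $i$-dependence. Writing
\[
T_{m+i-1}(\omega)-(i-1)=\bigl(T_{m+i-1}(\omega)-d(m+i-1)\bigr)+dm-(1-d)(i-1),
\]
the first bracket is $\le\tfrac{\delta}{2}|m+i-1|\le\tfrac{\delta}{2}(|m|+|i-1|)$, so the $i$-dependent contribution to the exponent is proportional to $(1-d+\delta)|i-1|$ rather than to your $\bigl(\tfrac{\lambda_1}{2}-\log(1+k_1)-\nu\bigr)|i-1|$, and it is this smaller quantity that is compared against~$\nu$ through (\ref{eq39}). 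In your framework the equivalent repair is to replace the summand bound $e^{\nu n}$ by the sharper $e^{-\gamma|n|}$ with $\gamma=\frac{\lambda_1}{2}(d-\delta)-\log(1+k_1)$; this larger $\gamma$ then cancels the overcount in the prefactor and reproduces the paper's rate.
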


\begin{proof} We show that $(R(i))_{i\in N}$ is $\nu$--exponentially growing,  for which we will use the first inequality in (\ref{eq21}). Since
\[
    T_m(\theta_{T_{i-1}(\omega)}\omega)=T_{m+i-1}(\omega)-T_{i-1}(\omega)=T_{m+i-1}(\omega)+T_{-i+1}(\theta_{T_{i-1}(\omega)}\omega),
\]
we obtain that
\begin{eqnarray*}
   R(i)&=& 2\sum_{m=-\infty}^0 2k_2(1+k_1)^{-m} e^{\frac{\lambda_1}{2}T_{m}
    (\theta_{T_{i-1}(\omega)}\omega)}\\
    &=&2\sum_{m=-\infty}^0 2k_2(1+k_1)^{-m}e^{\frac{\lambda_1}{2} (T_{m+i-1}(\omega)+T_{-i+1}(\theta_{T_{i-1}(\omega)}\omega))}.
\end{eqnarray*}
Furthermore, due to (\ref{eq21}), for any sufficiently small $\delta$ there exists $ i_0(\delta)\in Z^-$ such that for $i\le i_0(\delta)$ we have
$$
    T_i(\omega)\le d i+\frac{\delta}{2}|i|,
$$
which together with $T_{-i+1}(\theta_{T_{i-1}(\omega)}\omega)\leq -i+1$ implies that
\begin{eqnarray*}
   R(i)&\le&2\sum_{m=-\infty}^0 2k_2(1+k_1)^{-m}e^{\frac{\lambda_1}{2} ((T_{m+i-1}(\omega)-d(i+m-1))+d(i+m-1)-(i-1))}\\
     & \le &2 \sum_{m=-\infty}^0  2k_2 (1+k_1)^{-m}e^{\frac{\lambda_1}{2} (\delta|i+m-1|+d(i+m-1)-(i-1))}\\
    & \le & 2 e^{\frac{\lambda_1}{2} \delta|i-1|}e^{(1-d)|i-1|}\sum_{m=-\infty}^0 2k_2(1+k_1)^{-m}e^{\frac{\lambda_1}{2} (\delta|m|+dm)}.
\end{eqnarray*}
For sufficiently small $\delta$ the sum on the right hand side is finite. Moreover, for $\nu> (1-d)$, see (\ref{eq39}), and $\delta>0$ small
\begin{eqnarray*}
    \lim_{i\to-\infty}e^{\nu i}R(i) &\leq & 2 e^{\frac{\lambda_1}{2} \delta}  \lim_{i\to-\infty}e^{\frac{\lambda_1}{2} \delta|i|+\nu i}e^{(1-d)|i-1|}\sum_{m=-\infty}^0 2k_2 (1+k_1)^{-m}e^{\frac{\lambda_1}{2} (\delta|m|+dm)} =0
\end{eqnarray*}
such that $B\subset \mathcal D_{Z,V}^\nu$.
\qquad 
\end{proof}
\\


The conditions of the last two lemmata are always grantable if the non--autonomous perturbation is small in the following sense: $\mu$ could be chosen small. Then, if $\nu>0$ is also small, $d$ should satisfy essentially the next three conditions:
\[
d<1,\qquad d+\nu>1,\qquad d>2\nu/\lambda_1.
\]
And we can find always an appropriate $d$ solving  these three inequalities. Note that the worst case happens when the first eigenvalue $\lambda_1$ is small, which forces $\nu$ to be chosen small enough and $d$ to be close to one. But $d$ close to one means that the stopping times $T_i$ are close to one in {\it the average}, in the sense of the first formula in (\ref{eq21}), or in other words, that the contribution of $|||\omega|||_{\beta^\prime,0,\tau}$ for the construction of the stopping time is small in the average, see (\ref{eq29}).\\

\begin{theorem}\label{t4}
Consider $\omega\in\Omega$ such that the assumptions of Lemma \ref{l13} hold.
Then the discrete non-autonomous dynamical system $\Phi(\cdot,\omega)$ has a pullback attractor $\{\mathcal A(i,\omega)\}_{i\in
 Z}$ with respect to the system of $\nu$--exponentially growing sets $\dD_{Z,V}^\nu$.
\end{theorem}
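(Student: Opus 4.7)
The plan is to deduce Theorem \ref{t4} from the abstract attractor Theorem \ref{t3} applied to the discrete cocycle $\Phi$, so we only need to verify the two hypotheses: (i) continuity of $\Phi(i,j,\omega,\cdot)$ on $V$, and (ii) existence of a \emph{compact} $\dD_{Z,V}^\nu$-pullback absorbing family. Lemma~\ref{l13} together with Lemma~\ref{l13-bis} already delivers a pullback absorbing family $B=(B(i,\omega))_{i\in Z}$ inside $\dD_{Z,V}^\nu$, but the balls $B(i,\omega)$ live in $V$ and are therefore not compact. The compactness will be recovered, as usual for parabolic problems, by exploiting the smoothing of the semigroup $S$, which pushes solutions into $V_\beta$ in one time step, combined with the compact embedding $V_\beta\hookrightarrow V$.

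First I would define the candidate compact absorbing family by one extra application of the cocycle,
\[
  \widetilde B(j,\omega):=\overline{\Phi(1,j-1,\omega,B(j-1,\omega))}^{\,V}.
\]
The estimate (\ref{eq32}), which was already used in the proof of Lemma~\ref{l13}, shows that for every $v_0\in B(j-1,\omega)$ the vector $\Phi(1,j-1,\omega,v_0)$ lies in $V_\beta$ with
\[
  |\Phi(1,j-1,\omega,v_0)|_{V_\beta}\le \Big(\tfrac{c}{|T_1(\theta_{T_{j-1}(\omega)}\omega)|^{\beta}}+\tfrac{c\mu}{1-c\mu}\Big)\,R(j-1,\omega)+c\mu\Big(1+\tfrac{c\mu}{1-c\mu}\Big).
\]
Since $V_\beta\Subset V$, each $\widetilde B(j,\omega)$ is compact in $V$. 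To verify that $\widetilde B\in \dD_{Z,V}^\nu$, one checks that the radius above is $\nu$-exponentially growing in $|j|$: the factor $R(j-1,\omega)$ is $\nu$-exponentially growing by Lemma~\ref{l13-bis}, and the subexponential growth hypothesis (\ref{eq39b}) on $|T(\theta_{T_{j-1}(\omega)}\omega)|^{-\beta}$ ensures that multiplying by this factor does not spoil the $\nu$-exponential growth. The absorbing property of $\widetilde B$ follows immediately from that of $B$ and the cocycle identity: if $D\in\dD_{Z,V}^\nu$, then for $i\ge \widetilde T(D,\omega)+1$,
\[
  \Phi(i,j-i,\omega,D(j-i))=\Phi(1,j-1,\omega,\Phi(i-1,j-i,\omega,D(j-i)))\subset \Phi(1,j-1,\omega,B(j-1,\omega))\subset \widetilde B(j,\omega).
\]

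For continuity, I would argue that for fixed $(i,j,\omega)$ the map $u_0\mapsto \Phi(i,j,\omega,u_0)$ is continuous on $V$. This is a direct consequence of the well-posedness in $C^{\beta,\sim}([0,T];V)$ stated in Theorem~\ref{t11}: subtracting two mild solutions $u,\bar u$ associated to initial data $u_0,\bar u_0$ and using the linearity of $S(t)(u_0-\bar u_0)$ together with the Lipschitz bound (\ref{eq36}) applied to $G(u)-G(\bar u)$ produces an estimate of the form $\|u-\bar u\|_{\beta,\sim,0,T}\le c_T(1+|||\omega|||_{\beta^\prime})\,|u_0-\bar u_0|$ (after absorbing the small Lipschitz term with the $\rho$-weighted norm (\ref{eq38})), which gives continuity at the time $T=T_i(\theta_{T_j(\omega)}\omega)$.

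Having verified both hypotheses for the fixed $\omega$ described in the theorem, Theorem~\ref{t3} immediately yields a unique pullback attractor $\aA(\cdot,\omega)\in\dD_{Z,V}^\nu$ given by
\[
  \aA(j,\omega)=\bigcap_{\tau\ge T_0(\widetilde B,\omega)}\overline{\bigcup_{i\ge \tau}\Phi(i,j-i,\omega,\widetilde B(j-i,\omega))}.
\]
The step I expect to require most care is the bookkeeping that confirms $\widetilde B\in\dD_{Z,V}^\nu$: one has to multiply the $\nu$-exponential growth of $R(j-1,\omega)$ by the (a priori singular) factor $|T(\theta_{T_{j-1}(\omega)}\omega)|^{-\beta}$, and verify that hypothesis (\ref{eq39b}) is precisely what is needed to preserve the class $\dD_{Z,V}^\nu$. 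Everything else is a clean combination of Lemmata \ref{l13}, \ref{l13-bis}, Theorem~\ref{t11}, and the abstract Theorem~\ref{t3}.
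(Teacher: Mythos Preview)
Your proof is correct and follows the same overall strategy as the paper: verify continuity and produce a compact absorbing family inside $\dD_{Z,V}^\nu$, then invoke Theorem~\ref{t3}. The only real difference lies in how the compact absorbing set is placed into $\dD_{Z,V}^\nu$. You take $\widetilde B(j,\omega)=\overline{\Phi(1,j-1,\omega,B(j-1,\omega))}$ and then check directly, via the $V_\beta$ bound and hypothesis~(\ref{eq39b}), that the resulting radii stay $\nu$--exponentially growing. The paper instead first lets $B$ absorb itself, i.e.\ sets $C(j,\omega)=\overline{\Phi(1,j-1,\omega,\Phi(T^\ast,j-T^\ast-1,\omega,B(j-T^\ast-1,\omega)))}$ with $T^\ast$ the self-absorption time; since this is $\Phi(T^\ast+1,\ldots)$ applied to $B$, one gets $C(j,\omega)\subset B(j,\omega)$ for free, and hence $C\in\dD_{Z,V}^\nu$ without any further growth estimate. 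Both routes work: the paper's self-absorption trick sidesteps the bookkeeping you flagged as delicate, while your direct estimate is equally valid and shows explicitly where~(\ref{eq39b}) enters.
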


\begin{proof}
Note that $\Phi(i,j,\omega,\cdot)$ is continuous on $V$. Let  $B(\omega)\subset \dD_{Z,V}^\nu$ be the absorbing set from Lemma \ref{l13}. For any $j\in Z$, let $\tilde T(B,j,\omega)$ be the absorption time of $B(j,\omega)\in \dD_{Z,V}^\nu$ by itself. Set $T^*=T^*(j)=\tilde T(B,j,\omega)$. Notice that $\Phi(T^*,-T^*-1+j,\omega, B(-T^*-1+j,\omega))$ is absorbing, and set
\begin{eqnarray*}
   C(j,\omega):= \overline{\Phi(1,-1+j,\omega,\Phi(T^*,-T^*-1+j,\omega, B(-T^*-1+j,\omega)))}\subset B(j,\omega).
\end{eqnarray*}
Hence, the definition of $T^*$ ensures that $C(\omega)=(C(j,\omega))_{j\in Z}$ is $\dD_{Z,V}^\nu$--pullback absorbing and contained in $\dD_{Z,V}^\nu$. In addition, by (\ref{eq31}) and the compact embedding of $V_\beta\subset V$ the sets $C(j,\omega)$ are also compact. Now we can apply Theorem \ref{t3} giving the existence of a pullback attractor  $\{\mathcal A(i,\omega)\}_{i\in  Z}$ for $\Phi$. \qquad \end{proof}\\

The conclusion of the last theorem will be used to study $\phi$ as a non--autonomous dynamical system. In particular we show that this dynamical system has a pullback--attractor.

\subsection{Attracting sets for the non-autonomous dynamical systems associated to (\ref{eq7})}

We now study the non-autonomous dynamical system $\phi$ given by (\ref{eq7}). For this purpose it would be enough to consider this mapping $\phi$ along one orbit $\bigcup_{t\in R}\{\theta_{t}\omega\}$ for a fixed arbitrary $\omega\in\Omega$. However we are going to consider measurable mappings $\phi$ on the entire set $\Omega$.
In this sense the following definition is given.

Let us now define the family of closed non-empty sets $(D(\omega))_{\omega \in \Omega}\subset \dD_{R,V}^0$ with $D(\omega)=B_V(0,r(\omega))$ such that
\begin{eqnarray*}
    \limsup_{R^\ni t\to-\infty}\frac{\log^+ r(\theta_t\omega)}{|t|}=0.
\end{eqnarray*}
Define for such a $D$ the sets $(G(i,\omega))_{i\in Z}$ given by
\begin{equation}\label{eq40b}
G(i,\omega)=\overline{\bigcup_{\tau\in [T_{i-1}(\omega),T_i(\omega)]}D(\theta_\tau \omega)},\quad i\in Z,
\end{equation}
which are elements of $\dD_{Z,V}^0$. In the contrary case, we would find an $\omega$, a subsequence $(i^\prime(i,\omega))_{i\in Z^-}$, $i^\prime(i,\omega)\in Z^-$, $t_{i^\prime}\in
[T_{i^\prime-1}(\omega),T_{i^\prime}(\omega)]$ and $u_{i^\prime}\in D(\theta_{t_{i^\prime} }\omega)$ such that
\[
0<\limsup_{i^\prime\to-\infty}\frac{\log^+|u_{i^\prime}|}{|i^\prime|}.
\]
But we can estimate this expression by
\begin{equation}\label{eq41}
\limsup_{i^\prime\to-\infty}\frac{\log^+|u_{i^\prime}|}{|T_{i^\prime}(\omega)|}\limsup_{i^\prime\to-\infty}\frac{|T_{i^\prime}(\omega)|}{|i^\prime|}\leq \limsup_{i^\prime\to-\infty}\frac{\log^+|u_{i^\prime}|}{|t_{i^\prime}|}\limsup_{i^\prime\to-\infty}\frac{|T_{i^\prime}(\omega)|}{|i^\prime|}=0,
\end{equation}
due to the fact that $u_{i^\prime}\in D(\theta_{t_{i^\prime} }\omega)$. Let us emphasize that the last factor in the last term is estimated by one by definition of the stopping times.\\

We then can conclude:
\begin{lemma}\label{l15}
For $D\in\dD_{R,V}^0$ we have that
\begin{eqnarray}
    E_D(i,\omega)&=&\overline{\bigcup_{-\tau\in [\hat T(\theta_{T_i(\omega)}\omega),0]}\bigcup_{u_0\in D(\theta_{-\tau}\theta_{T_i(\omega)}\omega)}
    \{\phi(\tau,\theta_{-\tau}\theta_{T_i(\omega)}\omega,u_0)\}}
\end{eqnarray}
defines a set in $\dD_{Z,V}^0$.
In addition,
\[
H_D(i,\omega)=\overline{\bigcup_{\tau\in [0,T(\theta_{T_i(\omega)}\omega)]}\bigcup_{u_0\in D(\theta_{T_i(\omega)}\omega)}
    \{\phi(\tau,\theta_{T_i(\omega)}\omega,u_0)\}}
\]
is in $\dD_{Z,V}^\nu$ provided $D\in\dD_{Z,V}^\nu$.
\end{lemma}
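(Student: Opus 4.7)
Both claims will be proved by enclosing the sets in question in balls $B_V(0,R(i,\omega))$ and checking the growth rate of $R(i,\omega)$ as $i\to-\infty$. The essential ingredient is the defining property of the stopping times (\ref{eq29}), which gives
\[
|||\theta_{T_i(\omega)}\omega|||_{\beta',\hat T(\theta_{T_i(\omega)}\omega),0}\le\mu,\qquad |||\theta_{T_i(\omega)}\omega|||_{\beta',0,T(\theta_{T_i(\omega)}\omega)}\le\mu,
\]
so that on any interval adapted to the stopping times (hence of length at most one, by Lemma \ref{l7}) the $\beta'$-H\"older seminorm of the shifted noise is bounded by $\mu$.

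\textbf{Step 1 (uniform linear a priori bound).} On such an interval, repeating the estimate (\ref{eq36}) together with the argument carried out in the proof of Lemma \ref{l13}, namely $(1-c\mu)\|u\|_{\beta,\sim}\le c|u_0|+c\mu$, produces constants $C_1=C_1(\mu)$, $C_2=C_2(\mu)$, depending only on $\mu$ and on the data $S,G$, such that
\[
|\phi(\tau,\omega',u_0)|\le C_1|u_0|+C_2
\]
whenever $\tau\in[0,1]$ and the noise $\omega'$ has $|||\omega'|||_{\beta',0,\tau}\le\mu$. Note that the constant $c_T$ appearing in (\ref{eq36})--(\ref{eq37}) is uniformly bounded for $T\le 1$, which is what makes the constants independent of the particular interval or base point.

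\textbf{Step 2 (enclosing balls and growth).} For $E_D$, the cocycle property lets us rewrite $\phi(\tau,\theta_{-\tau}\theta_{T_i(\omega)}\omega,u_0)$ as the value at time $T_i(\omega)$ of the solution starting from $u_0$ at time $s:=T_i(\omega)-\tau\in[T_{i-1}(\omega),T_i(\omega)]$ under the original noise $\omega$. Step 1 applies with $C_1|u_0|+C_2\le C_1 r(\theta_s\omega)+C_2$, so
\[
E_D(i,\omega)\subset B_V\bigl(0,R_E(i,\omega)\bigr),\qquad R_E(i,\omega):=C_1\sup_{s\in[T_{i-1}(\omega),T_i(\omega)]}r(\theta_s\omega)+C_2.
\]
Mimicking the reasoning around (\ref{eq41}), the first inequality of (\ref{eq21}) combined with $|T_i(\omega)|\le|i|$ yields $T_{i-1}(\omega)\to-\infty$ with $|T_{i-1}(\omega)|\le|i|$. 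Given $\epsilon>0$, $D\in\dD_{R,V}^0$ supplies $N$ with $\log^+ r(\theta_s\omega)\le\epsilon|s|$ for $|s|\ge N$; then for $i$ sufficiently negative this applies to every $s\in[T_{i-1}(\omega),T_i(\omega)]$, whence
\[
\limsup_{i\to-\infty}\frac{\log^+ R_E(i,\omega)}{|i|}\le\epsilon,
\]
and arbitrariness of $\epsilon$ gives $E_D\in\dD_{Z,V}^0$.

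\textbf{Step 3 ($H_D$).} The same argument applied to $\tau\in[0,T(\theta_{T_i(\omega)}\omega)]$ and the noise $\theta_{T_i(\omega)}\omega$ yields $H_D(i,\omega)\subset B_V(0,C_1 r(i)+C_2)$, where $r(i)$ is the radius function associated with $D\in\dD_{Z,V}^\nu$ (under the natural indexing identification). The hypothesis $\limsup_{i\to-\infty}\log^+ r(i)/|i|<\nu$ then transfers directly to $C_1 r(i)+C_2$, so $H_D\in\dD_{Z,V}^\nu$. The main technical obstacle, addressed in Step 1, is ensuring that the linear a priori constants depend only on $\mu$ and not on the base point $\theta_{T_i(\omega)}\omega$ or on the specific stopping-time slice; uniformity of $c_T$ on intervals of length at most one is what makes this work.
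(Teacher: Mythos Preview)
Your proof is correct and follows essentially the same approach as the paper: both arguments establish the linear a priori bound $|\phi|\le C_1|u_0|+C_2$ on stopping-time intervals via (\ref{eq36}) and the smallness condition $|||\cdot|||_{\beta'}\le\mu$, then reduce the growth check for $E_D$ to that of $\sup_{s\in[T_{i-1}(\omega),T_i(\omega)]}r(\theta_s\omega)$ (the paper's $G(i,\omega)$ from (\ref{eq40b})) using the reasoning of (\ref{eq41}), and handle $H_D$ analogously. The only cosmetic slip is that $|T_{i-1}(\omega)|\le|i-1|$ rather than $|i|$, which of course does not affect the $\limsup$.
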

\begin{proof} First of all, similar to the proof of Lemma \ref{l13} we obtain that
\[
(1-c\mu)\|u\|_{\beta,\sim, 0, T(\theta_{T_{j-1}(\omega)} \omega)}\le c( |u_0|+\mu ).
\]
On the other hand, for $-s\in [\hat T(\theta_{T_i(\omega)}\omega),0]$, due to  (\ref{eq36}) we have
\begin{eqnarray*}
    |\phi(s,\theta_{-s}\theta_{T_{i}(\omega)}\omega,u_0)|&\le &c |u_0|+\bigg|\int_0^s S(s-r)G(u(r))d\theta_{-s}\theta_{T_{i}(\omega)}\omega\bigg|\\
    &\le &  c|u_0|+c(1+\|u\|_{\beta, \sim, 0,T(\theta_{T_i(\omega)}\omega)})|||\theta_{-s}\theta_{T_{i}(\omega)}\omega|||_{\beta^\prime,0,s},
\end{eqnarray*}
and because $|||\theta_{-s}\theta_{T_{i}(\omega)}\omega|||_{\beta^\prime,0,s}\leq |||\theta_{T_{i}(\omega)}\omega|||_{\beta^\prime,\hat T(\theta_{T_i(\omega)}\omega),0}\le \mu$, we obtain that
\begin{equation}\label{eq211}
|\phi(s,\theta_{-s}\theta_{T_{i}(\omega)}\omega,u_0)|\le c |u_0|+c\mu \bigg(1+\frac{c}{1-c \mu} |u_0|+\frac{c\mu}{1-c\mu}\bigg).
\end{equation}
Hence the norm of any element in $E(i,\omega)$ is bounded by
\[
\sup_{u_0\in G(i,\omega)}c|u_0|\bigg(1+\frac{c\mu}{1-c \mu}\bigg)+c\mu \bigg(1+\frac{c\mu}{1-c\mu}\bigg)
\]
for $G(i,\omega)$ introduced in (\ref{eq40b}), which gives the desired property.\\
The second property follows similarly.\qquad
\end{proof}\\

\begin{lemma}\label{t5}
Under the conditions of Theorem \ref{t4}, define the family of sets $\aA$ by $\aA(\omega):=\aA(0,\omega)$, where $\aA(i,\omega)$ has been defined in Theorem \ref{t4}. Then $\aA$ is invariant and attracting for elements from $\dD_{R,V}^0$ with respect to $\phi$.
\end{lemma}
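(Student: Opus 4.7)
The plan is to establish that $\aA(\omega):=\aA(0,\omega)$ is compact, invariant under $\phi$, and pullback-attracting for the class $\dD_{R,V}^0$. Compactness is inherited directly from Theorem \ref{t4}, since $\aA(0,\omega)$ sits inside the compact $\Phi$-absorbing set built there, so the two substantive tasks are continuous-time attraction and continuous-time invariance.

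For attraction I would bridge continuous and discrete times via the stopping-time structure. Given $D\in\dD_{R,V}^0$ and $t>0$ large, pick the unique index $i=i(t)\in Z$ with $i\leq 0$ and $-t\in(T_{i-1}(\omega),T_i(\omega)]$; then $i(t)\to-\infty$ as $t\to\infty$ and $\sigma:=T_i(\omega)+t\in[0,T_i(\omega)-T_{i-1}(\omega)]$. The cocycle property of $\phi$ factors
\[
\phi(t,\theta_{-t}\omega,u)=\phi(|T_i(\omega)|,\theta_{T_i(\omega)}\omega,\phi(\sigma,\theta_{-t}\omega,u))=\Phi(-i,i,\omega,\phi(\sigma,\theta_{-t}\omega,u)),
\]
and comparison with the definition of $E_D$ in Lemma \ref{l15} gives $\phi(t,\theta_{-t}\omega,D(\theta_{-t}\omega))\subseteq \Phi(-i,i,\omega,E_D(i,\omega))$. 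By that same lemma $E_D\in\dD_{Z,V}^0\subseteq\dD_{Z,V}^\nu$, so the pullback attracting property of the discrete attractor from Theorem \ref{t4} delivers ${\rm dist}(\phi(t,\theta_{-t}\omega,D(\theta_{-t}\omega)),\aA(\omega))\to 0$ as $t\to\infty$.

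For invariance I would run the standard pullback-attractor argument in both directions. For the inclusion $\phi(t,\omega,\aA(\omega))\subseteq\aA(\theta_t\omega)$, represent $u_0\in\aA(\omega)$ as a limit $u_0=\lim_n\phi(s_n,\theta_{-s_n}\omega,v_n)$ with $s_n\to\infty$ and $(v_n)$ drawn from a family in $\dD_{R,V}^0$ (available from the $\omega$-limit characterisation of $\aA$ together with the continuous/discrete bridge provided by Lemma \ref{l15}); the cocycle property and continuity of $\phi(t,\omega,\cdot)$ then give $\phi(t,\omega,u_0)=\lim_n\phi(t+s_n,\theta_{-(t+s_n)}\theta_t\omega,v_n)$, which lies in the closed set $\aA(\theta_t\omega)$ by the attracting property already established, applied now at $\theta_t\omega$. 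For the reverse inclusion, write $w\in\aA(\theta_t\omega)$ in the same form with $s_n>t$, factor $\phi(s_n,\theta_{-s_n}\theta_t\omega,v_n)=\phi(t,\omega,u_n)$ with $u_n:=\phi(s_n-t,\theta_{-(s_n-t)}\omega,v_n)$, extract via attraction and compactness of $\aA(\omega)$ a subsequence $u_n\to u_0\in\aA(\omega)$, and conclude $w=\phi(t,\omega,u_0)$ by continuity of $\phi(t,\omega,\cdot)$.

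The step I expect to be most delicate is obtaining the continuous-time absorbing family in $\dD_{R,V}^0$ whose pullback images exhaust $\aA(\omega)$, since Lemma \ref{l13} only provides a discrete-time absorbing set indexed by the stopping times. The factorisation through $\Phi$ used for the attraction step is precisely the device that translates this discrete datum into the continuous statement needed, and with it in hand both inclusions of invariance reduce to the routine pullback-attractor argument sketched above.
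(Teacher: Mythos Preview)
Your attraction argument is essentially the paper's: factor $\phi(t,\theta_{-t}\omega,\cdot)$ through the discrete cocycle $\Phi(-i,i,\omega,\cdot)$ via the stopping-time index $i=i(t)$, trap the short initial segment in $E_D(i,\omega)\in\dD_{Z,V}^0\subset\dD_{Z,V}^\nu$ by Lemma~\ref{l15}, and invoke the discrete pullback attraction from Theorem~\ref{t4}. That part is fine.

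The invariance argument, however, has a real gap. Your ``standard pullback-attractor argument'' needs that the sequence $(v_n)$ you extract from the $\omega$-limit description of $\aA(\omega)$ lies in some family of $\dD_{R,V}^0$, so that the continuous-time attraction you just proved can be applied at $\theta_t\omega$ to place the limit in $\aA(\theta_t\omega)$. But the only source for such $v_n$ is the discrete absorbing set $B\in\dD_{Z,V}^\nu$ from Lemma~\ref{l13} (with $\nu>0$), and there is no mechanism to promote this to a $\dD_{R,V}^0$ family in continuous time. Indeed, the remark immediately after the lemma in the paper says exactly this: one only knows $\aA\in\dD_{R,V}^{\nu/\check d}$, and one \emph{cannot} show that $\aA$ attracts this larger class. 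So the step ``which lies in the closed set $\aA(\theta_t\omega)$ by the attracting property already established'' does not go through. The same obstruction blocks your reverse inclusion.

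There is a second, related difficulty you do not address: $\aA(0,\omega)$ and $\aA(0,\theta_t\omega)$ are built from \emph{different} stopping-time grids $(T_i(\omega))_i$ and $(T_j(\theta_t\omega))_j$, and for generic $t$ these interleave rather than coincide. The paper handles both issues simultaneously by staying entirely inside the discrete framework (where $\aA(\cdot,\omega)\in\dD_{Z,V}^\nu$ \emph{is} in the attracted class). Using the ordering property (\ref{eq33}) from Lemma~\ref{l12} to compare the two grids, it constructs auxiliary families
\[
\tilde\aA(j,\theta_t\omega)=\phi\bigl(t-T_{i'+j}(\omega)+T_j(\theta_t\omega),\theta_{T_{i'+j}(\omega)}\omega,\aA(i'+j,\omega)\bigr)
\]
and a symmetric $\hat\aA$, shows each is compact, $\Phi$-invariant, and in $\dD_{Z,V}^\nu$ (via the second part of Lemma~\ref{l15}), and hence contained in the corresponding discrete attractor by uniqueness. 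A sandwich then gives $\tilde\aA(0,\theta_t\omega)=\aA(0,\theta_t\omega)$, and unwinding the definition yields $\phi(t,\omega,\aA(\omega))=\aA(\theta_t\omega)$. This grid-comparison step is the substantive content your sketch is missing.
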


\begin{proof}
For $t>0$ there exists a unique $i^\ast=i^\ast(\omega)\in Z^-$ such that $-t\in ( T_{i^\ast(\omega)-1}(\omega), T_{i^\ast(\omega)}(\omega)]$. Because of $T_{-i^\ast}(\theta_{ T_{i^\ast}}\omega)=- T_{i^\ast}(\omega)$, due also to the cocycle property and the relationship between $\phi$ and $\Phi$, we can conclude that for $D\in \dD_{R,V}^0$
\begin{eqnarray*}
  & & \phi(t,\theta_{-t}\omega,D(\theta_{-t}\omega))\\
  &= &\phi(T_{-i^\ast}(\theta_{ T_{i^\ast}}\omega),\theta_{T_{i^\ast}(\omega)}\omega,
    \phi(t-T_{-i^\ast}(\theta_{ T_{i^\ast}}\omega),\theta_{-t- T_{i^\ast}}\theta_{ T_{i^\ast}}\omega,D(\theta_{-t- T_{i^\ast}}\theta_{T_{i^\ast}}\omega)))\\
    &=&\Phi(-i^\ast,i^\ast, \omega,
   \phi(t-T_{-i^\ast}(\theta_{ T_{i^\ast}}\omega),\theta_{-t- T_{i^\ast}}\theta_{ T_{i^\ast}}\omega,D(\theta_{-t- T_{i^\ast}}\theta_{ T_{i^\ast}}\omega)))\\
    &\subset & \Phi(-i^\ast,i^\ast, \omega,E_{D}(i^\ast,\omega))
\end{eqnarray*}
where $E_D\in \dD_{Z,V}^0$ is defined in Lemma \ref{l15} and that can be also written as
\[
E_D(i,\omega)=\overline{\bigcup_{\tau\in [T_{i-1}(\omega),T_{i}(\omega)]}\bigcup_{u_0\in D(\theta_{\tau}\omega)}
    \{\phi(T_i(\omega)-\tau,\theta_{\tau-T_i(\omega)}\theta_{T_i(\omega)}\omega,u_0)\}}.
\]
Hence $\aA$ attracts $D$ with respect to $\phi$ since
\[
    0 \le{\rm dist}(\phi(t,\theta_{-t}\omega,D(\theta_{-t}\omega)),\aA(\omega))\le {\rm dist}(\Phi(-i,i,\omega,E_D(i,\omega)),\aA(0,\omega)),
\]
and
\[
    \lim_{i\to-\infty}{\rm dist}(\Phi(-i,i,\omega,E_D(i,\omega)),\aA(0,\omega))=0.
\]
Now we prove the invariance of $\aA$.
For $t>0$ we denote by $i^\prime(\omega)$ the largest $i$ such that $T_{i^\prime}(\omega)< t$
for any $\omega\in\Omega$.  Hence by (\ref{eq33}) for $t_2=t,\,t_1=T_{i^\prime}(\omega)$
\begin{eqnarray*}
  \cdots &\le& T_{i^\prime-1}(\omega) <\ T_{-1}(\theta_t\omega)+t\le T_{i^\prime}(\omega)<  t \\
  &\le& T_{i^\prime+1}(\omega)< T_1(\theta_t\omega)+t\le T_{i^\prime+2}(\omega)< T_2(\theta_t\omega)+t\le\cdots.
\end{eqnarray*}
Since $(\aA(i^\prime+j,\omega))_{j\in Z}$ is pullback attracting with respect to $\Phi$, by (\ref{eq32}) the same property is true for the following compact set
\[
    \tilde \aA(j,\theta_t\omega):=\phi(t-T_{i^\prime+j}(\omega)+T_j(\theta_t\omega),\theta_{T_{i^\prime+j}(\omega)}\omega,\aA(i^\prime+j,\omega)).
\]
Because of $|t-T_{i^\prime+j}(\omega)+T_j(\theta_t\omega)|\le 1$ and $\{\aA(j,\omega)\}_{j\in Z}\in\dD_{Z,V}^\nu$, applying the second part of Lemma \ref{l15} we have that $\tilde \aA(j,\theta_t\omega)\in \dD_{Z,V}^\nu$.
Furthermore
\begin{eqnarray*}
   & & \Phi(1,j,\theta_t\omega,\tilde A(j,\theta_t\omega))\\
   &=&\phi(T(\theta_{T_j(\theta_t\omega)}\theta_t\omega),\theta_{T_j(\theta_t\omega)}\theta_t\omega,
    \phi(t+T_j(\theta_t\omega)-T_{i^\prime+j}(\omega),\theta_{T_{i^\prime+j}(\omega)}\omega,\aA(i^\prime+j,\omega)))\\
    &=&\phi(t+T_{j+1}(\theta_t\omega)-T_{i^\prime+j}(\omega),\theta_{T_{i^\prime+j}(\omega)}\omega,\aA(i^\prime+j,\omega))\\
    &=&\phi(t+T_{j+1}(\theta_t\omega)-T_{i^\prime+j+1}(\omega),\theta_{T_{i^\prime+j+1}(\omega)}\omega,\aA(i^\prime+j+1,\omega))=\tilde \aA(j+1,\theta_t\omega)
\end{eqnarray*}
which shows that $\tilde \aA(\cdot,\theta_t\omega)$ is a compact {\em invariant} set in $\dD^\nu_{Z,V}$ for the cocycle $\Phi(\cdot,\cdot,\theta_t\omega,\cdot)$. Since $\tilde \aA(\cdot,\theta_t\omega)\in \dD^\nu_{Z,V}$ it is attracted by $\aA(\cdot,\theta_t\omega)$, and due to the invariance of the former
\begin{eqnarray}\label{a1}
\tilde \aA(j,\theta_t\omega)\subset \aA(j,\theta_t\omega).
\end{eqnarray}
Now we define
\[
    \hat \aA(i^\prime+j+1,\omega):=\phi(T_{i^\prime+j+1}(\omega)-t-T_j(\theta_t\omega),\theta_{T_j(\theta_t\omega)}\theta_t\omega,\aA(j,\theta_t\omega)),
\]
which implies that $\hat \aA$ has qualitatively the same properties as $\tilde \aA$. In particular, $\hat \aA(j,\omega)\subset \aA(j,\omega)$. But we have
\begin{eqnarray}\label{a2}
    && \aA(\theta_t\omega)=\aA(0,\theta_t\omega)\nonumber\\
    &=&\Phi(1,-1,\theta_t\omega,\aA(-1,\theta_t\omega))= \phi(t-T_{i^\prime}(\omega),\theta_{T_{i^\prime}(\omega)}\omega,\hat \aA(i^\prime(\omega),\omega))\\
     &\subset &\phi(t-T_{i^\prime}(\omega),\theta_{T_{i^\prime}(\omega)}\omega, \aA(i^\prime(\omega),\omega))=\tilde \aA(0,\theta_t\omega)\nonumber
\end{eqnarray}
such that, by (\ref{a1}) and (\ref{a2}), $\tilde \aA(0,\theta_t\omega)=\aA(\theta_t\omega)$ and
\[
    \phi(t,\omega,\aA(\omega))=\phi(t-T_{i^\prime}(\omega),\theta_{T_{i^\prime}(\omega)}\omega,\Phi(i^\prime,0,\omega,\aA(0,\omega))=\aA(\theta_t\omega),
\]
hence $\aA$ is invariant. \qquad \end{proof}

\vskip.5cm

We would like to emphasize the following: the set ${\cal A}$ is {\em only} called attracting because it is contained in  ${\cal D}_{R,V}^{\frac{\nu}{\check d}}$, with $\check d$ given by (\ref{dd}) below, but we cannot prove that $\aA$ attracts the sets of this family. Let us explain these statements with more details. Consider
\begin{equation}\label{dd}
1\ge \hat d:=\limsup_{i\to-\infty}\frac{|T_i(\omega)|}{|i|}>\liminf_{i\to-\infty}\frac{|T_i(\omega)|}{|i|}=:{\check d.}
\end{equation}
First ${\cal A}\in {\cal D}_{R,V}^{\frac{\nu}{\check d}}$, since as $\aA(\theta_{T_i(\omega)}\omega) =\aA(i,\omega)$, we obtain
\begin{eqnarray*}
   &&\limsup_{i\to-\infty}\frac{\sup_{u\in \aA(\theta_{T_i(\omega)}\omega)}|u|}{|T_i(\omega)|}\le
\limsup_{i\to-\infty}\frac{\sup_{u\in \aA(i,\omega)}|u|}{|i|}\limsup_{i\to-\infty}\frac{|i|}{|T_i(\omega)|} \le \frac{\nu}{\check d}
\end{eqnarray*}
and then the property follows.
Secondly, we can assume that there exists a set $D\in {\cal D}_{R,V}^{\frac{\nu}{\check d}}$ and an small $\eps>0$ such that 
\[
\limsup_{i\to-\infty}\frac{\sup_{u\in D(\theta_{T_i(\omega)}\omega)}|u|}{|i|}=
\limsup_{i\to-\infty}\frac{\sup_{u\in D(\theta_{T_i(\omega)}\omega)}|u|}{|T_i(\omega)|}\limsup_{i\to-\infty}\frac{|T_i(\omega)|}{|i|}=\frac{(\nu-\eps) \hat d}{\check d}
\]
which is equal to $(\nu-\eps)$ if and only $\hat d=\check d$. Therefore, if we do not assume this last equality we get a contradiction (since the pullback attractor of $\Phi$ attracts sets $D\in {\cal D}_{R,V}^{\nu}$) and hence we cannot claim that $\aA$ attracts the sets of ${\cal D}_{R,V}^{\frac{\nu}{\check d}}$.

\section{Attractors for random dynamical systems of SPDEs with fractional Brownian motion}

In this section we study the non-autonomous dynamical system under measurability assumptions. In particular, we need to introduce a metric dynamical system. It will be crucial that the  integrals with H{\"o}lder continuous integrators are  defined {\em pathwise}. This is a qualitative difference
to the definition of the classical stochastic integral where the integrand is a white noise. We recall that the pathwise definition of the former integral just gave us the non--autonomous dynamical system $\varphi$ from Section \ref{s3}.

A one dimensional fBm is a centered Gau{\ss}-process on $R$ with {\em Hurst parameter} $H\in(0,1)$ having the covariance

\[
    R(s,t)=\frac12(|t|^{2H}+|s|^{2H}-|t-s|^{2H})
\]
defined on an appropriate probability space.
Similarly we can define an fBm with values in $V$ and covariance $Q$, where $Q$ is a positive symmetric operator on $V$ of trace class.
It is known that $Q$ has a discrete spectrum $(q_i)_{i\in N}$ related to the complete orthonormal system
in $V$ given by $(f_i)_{i\in N}$.  This process has a version $\omega$ in $C_0(R,V)$, the space of continuous functions which are zero at zero, and $q_i^{-\frac12}\pi_i\omega$ are iid one dimensional fBm where $\pi_i$ is the
projection on the $i$-th mode of the base $(f_i)_{i\in N}$, such that we can define the integrals as in Subsection \ref{ss2.2}.
For simplicity we identify in the following the base $(f_i)_{i\in N}$ with the base $(e_i)_{i\in N}$ .

In what follows we consider a canonical version of this process given by the probability space $(C_0(R,V),\bB(C_0(R,V)),P)$ where $P$ is the Gau{\ss}-measure generated by the fBm.
On this probability space we can also introduce the  shift $\theta_t\omega(\cdot)=\omega(\cdot+t)-\omega(t)$.\\

\begin{lemma}\label{l4}
$(C_0(R,V),\bB(C_0(R,V)),P,\theta)$ is an ergodic metric dynamical system.
\end{lemma}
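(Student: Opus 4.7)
The plan is to verify the four defining properties of an ergodic metric dynamical system in order: measurability of $\theta$, the flow (group) property, invariance of $P$, and ergodicity.

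First I would establish measurability and the group law. Since $\theta_t$ is obtained by the continuous operations of precomposition with the shift $s\mapsto s+t$ and subtraction of the evaluation $\omega(t)$, the mapping $R\times C_0(R,V)\to C_0(R,V)$, $(t,\omega)\mapsto\theta_t\omega$, is continuous, hence Borel measurable. For the group law, a direct computation gives
\begin{eqnarray*}
(\theta_t\circ\theta_s\omega)(r)&=&(\theta_s\omega)(r+t)-(\theta_s\omega)(t)\\
&=&\omega(r+t+s)-\omega(s)-\omega(t+s)+\omega(s)\\
&=&\omega(r+t+s)-\omega(t+s)=(\theta_{t+s}\omega)(r),
\end{eqnarray*}
and $\theta_0\omega=\omega$ is trivial.

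Next I would prove invariance of $P$ under $\theta_t$. Writing $\omega=\sum_i q_i^{1/2}\beta_i\,e_i$ where $\beta_i$ are i.i.d.\ real fBms with Hurst parameter $H$, the shifted process $\theta_t\omega$ has the same decomposition but with $\beta_i(\cdot)$ replaced by $\beta_i(\cdot+t)-\beta_i(t)$. Since both $\theta_t\omega$ and $\omega$ are centered Gaussian elements of $C_0(R,V)$ (both are zero at zero), it suffices to check they share the same covariance operator. Using the stationarity of increments of one-dimensional fBm, namely
$$E[(\beta_i(r+t)-\beta_i(t))(\beta_i(s+t)-\beta_i(t))]=\tfrac12(|r|^{2H}+|s|^{2H}-|r-s|^{2H})=E[\beta_i(r)\beta_i(s)],$$
together with the independence across indices $i$, one sees that $\omega$ and $\theta_t\omega$ have the same finite-dimensional distributions. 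Hence $\theta_tP=P$.

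The main obstacle is ergodicity. I would exploit the Gaussian structure: for a centered stationary Gaussian flow, the classical Maruyama criterion states that the shift is ergodic (in fact mixing) if and only if the spectral measure of the process has no atoms. Each coordinate process $\beta_i$ is a fractional Brownian motion with absolutely continuous spectral measure (the spectral density on $R$ being proportional to $|\xi|^{1-2H}$), so each one-dimensional coordinate shift is mixing, and by independence of the components combined with the fact that mixing is preserved under countable products of mixing Gaussian systems, the product shift on $C_0(R,V)$ is mixing, hence ergodic. Alternatively one may invoke directly the corresponding statement for infinite-dimensional Gaussian flows (see, e.g., Maslowski--Nualart \cite{MasNua03} for precisely this setup), where the condition reduces to $Q$ being of trace class, which is assumed here. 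This completes the verification that $(C_0(R,V),\mathcal{B}(C_0(R,V)),P,\theta)$ is an ergodic metric dynamical system.
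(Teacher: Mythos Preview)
The paper does not give its own proof of this lemma; it simply defers to Maslowski--Schmalfu{\ss} \cite{MasSchm04} and to \cite{GS11}. Your verification of measurability, the flow identity, and $\theta_t$-invariance of $P$ via the stationarity of the increments is correct and is essentially what those references do for these parts.

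The ergodicity step, however, has a genuine gap. Maruyama's criterion (non-atomic spectral measure $\Rightarrow$ mixing) is a statement about \emph{stationary} Gaussian processes, i.e.\ processes whose covariance depends only on $t-s$. Fractional Brownian motion is not stationary; only its increments are. Consequently there is no ``spectral measure of the process'' to which you can apply the criterion as written, and the claimed density $|\xi|^{1-2H}$ is not the spectral density of a stationary process but the weight in the harmonizable representation of $B^H$. What is actually done in \cite{MasSchm04} and \cite{GS11} is to reduce the question to the shift on the \emph{increment} process (fractional Gaussian noise), which is a genuine stationary Gaussian sequence/process; one then checks that its correlation function vanishes at infinity (it decays like $|n|^{2H-2}$), which yields mixing for the increment shift and transfers back to ergodicity of $(\theta_t)$ on $C_0(R,V)$. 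Your argument is missing this passage to the increment process.

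A minor point: the reference you cite for the infinite-dimensional case, Maslowski--Nualart \cite{MasNua03}, does not contain this ergodicity statement; the correct references are \cite{MasSchm04} and \cite{GS11}, exactly the ones the paper invokes.
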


The proof of this lemma can be found in \cite{MasSchm04} and with a deeper analysis in \cite{GS11}.\\

This (canonical) process has a version which is ${\beta^{\prime\prime}}$-H{\"o}lder continuous on any interval $[-k,k]$  for $\beta^{\prime\prime}<H$.

Let us denote by $\Omega_{\beta^{\prime\prime}}\subset C_0(R,V)$
the set of elements which are $\beta^{\prime\prime}$-H{\"o}lder continuous on any interval $[-k,k],\,k\in N$, and are zero at $0$.

\begin{lemma} (see \cite{ChGGSch12})
We have $\Omega_{\beta^{\prime\prime}}\in\bB(C_0(R,V))$. In addition, $\Omega_{\beta^{\prime\prime}}$ is $(\theta_t)_{t\in R}$-invariant.
\end{lemma}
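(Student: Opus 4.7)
The plan is to handle the two assertions separately. For the measurability, I would rewrite $\Omega_{\beta^{\prime\prime}}$ as a countable Boolean combination of sets that are manifestly Borel in $C_0(R,V)$ endowed with the topology of uniform convergence on compacts. For the shift-invariance, I would check the two defining conditions of $\Omega_{\beta^{\prime\prime}}$ on $\theta_t\omega$ using only the Hölder regularity of $\omega$ on a sufficiently large compact.

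First I would write
\[
\Omega_{\beta^{\prime\prime}}=\{\omega\in C_0(R,V):\omega(0)=0\}\cap\bigcap_{k\in N}\bigcup_{N\in N}A_{k,N},
\]
where $A_{k,N}=\{\omega:|||\omega|||_{\beta^{\prime\prime},-k,k}\le N\}$. The condition $\omega(0)=0$ is trivially satisfied on $C_0(R,V)$. For $A_{k,N}$ I would use the fact that point evaluations $\omega\mapsto\omega(t)$ are continuous on $C_0(R,V)$, so the map
\[
\omega\mapsto\frac{|\omega(t)-\omega(s)|}{|t-s|^{\beta^{\prime\prime}}}
\]
is continuous for every fixed pair $s\neq t$. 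Continuity of $\omega$ implies that the Hölder seminorm on $[-k,k]$ equals the supremum of these quotients over $s,t\in\mathbb Q\cap[-k,k]$, $s\ne t$. Since this is a countable supremum of continuous functions, $\omega\mapsto|||\omega|||_{\beta^{\prime\prime},-k,k}$ is Borel-measurable (with values in $[0,\infty]$), and hence $A_{k,N}$ is Borel. This gives $\Omega_{\beta^{\prime\prime}}\in\bB(C_0(R,V))$.

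For the shift-invariance I would fix $t\in R$ and $\omega\in\Omega_{\beta^{\prime\prime}}$ and set $\tilde\omega=\theta_t\omega$. Then $\tilde\omega(0)=\omega(t)-\omega(t)=0$, so $\tilde\omega\in C_0(R,V)$. Given $k\in N$, choose $k^\prime\in N$ with $k^\prime\ge k+|t|$. For $s_1,s_2\in[-k,k]$ one has $s_1+t,s_2+t\in[-k^\prime,k^\prime]$, and
\[
|\tilde\omega(s_2)-\tilde\omega(s_1)|=|\omega(s_2+t)-\omega(s_1+t)|\le|||\omega|||_{\beta^{\prime\prime},-k^\prime,k^\prime}|s_2-s_1|^{\beta^{\prime\prime}},
\]
so $|||\tilde\omega|||_{\beta^{\prime\prime},-k,k}\le|||\omega|||_{\beta^{\prime\prime},-k^\prime,k^\prime}<\infty$. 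Since $k$ was arbitrary, $\tilde\omega\in\Omega_{\beta^{\prime\prime}}$, hence $\theta_t\Omega_{\beta^{\prime\prime}}\subset\Omega_{\beta^{\prime\prime}}$; the reverse inclusion follows by applying the same argument to $\theta_{-t}$.

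I do not expect any real obstacle here: the measurability step is the standard ``Hölder norm is Borel via countable rational sup'' argument, while the invariance step is essentially a relabelling of the time interval. The only point to watch is to justify that the supremum defining $|||\omega|||_{\beta^{\prime\prime},-k,k}$ over all real $(s,t)$ coincides with its restriction to rationals, which follows from the continuity of $\omega$.
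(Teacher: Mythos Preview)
Your argument is correct. The paper does not actually prove this lemma in the text; it simply cites \cite{ChGGSch12} for the proof, so there is no in-paper argument to compare against. Your approach---expressing the H\"older seminorm on $[-k,k]$ as a countable supremum over rational pairs of continuous evaluation maps to get Borel measurability, and then checking shift-invariance by enlarging the compact interval---is the standard route and exactly what one would expect the cited reference to contain.
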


Choose $\mu,\,\nu$ such that (\ref{eq21}) holds. Then by the ergodic theory there exists
a $(\theta_t)_{t\in R}$--invariant set $\Omega_1\in\bB(C_0(R,V))$ of full measure such that

\begin{eqnarray}
  &&  \lim_{t\to\pm\infty}\frac{1}{t}\int_0^t \bigg(\frac{\sup_{r\in[0,1]}|||\theta_{r+q}\omega|||_{\beta^{\prime\prime},-1,0}+\mu}{\mu}\bigg)^{\frac{1}{\beta^{\prime\prime}-\beta^\prime}}dq \nonumber \\[-1.5ex]
    \label{eq35}\\[-1.5ex]
&=& E_P \bigg(\frac{\sup_{r\in[0,1]}|||\theta_r\omega|||_{\beta^{\prime\prime},-1,0}+\mu}{\mu}\bigg)^{\frac{1}{\beta^{\prime\prime}-\beta^\prime}}=:d^{-1}<\infty.\nonumber
\end{eqnarray}
For the construction of the invariant set $\Omega_1$ we refer to Arnold \cite{Arn98} Page 538 {\em Ergodic Theorem (iv)}.
The above expectation can be controlled by choosing a $Q$ with small trace such that $d$ is close to 1, see for instance the construction of the random variable $K$ in Kunita \cite{Kunita90} Theorem 1.4.1.
Let us also choose ${\rm tr}\,Q$ small enough such that (\ref{eq39}) holds true, i.e.
\[
    \nu+d>1.
\]
 In addition, there exists a $(\theta_t)_{t\in R}$-invariant set $\Omega_2\in\bB(C_0(R,V))$ of full measure such that $R\ni t\mapsto\sup_{r\in[0,1]}|||\theta_{t+r}\omega|||_{\beta^{\prime\prime},-1,0}$ has a sublinear growth, which follows by  Arnold \cite{Arn98} Proposition 4.1.3.
Indeed, $\sup_{r\in[0,1]}|||\theta_r\omega|||_{\beta^{\prime\prime},-1,0}\le |||\omega|||_{\beta^{\prime\prime},-1,1}$ where the right hand side  of this inequality has finite moments of any order, by Kunita \cite{Kunita90} Theorem 1.4.1.

Setting $\Omega=\Omega_1\cap\Omega_2\cap\Omega_{\beta^{\prime\prime}}$, let $\fF$ be the trace-$\sigma$-algebra of $\bB(C_0(R,V))$ with respect to $\Omega$, and let us consider
the restriction of the measure $P$ to this $\sigma$-algebra, which we denote again by $P$. Since
$\Omega$ is $(\theta_t)_{t\in R}$-invariant it is not hard to see that the restriction of $\theta$ to $R\times\Omega$ is a random flow, and that $(\Omega,\fF,P,\theta)$ is an ergodic metric dynamical system, see \cite{CGSV10} for a general proof of these properties.

We are now in a position to derive from the non--autonomous dynamical system $\phi$ a random dynamical system.

\begin{lemma}\label{l17}
(i) Let $\phi$ be the solution map to (\ref{eq7}). Then this mapping is $\bB(R^+)\otimes\fF\otimes \bB(V),\bB(V)$ measurable.

(ii) The stopping times $T,\, \hat T$ are measurable.
\end{lemma}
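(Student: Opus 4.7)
For part (ii), the plan is to first establish the measurability of the map $\omega \mapsto |||\omega|||_{\beta',0,\tau}$ for each fixed $\tau > 0$. Because every $\omega \in \Omega$ is continuous, the seminorm coincides with the supremum of $|\omega(r)-\omega(s)|/|r-s|^{\beta'}$ taken over pairs of rationals $0 \leq s < r \leq \tau$; each such functional is continuous on $C_0(R,V)$, so the countable supremum is Borel measurable and remains so on $\Omega$. By Lemma \ref{l7} the map $\tau \mapsto g_\omega(\tau) := |||\omega|||_{\beta',0,\tau} + \mu\tau^{1-\beta'}$ is continuous and strictly increasing with $g_\omega(0)=0$ and $g_\omega(\tau) \to \infty$, so $\{T(\omega) \leq t\} = \{g_\omega(t) \geq \mu\}$ is measurable for every $t > 0$. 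The same reasoning, applied to $|||\omega|||_{\beta',\tau,0}$, handles $\hat T$.

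For part (i), I would represent $\phi(t,\omega,u_0)$ as the pointwise limit of the Picard iterates
\[
u^{(0)}(t,\omega,u_0)=S(t)u_0,\qquad u^{(n+1)}(t,\omega,u_0)=S(t)u_0+\int_0^t S(t-r) G(u^{(n)}(r,\omega,u_0))\,d\omega(r),
\]
and inductively verify that each $u^{(n)}$ is $\bB(R^+)\otimes\fF\otimes\bB(V),\bB(V)$-measurable. The inductive step reduces to the following question, which I expect to be the main technical obstacle: if $K:[0,T]\times\Omega\to L_2(V)$ is jointly measurable and pathwise $\beta$-H\"older continuous in $t$, is the map $\omega\mapsto\int_0^t S(t-r) K(r,\omega)\,d\omega(r)$ $\fF$-measurable?

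To resolve this, I would work with the explicit representation (\ref{eq3}) in terms of Weyl fractional derivatives. For fixed $r \in (T_1,T_2)$, the maps $\omega\mapsto D_{T_1+}^{\alpha}(S(t-\cdot)K(\cdot,\omega))[r]$ and $\omega\mapsto D_{T_2-}^{1-\alpha}\omega_{T_2-}[r]$ are each limits (as the upper bound of integration tends to $r$ or $T_2$) of integral functionals that are continuous in $\omega$ with respect to the $\beta^{\prime\prime}$-H\"older topology on compact subintervals; hence they are Borel measurable, and jointly measurable in $(r,\omega)$ by Fubini. Lemma \ref{l18} furnishes an integrable upper bound on the integrand, so dominated convergence transfers measurability from truncated integrals (obtained as limits of Riemann sums, which are trivially measurable) to the full integral.

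Finally, to pass from iterate measurability to measurability of $\phi$, I would decompose $\Omega$ into the measurable layers $\Omega_N=\{\omega : |||\omega|||_{\beta^{\prime\prime},0,T}\leq N\}$. On each $\Omega_N$, choosing $\rho=\rho(N,T)$ large enough in the equivalent norm (\ref{eq38}) makes the fixed-point operator from Theorem \ref{t11} a strict contraction with a constant bounded away from $1$ uniformly in $\omega \in \Omega_N$, so $u^{(n)}\to\phi$ uniformly on $\Omega_N$ and on bounded sets of initial data. Joint measurability is preserved under pointwise limits, and gluing over $N$ and over a countable exhaustion of $R^+$ by intervals $[0,T]$ yields the required measurability of $\phi$ on all of $R^+\times\Omega\times V$.
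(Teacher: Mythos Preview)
Your argument for part (ii) is essentially identical to the paper's: both reduce to the measurability of $\omega\mapsto|||\omega|||_{\beta',0,\tau}$ and then read off $T$ (and $\hat T$) from level sets.

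For part (i) the overall strategy also matches the paper's---represent the solution as a limit of Picard iterates and check that the fractional-derivative integral (\ref{eq3}) preserves measurability---but the technical implementation differs in two respects. First, the paper does \emph{not} use your layer decomposition $\Omega_N$ together with a large $\rho$ in (\ref{eq38}); instead it runs the contraction on the random subintervals $[T_i(\omega),T_{i+1}(\omega)]$ (where $|||\omega|||_{\beta'}\le\mu$ automatically) and then concatenates the finitely many pieces into one approximation on $[0,T]$ converging in $C([0,T];V)$. Your route avoids the concatenation step and the dependence on the stopping times, at the cost of having to justify that $\rho$ can be chosen uniformly on each $\Omega_N$. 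Second, to obtain \emph{joint} measurability in $(t,\omega,u_0)$ the paper argues separately that $u_0\mapsto\phi(t,\omega,u_0)$ is continuous (uniform contraction constants for nearby initial data) and then invokes Castaing--Valadier \cite{CasVal77} to upgrade $\omega$-measurability plus $(t,u_0)$-continuity to joint measurability. Your proposal tracks only $\omega$-measurability of the integral explicitly and leaves the passage to joint measurability in $(t,\omega,u_0)$ implicit; this is fillable by the same Carath\'eodory-type argument, but it is worth noting that the paper makes that step explicit rather than folding it into the inductive hypothesis on the iterates.
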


\begin{proof}
First note that  $(r,\omega)\mapsto \omega(r)$ is measurable such that $(r,\omega)\mapsto D_{T-}^{1-\alpha}\omega_{T-}[r]$ is measurable.
Assuming in addition that $(r,\omega)\mapsto k(r,\omega)\in L_2(V)$ is measurable and that for any $\omega$ the product $\|D_{0+}^\alpha k(\cdot,\omega)[r]\|_{L_2(V)}|D_{T-}^{1-\alpha}\omega_{T-}[r]|$ is integrable, then the stochastic integral

\[
    \omega\mapsto\int_0^TS(T-r)G(u(r,\omega))d\omega
\]
is measurable, since $k(r,\omega)=G(u(r,\omega))$ has the above properties.

Since the solution $u$ of (\ref{eq7}) is given by the Banach fixed point theorem, it can be constructed by successive iterations in the space $C^{\beta,\sim}([0,T];V)$, starting this procedure with the constant function $u_0$. Although these approximations are given on different time intervals $[T_i(\omega),T_{i+1}(\omega)]$ they can be finitely concatenated to one approximation on $[0,T]$ converging to a solution of (\ref{eq7}) in the separable Banach space $C([0,T];V)$ which is measurable.

If we pick initial conditions in a sufficiently small neighborhood of $u_0$, the contraction constants for the corresponding mapping in the Banach fixed point theorem can be chosen to be the same, and therefore $u_0\mapsto \phi(t,\omega,u_0)$ is continuous. Then, by Castaing and Valadier \cite{CasVal77}, the mapping $\phi$ is measurable on $\bB([0,T])\otimes \fF\otimes \bB(V),\bB(V)$. Hence, the
$\bB(R^+)\otimes \fF\otimes \bB(V),\bB(V)$ measurability of this expression follows immediately.

(ii) Since $\omega\mapsto |||\omega|||_{\beta^\prime,0,\tau}$ is measurable, the measurability of $\omega\mapsto T(\omega)$ follows. In the same manner we can argue for $\hat T$.
\qquad \end{proof}\\

In order to establish the growth of the stopping times we need the following lemma.
\begin{lemma}\label{N}
Let $N(\omega)\in N$ be the random number of stopping times in $[-1,0]$ defined by (\ref{eq29}) and (\ref{stop}).
Then we have for $\omega\in\Omega_{\beta^{\prime\prime}}$
\[
    N(\omega)\le \bigg(\frac{|||\omega|||_{\beta^{\prime\prime},-1,0}+\mu}{\mu}\bigg)^{\frac{1}{\beta^{\prime\prime}-\beta^\prime}}.
\]
\end{lemma}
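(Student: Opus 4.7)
The plan is to show that each stopping time step within $[-1,0]$ has a definite minimum length determined by $\mu$ and the $\beta^{\prime\prime}$-H\"older seminorm of $\omega$ on $[-1,0]$; since the total length covered is at most $1$, dividing gives the upper bound on $N(\omega)$.

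First, by Lemma \ref{l7} (continuity of the defining expression) the inequality in the definition of $\hat T$ can be taken as an equality: for each $i<0$ with $T_i(\omega)\in[-1,0]$,
\begin{equation*}
|||\theta_{T_{i+1}(\omega)}\omega|||_{\beta^\prime,\hat T(\theta_{T_{i+1}(\omega)}\omega),0}+\mu\,|\hat T(\theta_{T_{i+1}(\omega)}\omega)|^{1-\beta^\prime}=\mu.
\end{equation*}
Using the shift and writing $\tau_i=T_{i+1}(\omega)-T_i(\omega)=|\hat T(\theta_{T_{i+1}(\omega)}\omega)|$, this becomes
\begin{equation*}
|||\omega|||_{\beta^\prime,T_i(\omega),T_{i+1}(\omega)}+\mu\,\tau_i^{\,1-\beta^\prime}=\mu.
\end{equation*}

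The key step is to relate the $\beta^\prime$- and $\beta^{\prime\prime}$-seminorms on this subinterval. For any $T_i(\omega)\le s<t\le T_{i+1}(\omega)$,
\begin{equation*}
\frac{|\omega(t)-\omega(s)|}{|t-s|^{\beta^\prime}}=\frac{|\omega(t)-\omega(s)|}{|t-s|^{\beta^{\prime\prime}}}\,|t-s|^{\beta^{\prime\prime}-\beta^\prime}\le |||\omega|||_{\beta^{\prime\prime},-1,0}\,\tau_i^{\,\beta^{\prime\prime}-\beta^\prime},
\end{equation*}
so $|||\omega|||_{\beta^\prime,T_i(\omega),T_{i+1}(\omega)}\le |||\omega|||_{\beta^{\prime\prime},-1,0}\,\tau_i^{\,\beta^{\prime\prime}-\beta^\prime}$. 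Since $\tau_i\le 1$ and $\beta^{\prime\prime}-\beta^\prime<1-\beta^\prime$, also $\tau_i^{\,1-\beta^\prime}\le\tau_i^{\,\beta^{\prime\prime}-\beta^\prime}$. Plugging both bounds into the equality above yields
\begin{equation*}
\mu\le\bigl(|||\omega|||_{\beta^{\prime\prime},-1,0}+\mu\bigr)\,\tau_i^{\,\beta^{\prime\prime}-\beta^\prime},
\end{equation*}
that is,
\begin{equation*}
\tau_i\ge\left(\frac{\mu}{|||\omega|||_{\beta^{\prime\prime},-1,0}+\mu}\right)^{\frac{1}{\beta^{\prime\prime}-\beta^\prime}}.
\end{equation*}

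To conclude, observe that the $N(\omega)$ consecutive intervals $[T_{-i}(\omega),T_{-i+1}(\omega)]$ ($i=1,\dots,N(\omega)$) are disjoint and all lie in $[-1,0]$, so the sum of their lengths $\sum_{i=1}^{N(\omega)}\tau_{-i}$ is bounded by $1$. Combined with the uniform lower bound just obtained, this gives
\begin{equation*}
N(\omega)\left(\frac{\mu}{|||\omega|||_{\beta^{\prime\prime},-1,0}+\mu}\right)^{\frac{1}{\beta^{\prime\prime}-\beta^\prime}}\le 1,
\end{equation*}
which is precisely the claim. The only delicate point is the comparison step between the two H\"older seminorms on the subinterval, where one must be careful that the extra factor $\tau_i^{\beta^{\prime\prime}-\beta^\prime}$ dominates the $\tau_i^{1-\beta^\prime}$ term so that both contributions can be absorbed into a single power of $\tau_i$.
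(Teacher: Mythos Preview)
Your proof is correct and follows essentially the same approach as the paper: both arguments use the defining equality for $\hat T$, compare the $\beta'$-seminorm on each subinterval to the $\beta''$-seminorm on $[-1,0]$ via the factor $\tau_i^{\beta''-\beta'}$, absorb the $\mu\tau_i^{1-\beta'}$ term similarly, deduce the uniform lower bound on each step length, and sum over the $N(\omega)$ disjoint subintervals contained in $[-1,0]$. The only cosmetic difference is that the paper phrases the step-length bound directly for $|\hat T(\theta_{T_{-i}(\omega)}\omega)|$ rather than for your $\tau_i$, but these are the same quantities.
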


\begin{proof}
The following estimates follow easily on $\Omega_{\beta^{\prime\prime}}$:
\begin{eqnarray*}
    \mu&=&\sup_{\hat T(\omega)\le s<t\le 0}\frac{|\omega(t)-\omega(s)|}{|t-s|^{\beta^\prime}}+\mu(-\hat T(\omega))^{1-\beta^\prime}\\
    &\le& \bigg(\sup_{\hat T(\omega)\le s<t\le 0}\frac{|\omega(t)-\omega(s)|}{|t-s|^{\beta^{\prime\prime}}}+\mu(-\hat T(\omega))^{1-\beta^{\prime \prime}}\bigg)(-\hat T(\omega))^{\beta^{\prime\prime}-\beta^\prime}\\
    &\le &(|||\omega|||_{\beta^{\prime\prime},-1,0}+\mu)(-\hat T(\omega))^{\beta^{\prime\prime}-\beta^\prime}
\end{eqnarray*}
such that
\[
    |\hat T(\omega)|\ge \bigg(\frac{\mu}{|||\omega|||_{\beta^{\prime\prime},-1,0}+\mu}  \bigg)^{\frac{1}{\beta^{\prime \prime}-\beta^\prime}}.
\]
The same estimate holds for $\hat T(\theta_{T_{-i}(\omega)}\omega)$ as long as {\bf  $\hat T(\theta_{T_{-i}(\omega)}\omega)+T_{-i}(\omega)=T_{-i-1}(\omega)\ge-1$.}
For the smallest number $i$ having this property we set $N(\omega)=i+1$.
Then due to the definition of the stopping times (\ref{stop}) we conclude that
$$
    1\ge |T_{-N(\omega)}(\omega)|=\sum_{i=0}^{N(\omega)-1}|\hat T(\theta_{T_{-i}(\omega)}\omega)|\ge N(\omega)\bigg(\frac{\mu}{|||\omega|||_{\beta^{\prime\prime},-1,0}+\mu}\bigg)^{\frac{1}{\beta^{\prime \prime}-\beta^\prime}},
    $$
and then the result follows. \qquad\end{proof}\\

\begin{lemma}\label{growth}
On $\Omega$ we have

\[
    \liminf_{k\to-\infty}\frac{T_k(\omega)}{k}\ge d>0.
\]
On this set $|T(\theta_{T_i(\omega)}\omega)|^{-\beta}$ is subexponentially growing for $i\to-\infty$.
\end{lemma}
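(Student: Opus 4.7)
The two assertions will be proved separately; the first (linear growth of $|T_k(\omega)|$ in $|k|$) is the main effort, and the second follows from it combined with the sublinear growth property on $\Omega_2$.

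\medskip

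\noindent\emph{First claim.} The plan is to combine the pointwise lower bound on $|\hat T|$ coming from the proof of Lemma~\ref{N} with the ergodic theorem (\ref{eq35}) applied to the function
\[
F(\eta):=\Bigl(\tfrac{\sup_{r\in[0,1]}|||\theta_r\eta|||_{\beta'',-1,0}+\mu}{\mu}\Bigr)^{1/(\beta''-\beta')}.
\]
First, the estimate in the proof of Lemma~\ref{N}, applied to $\eta=\theta_{T_{-i}(\omega)}\omega$, gives
\[
|\hat T(\theta_{T_{-i}(\omega)}\omega)|\ge\Bigl(\tfrac{\mu}{|||\theta_{T_{-i}(\omega)}\omega|||_{\beta'',-1,0}+\mu}\Bigr)^{1/(\beta''-\beta')}\ge F(\theta_{-j}\omega)^{-1},
\]
where $j$ is the index of the unit interval $[-j,-j+1)$ containing $T_{-i}(\omega)$; the second inequality uses that the seminorm of $\omega$ on $[T_{-i}(\omega)-1,T_{-i}(\omega)]\subset[-j-1,-j+1]$ is dominated by the sup appearing in $F(\theta_{-j}\omega)$. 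This is precisely why a sup over $[0,1]$ is built into the function in (\ref{eq35}). Writing $|T_{-k}(\omega)|=\sum_{i=0}^{k-1}|\hat T(\theta_{T_{-i}(\omega)}\omega)|$ and applying the AM--HM inequality then yields
\[
\frac{|T_{-k}(\omega)|}{k}\ \ge\ \frac{k}{\displaystyle\sum_{i=0}^{k-1}F(\theta_{T_{-i}(\omega)}\omega)}.
\]
The ergodic theorem (\ref{eq35}) gives $\frac{1}{n}\int_{-n}^{0}F(\theta_s\omega)\,ds\to d^{-1}$ on $\Omega$; a Riemann-sum-type comparison between this integral and $\sum_i F(\theta_{T_{-i}(\omega)}\omega)$, again exploiting the sup in $F$ to control its value on the (random) positions of the $T_{-i}(\omega)$'s within unit intervals, yields $\limsup_{k\to\infty}\tfrac{1}{k}\sum_i F(\theta_{T_{-i}(\omega)}\omega)\le d^{-1}$, and hence $\liminf_{k\to\infty}|T_{-k}(\omega)|/k\ge d$. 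Since $T_k(\omega)/k=|T_{-k}(\omega)|/k$ for $k<0$, this is the desired bound.

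\medskip

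\noindent\emph{Second claim.} An argument strictly parallel to the proof of Lemma~\ref{N}, applied now to the forward stopping time $T$, gives
\[
|T(\theta_{T_i(\omega)}\omega)|^{-1}\le\Bigl(\tfrac{|||\theta_{T_i(\omega)}\omega|||_{\beta'',0,1}+\mu}{\mu}\Bigr)^{1/(\beta''-\beta')}.
\]
Because $|||\theta_t\omega|||_{\beta'',0,1}=|||\theta_{t+1}\omega|||_{\beta'',-1,0}$, the right-hand side is bounded by a quantity of the same type as $F$ evaluated at $\theta_{T_i(\omega)}\omega$. On $\Omega_2$ the function $t\mapsto\sup_{r\in[0,1]}|||\theta_{t+r}\omega|||_{\beta'',-1,0}$ is sublinearly growing at $\pm\infty$, and by the first part of the lemma $|T_i(\omega)|\le|i|$ (in fact it grows linearly with rate $\ge d$). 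Composing these two facts gives $|i|^{-1}\log^+|T(\theta_{T_i(\omega)}\omega)|^{-\beta}\to 0$ as $i\to-\infty$, which is exactly (\ref{eq39b}).

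\medskip

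\noindent\emph{Main obstacle.} The delicate point is in the first claim: transferring the \emph{continuous} ergodic average in (\ref{eq35}) to the \emph{discrete} average of $F$ along the random points $T_{-i}(\omega)$, and verifying that the limiting constant is $d^{-1}$ rather than some constant multiple of it. The sup-over-$[0,1]$ built into $F$ was introduced precisely for this discretisation step, since it provides a uniform domination of $F(\theta_t\omega)$ by $F$ at the integer point of the containing unit interval; nevertheless, a careful sliding-window comparison is needed to avoid losing the constant.
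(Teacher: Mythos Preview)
Your first-claim argument has a genuine gap at precisely the point you flag as the ``main obstacle'': the asserted bound $\limsup_{k\to\infty}\frac{1}{k}\sum_{i=0}^{k-1}F(\theta_{T_{-i}(\omega)}\omega)\le d^{-1}$ cannot be established, and in fact the opposite inequality is what one should expect. Writing the sum as $\sum_j m_j\, F(\theta_{-j}\omega)$, where $m_j$ is the number of stopping times falling in the unit interval indexed by $j$, the quantity $\frac{1}{k}\sum_i F$ is a weighted average of the values $F(\theta_{-j}\omega)$ with weights $m_j$. But $m_j$ is positively correlated with $F(\theta_{-j}\omega)$: where the noise is rough (large $F$), the increments $|\hat T|$ are small and many stopping times accumulate. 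In the extremal regime $|\hat T|\approx F^{-1}$ one has $m_j\approx F(\theta_{-j}\omega)$, and the weighted average tends to $E_P[F^2]/E_P[F]\ge E_P[F]=d^{-1}$, with strict inequality unless $F$ is a.s.\ constant. Thus the AM--HM route delivers only $\liminf |T_{-k}|/k\ge E_P[F]/E_P[F^2]$, which is strictly smaller than the required $d$ in general. The sup built into $F$ does not rescue this step, because here you need an \emph{upper} bound on the discrete sum, not a lower one.

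The paper sidesteps this correlation issue by a counting argument that never sums $F$ along the random stopping points. Let $M_{-j}(\omega)$ be the number of stopping times in $(-j-1,-j]$; the order property (Lemma~\ref{l12}) yields $M_{-j}(\omega)\le N(\theta_{-j}\omega)$, and Lemma~\ref{N} together with the sup in $F$ gives $N(\theta_{-j}\omega)\le\int_{-j-1}^{-j}F(\theta_q\omega)\,dq$. Summing over $j$ and invoking~(\ref{eq35}), the total count $K_n:=\sum_{j=0}^{n-1}M_{-j}(\omega)$ of stopping times in $(-n,0]$ satisfies $\limsup_n K_n/n\le d^{-1}$. On the other hand $|T_{-K_n}(\omega)|\ge n-1$ holds trivially, since all $K_n$ of these stopping times lie in $(-n,0]$. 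Combining, $\liminf_n |T_{-K_n}|/K_n\ge d$, and a sandwiching argument (using the sublinear growth of $N(\theta_{-n}\omega)$) extends this to arbitrary $k$. The key difference from your approach is that the trivial geometric bound $|T_{-K_n}|\ge n-1$ replaces your lower bound via $\sum_i F^{-1}$, and the ergodic theorem is used to control the \emph{count} $K_n$ rather than the sum $\sum_i F$ along stopping times. Your argument for the second claim is correct and matches the paper's.
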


\begin{proof}
Let $M_j(\omega)$ be the number of stopping times $T_k(\omega)$ in $(j-1,j]$.

For $j\in- Z$ let $k_j(\omega)$ be the biggest integer such that $T_{k_j(\omega)} \leq j$. Then choosing $t_2=j$ and $t_1=T_{k_j(\omega)}(\omega)$ from (\ref{eq33}) we know
\begin{eqnarray*}
    \cdots & < &T_{k_j(\omega)-2}(\omega)        \le j+ \hat T(\theta_j\omega)+\hat T(\theta_{\hat T(\theta_j\omega)+j}\omega) =j+T_{-2}(\theta_j\omega)\\&< & T_{k_j(\omega)-1}(\omega) \le j+\hat T(\theta_j\omega)
     =j+ T_{-1}(\theta_j\omega)  <   T_{k_j(\omega)}(\omega)   \le j
\end{eqnarray*}
such that $M_j(\omega)=N(\theta_j\omega)$ or $M_j(\omega)=N(\theta_j\omega)-1$, where $N(\omega)$ has been introduced in Lemma \ref{N}. Hence

\begin{eqnarray*}
  \limsup_{n\to\infty,n>1}\frac{\sum_{j=0}^{n-1}M_{-j}(\omega)}{n-1} &\le &\limsup_{n\to\infty,n>1}\frac{\sum_{j=0}^{n-1}N(\theta_{-j}\omega)}{n-1}\\
  &\le & \limsup_{n\to\infty,n>1}\frac{\sum_{j=0}^{n-1} \bigg(\frac{|||\theta_{-j}\omega|||_{\beta^{\prime\prime},-1,0}+\mu}{\mu}\bigg)^{\frac{1}{\beta^{\prime\prime}-\beta^\prime}}}{n-1}
\end{eqnarray*}
which follows from Lemma \ref{N} for $\omega\in \Omega_{\beta^{\prime\prime}}$. The terms under the sum of the numerator on the right hand side can be estimated by
\[
    \bigg(\frac{\sup_{r\in[0,1]}|||\theta_{r+q-j}\omega|||_{\beta^{\prime\prime},-1,0}+\mu}{\mu}\bigg)^{\frac{1}{\beta^{\prime\prime}-\beta^\prime}}\;{\rm for any}\ q\in[-1,0]
\]
 and hence
\[
     N(\theta_{-j}\omega)\le\int_{-1}^0 \bigg(\frac{\sup_{r\in[0,1]}|||\theta_{r+q-j}\omega|||_{\beta^{\prime\prime},-1,0}+\mu}{\mu}\bigg)^{\frac{1}{\beta^{\prime\prime}-\beta^\prime}}dq.
\]
Then we obtain the estimate for $n>1$
\[
  \frac{\sum_{j=0}^{n-1}N(\theta_{-j}\omega)}{n-1} \le \frac{1}{n-1}\int_{-n}^0 \bigg(\frac{\sup_{r\in[0,1]}|||\theta_{r+q}\omega|||_{\beta^{\prime\prime},-1,0}+\mu}{\mu}\bigg)^{\frac{1}{\beta^{\prime\prime}-\beta^\prime}}dq
\]
where the right hand side converges to $1/d$ for $\omega\in\Omega_1$ and $n\to\infty$ by (\ref{eq35}).
Then we have
\[
    \liminf_{n \to\infty}\frac{\bigg|T_{-\sum_{j=0}^{n-1} M_{-j}(\omega)}(\omega)\bigg|}{\sum_{j=0}^{n-1} M_{-j}(\omega)}\ge
    \liminf_{n \to\infty}\frac{(n-1)}{\sum_{j=0}^{n-1} M_{j-1}(\omega)}\geq d.
\]
To see that finally the conclusion holds true, choose for every $k\in -N$ an $n=n(k,\omega)$ such that
\[
    -\sum_{j=0}^{n}M_{-j}(\omega)< k\le -\sum_{j=0}^{n-1}M_{-j}(\omega).
\]
Thus
\begin{eqnarray*}
   \liminf_{k\to -\infty}\frac{|T_{-k}(\omega)|}{|k|}&\ge& \liminf_{n \to-\infty}\frac{\bigg|T_{-\sum_{j=0}^{n-1} M_{-j}(\omega)}(\omega)\bigg|}{\sum_{j=0}^{n} M_{-j}(\omega)}
   \ge \liminf_{n \to-\infty}\frac{\bigg|T_{-\sum_{j=0}^{n-1} M_{-j}(\omega)}(\omega)\bigg|}{\sum_{j=0}^{n-1} M_{-j}(\omega)+N(\theta_{-n}\omega)}\\
   &= &\liminf_{n \to-\infty}\frac{n-1}{\sum_{j=0}^{n-1} M_{-j}(\omega)}\ge d
\end{eqnarray*}
by the sublinear convergence of $n\mapsto N(\theta_n\omega)$ (thanks to the ergodic theorem) and the at least asymptotical linear growth of $n\mapsto \sum_{j=0}^{n-1} M_{-j}(\omega)+N(\theta_{-n}\omega)$.

To prove the second part of the statement, note that similar to the estimate of $\hat T$ in the proof of Lemma \ref{N}, for $\omega\in\Omega$,
\[
|T(\theta_{T_i(\omega)}\omega)|^{-\beta}\le
\bigg(\frac{|||\theta_{T_i(\omega)}\omega|||_{\beta^{\prime\prime},0,1}+\mu}{\mu}\bigg)^{\frac{\beta}{\beta^{\prime\prime}-\beta^\prime}}\\
\]
for large $|i|$, which follows from the fact that the mapping $i\mapsto |||\theta_{T_i(\omega)}\omega|||_{\beta^{\prime\prime},0,1}$ grows sublinearly in $\Omega \subset \Omega_2$, and therefore, the previous inequality shows the subpolynomial growth of $|T(\theta_{T_i(\omega)}\omega)|^{-\beta}$. \qquad \end{proof}\\

We can finally prove the main result of this section:\\

\begin{theorem}
The pullback attractor stated in Theorem \ref{t5} is a random attractor attracting the random tempered sets $\hat{\dD}$.

\end{theorem}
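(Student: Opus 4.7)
The plan is to import the pathwise result of Lemma \ref{t5} to the probabilistic setting by checking that, on the full measure set $\Omega = \Omega_1 \cap \Omega_2 \cap \Omega_{\beta^{\prime\prime}}$ constructed in this section, all of its hypotheses hold, and then to upgrade the pullback attractor $\aA$ to a random attractor by combining the measurability of $\phi$ and of the stopping times from Lemma \ref{l17} with the countable representation afforded by Theorem \ref{t3}.

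First I would verify that the assumptions (\ref{eq21}), (\ref{eq39}) and (\ref{eq39b}) hold pointwise on $\Omega$. The lower growth rate $\liminf_{k \to -\infty} |T_k(\omega)|/|k| \ge d > 0$ together with the subexponential growth of $|T(\theta_{T_i(\omega)}\omega)|^{-\beta}$ are precisely the conclusions of Lemma \ref{growth}. By choosing the covariance operator $Q$ with sufficiently small trace, the ergodic limit (\ref{eq35}) yields $d$ close to $1$, which for the fixed $\nu > 0$ ensures $\nu + d > 1$ and hence (\ref{eq39}). Consequently Lemma \ref{t5} applies to every $\omega \in \Omega$: the family $\aA(\omega) := \aA(0,\omega)$ inherited from the discrete pullback attractor of Theorem \ref{t4} is $\phi$-invariant and attracts every $D \in \dD_{R,V}^0$.

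The next step is the inclusion $\hat\dD \subset \dD_{R,V}^0$. For any random tempered set $D$ the radius $r(\omega)$ satisfies $\lim_{t \to \pm\infty} \log^+ r(\theta_t\omega)/|t| = 0$; in particular the backward $\limsup$ vanishes, so $D$ belongs to $\dD_{R,V}^0$. The attraction statement of Lemma \ref{t5} therefore delivers pullback attraction of each random tempered set by $\aA$, pathwise on $\Omega$, which is exactly the attraction required in the definition of a random attractor.

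The remaining and more delicate point is to prove that $\aA$ is itself a random closed set. The plan is to apply Theorem \ref{t3} to the discrete cocycle $\Phi$, which produces the countable representation
\begin{eqnarray*}
\aA(0,\omega) \;=\; \bigcap_{n \ge n_0(\omega)} \overline{\bigcup_{i \ge n} \Phi(i, -i, \omega, B(-i, \omega))}.
\end{eqnarray*}
Lemma \ref{l17} provides joint measurability of $\phi$ and of the stopping times $T, \hat T$; by the cocycle formula (\ref{stop}) each $T_i$ is then measurable, so $\Phi$ is measurable, and the radii $R(i,\omega)$ in (\ref{eq27}) are countable sums of measurable functions, so $B$ is a random closed ball. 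Since $V$ is separable and the operations above are countable, one concludes that $\omega \mapsto {\rm dist}(u, \aA(\omega))$ is measurable for every $u \in V$, i.e., $\aA$ is a random closed set in the sense of \cite{CasVal77}. The main technical obstacle here is exactly this last point: one has to argue the measurable selection of the absorption time $n_0(\omega)$ associated with the absorbing ball $B$ and check that the countable-operation representation really produces a measurable version of the attractor. Once this is established, the three steps together show that $\aA$ is a compact random invariant set that pullback-attracts every element of $\hat\dD$, hence the random attractor for the RDS generated by (\ref{eq7}).
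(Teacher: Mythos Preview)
Your proposal follows essentially the same strategy as the paper's proof---verify that Lemma \ref{t5} applies on $\Omega$, use $\hat\dD\subset\dD_{R,V}^0$ for attraction, and establish measurability of $\aA$ through the countable representation coming from Theorem \ref{t3}---and these parts are correct.

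There is, however, a genuine omission. By the definition of a random attractor you must also show that $\aA\in\hat\dD$, i.e., that $\aA$ is a \emph{tempered} random set. You prove that $\aA$ is a random closed set, compact, invariant, and attracting, but you never verify temperedness. The paper handles this as follows: from the discussion after Lemma \ref{t5} one has $\aA\in\dD_{R,V}^{\nu/\check d}$ with $\nu/\check d<\infty$, and then Lemma \ref{l8} (the ergodic dichotomy, based on Arnold's Proposition 4.1.3, which forces the growth rate in (\ref{eq212}) to lie in $\{0,+\infty\}$) upgrades this finite exponential bound to temperedness. Without invoking ergodicity at this point your argument does not close, because the pathwise results of Section 3 only place $\aA$ in $\dD_{R,V}^{\nu/\check d}$, not in $\hat\dD$.

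A minor remark on your measurability argument: your worry about a ``measurable selection of the absorption time $n_0(\omega)$'' is unnecessary. The attractor can be represented as
\[
\aA(\omega)=\bigcap_{j\in Z^+}\overline{\bigcup_{i\ge j}\overline{\Phi(i,-i,\omega,B(-i,\omega))}},
\]
with the intersection taken over all $j\in Z^+$ rather than starting at an $\omega$-dependent absorption time; this is the form the paper uses, and it sidesteps the issue entirely.
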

\begin{proof}
Since the stopping times $T_i$ are random variables we obtain that the radii of the absorbing balls $R(i,\omega)$ are random variables, and therefore the balls
$B(i,\omega)$ are random sets. Note that
\[
  \aA(\omega)=\bigcap_{j\in Z^+}\overline{\bigcup_{i\in Z^+,i\ge j}\overline{\Phi(i,-i,\omega,B(-i,\omega))}}.
\]
$\Phi$ inherits the measurable properties of $\phi$ and in particular $u\mapsto \Phi(i,-i,\omega,u)$ is continuous. Hence $\overline{\Phi(i,-i,\omega,B(-i,\omega))}$ is a random set. Then by \cite{FlaSchm96} we have that $\aA$ is a random set.
On the other hand, we have that $\aA(\theta_{T_i(\omega)}\omega)\subset B(i,\omega)$ and $\aA\in \dD_{R,V}^{\frac{\nu}{\check d}}$, for $\check d$ given by (\ref{dd}). Furthermore, Lemma \ref{l8} shows that
$\aA$ is a set in $\hat\dD$, and since $\hat \dD\subset \dD_{R,V}^0$ the conclusion follows from Lemma \ref{t5}.
\qquad\end{proof}

\section{Appendix}

We present here the proof of Lemma \ref{l9-u}.\\

\begin{proof}We have to estimate
\begin{eqnarray*}
 \|u\|_{\beta,T_{i-1}(\theta_{T_j}\omega),T_{i}(\theta_{T_j}\omega)} =\sup_{T_{i-1}(\theta_{T_j}\omega)\le t\le T_{i}(\theta_{T_j}\omega)}|u(t)|+|||u|||_{\beta,T_{i-1}(\theta_{T_j}\omega),T_{i}(\theta_{T_j}\omega)}.
 \end{eqnarray*}
First of all, considering $u_0\in V_\beta$, from (\ref{eq1}) and (\ref{eq2})  we obtain
\[\|S(\cdot)u_0\|_{\beta,T_{i-1}(\theta_{T_j}\omega),T_{i}(\theta_{T_j}\omega)}\le c e^{-\lambda_1 T_{i-1}(\theta_{T_j}\omega)}|u_0|_{V_\beta}.\]
Thanks to additivity of the above integral, see \cite{ChGGSch12}, for $t\in [T_{i-1}(\theta_{T_j} \omega), T_{i}(\theta_{T_j} \omega)]$ the following splitting holds true:
\begin{eqnarray*}
\int_0^t S(t-r)G(u(r))d\theta_{T_j}\omega&=&\int_{T_{i-1}(\theta_{T_j} \omega)}^t S(t-r)G(u(r))d\theta_{T_j}\omega\\
&+&\sum_{m=1}^{i-1} \int_{T_{m-1}(\theta_{T_j} \omega)}^{T_{m}(\theta_{T_j} \omega)} S(t-r)G(u(r))d\theta_{T_j}\omega,
\end{eqnarray*}
and therefore, in particular,
\begin{eqnarray}
 &  & \bigg| \bigg| \bigg|\int_0^\cdot S(\cdot-r)G(u(r))d\theta_{T_j}\omega\bigg| \bigg| \bigg|_{\beta,T_{i-1}(\theta_{T_j}\omega),T_{i}(\theta_{T_j}\omega)} \nonumber \\
  &  \le & \bigg| \bigg| \bigg|\int_{T_{i-1}(\theta_{T_j}\omega)}^\cdot S(\cdot-r)G(u(r))d\theta_{T_j}\omega\bigg| \bigg| \bigg|_{\beta,T_{i-1}(\theta_{T_j}\omega),T_{i}(\theta_{T_j}\omega)} \label{eq15}\\
    &+&\sum_{m=1}^{i-1}\bigg| \bigg| \bigg|\int_{T_{m-1}(\theta_{T_j}\omega)}^{T_{m}(\theta_{T_j}\omega)} S(\cdot-r)G(u(r))d\theta_{T_j}\omega\bigg| \bigg| \bigg|_{\beta,T_{i-1}(\theta_{T_j}\omega),T_{i}(\theta_{T_j}\omega)}.\nonumber
\end{eqnarray}
Regarding the first term on the right hand side of (\ref{eq15}), notice that for $s<t\in [T_{i-1}(\theta_{T_j} \omega), T_{i}(\theta_{T_j} \omega)]$, we have that

\begin{eqnarray}
 \qquad  & \bigg|&\int_{T_{i-1}(\theta_{T_j}\omega)}^t S(t-r)G(u(r))d\theta_{T_j}\omega-\int_{T_{i-1}(\theta_{T_j}\omega)}^s S(s-r)G(u(r))d\theta_{T_j}\omega\bigg| \nonumber \\[-1.5ex]
    &=&\bigg|\int_{s}^t S(t-r)G(u(r))d\theta_{T_j}\omega+\int_{T_{i-1}(\theta_{T_j}\omega)}^s (S(t-s)-{\rm Id})S(s-r)G(u(r))d\theta_{T_j}\omega\bigg| \nonumber \\[-1.5ex]
    \label{p1}\\[-1.5ex]
    &\le& \bigg|\int_{s^\prime}^{t^\prime} S(t^\prime-r)G(u(r+T_{i-1}(\theta_{T_j}\omega)))d\theta_{T_{i+j-1}}\omega\bigg| \nonumber \\[-1.5ex]
        & +&c(t^\prime-s^\prime)^\beta
    \bigg|\int_0^{s^\prime} S(s^\prime-r)G(u(r+T_{i-1}(\theta_{T_j}\omega)))d\theta_{T_{i+j-1}}\omega\bigg|_{V_\beta}\nonumber.
\end{eqnarray}
where we have performed the change of variable $\tilde r=r-T_{i-1}(\theta_{T_j}\omega)$ and later on renamed $\tilde r$ as $r$, see Lemma \ref{l3}. Therefore, in the previous expression, $s^\prime=s-T_{i-1}(\theta_{T_j}\omega)$, $t^\prime=t-T_{i-1}(\theta_{T_j}\omega)$. Note that then $s^\prime<t^\prime \in [0, T(\theta_{T_{i+j-1}} \omega)]$.

The first term on the right hand side of $(\ref{p1}) $ can be estimated in the following way:
\begin{eqnarray*}
& \bigg| & \int_{s^\prime}^{t^\prime} S(t^\prime-r)G(u(r+T_{i-1}(\theta_{T_j}\omega)))d\theta_{T_{i+j-1}}\omega\bigg| \\
 & \leq&  c|||\theta_{T_{i+j-1}}\omega|||_{\beta^\prime,0,T(\theta_{T_{i+j-1}}\omega)} \int_{s^\prime}^{t^\prime}
    \bigg(\frac{(c_G+c_{DG}|u(r+T_{i-1}(\theta_j \omega))|)e^{-\lambda_1(t^\prime-r)}}{(r-s^\prime)^\alpha}\\
     &\qquad& +\int_{s^\prime}^r\frac{(c_G+c_{DG}|u(r+T_{i-1}(\theta_j \omega))|)e^{-\lambda_1(t^\prime-r)}(r-q)^{\beta}}
    {(t^\prime-r)^{\beta}(r-q)^{\alpha+1}}dq\\
    &\qquad &+ \int_{s^\prime}^r\frac{c_{DG}e^{-\lambda_1(t^\prime-r)}|u(r+T_{i-1}(\theta_j \omega))-u(q+T_{i-1}(\theta_j \omega))|}{(r-q)^{\alpha+1}}dq\bigg)
   (t^\prime-r)^{\beta^\prime+\alpha-1}dr\\
   &\leq &  c|||\theta_{T_{i+j-1}}\omega|||_{\beta^\prime,0,T(\theta_{T_{i+j-1}}\omega)} (1+\|u\|_{\beta, T_{i-1}(\theta_{T_j}\omega),
    T_{i}(\theta_{T_j}\omega)}) (t^\prime -s^\prime)^{\beta^\prime}\\
    &\leq & c\mu(1+\|u\|_{\beta,T_{i-1}(\theta_{T_j}\omega),
    T_{i}(\theta_{T_j}\omega)})(t-s)^\beta.
\end{eqnarray*}
We have used the fact that, by definition, in the interval $[0,T(\theta_{T_i+j-1} \omega)]$ the norm of the noise paths are smaller than $\mu$.

Furthermore, for the last term on the right hand side of $(\ref{p1})$, by applying Lemma \ref{l11} yields that
\begin{eqnarray*}
 &  &(t^\prime-s^\prime)^\beta
    \bigg|\int_{0}^{s^\prime} S(s^\prime-r)G(u(r+T_{i-1}(\theta_{T_j}\omega)))d\theta_{T_{i+j-1}}\omega\bigg|_{V_\beta} \\
    &  \le &(t^\prime-s^\prime)^\beta c\mu(1+\|u\|_{\beta,T_{i-1}(\theta_{T_j}\omega),
    T_{i}(\theta_{T_j}\omega)})\\
    & \le & (t-s)^\beta c\mu(1+\|u\|_{\beta,T_{i-1}(\theta_{T_j}\omega),
    T_{i}(\theta_{T_j}\omega)}).
\end{eqnarray*}
Moreover, for the terms of the sum on (\ref{eq15}), for $t\in [T_{i-1} (\theta_{T_j} \omega), T_{i} (\theta_{T_j} \omega)]$,
\begin{eqnarray*}
    & &\int_{T_{m-1}(\theta_{T_j}\omega)}^{T_{m}(\theta_{T_j}\omega)}  S(t-r)G(u(r))d\theta_{T_j}\omega\\
    &=& \int_{0}^{T(\theta_{T_{m+j-1}\omega)}} S(t-(r+T_{m-1}(\theta_{T_j}\omega)))G(u(r+T_{m-1}(\theta_{T_j}\omega)))d\theta_{T_{m+j-1}}\omega\\
    &= & \int_{0}^{T(\theta_{T_{m+j-1}\omega)}} S(t^\prime+T_{i-1}(\theta_{T_j}\omega)-r-T_{m-1}(\theta_{T_j}\omega))G(u(r+T_{m-1}(\theta_{T_j}\omega)))d\theta_{T_{m+j-1}}\omega\\
    &= &S(t^\prime+T_{i-1}(\theta_{T_j}\omega)-T_{m}(\theta_{T_j}\omega))\\
    && \times \int_0^{T(\theta_{T_{m+j-1}})} S(T(\theta_{T_{m+j-1}}\omega)-r)
    G(u(r+T_{m-1}(\theta_{T_j}\omega)))
    d\theta_{T_{m+j-1}}\omega
\end{eqnarray*}
for $t^\prime=t-T_{i-1}(\theta_{T_j}\omega) \in [0, T(\theta_{T_{i+j-1}}\omega)]$. We have used above that $T_{m-1}(\theta_{T_j}\omega) =T_{m}(\theta_{T_j}\omega)-T(\theta_{m+j-1}\omega)$. Therefore, for $s^\prime<t^\prime\in [0, T(\theta_{T_{j+i-1}}\omega)]$ we have

\begin{eqnarray*}
     &&\bigg|S(t^\prime+T_{i-1}(\theta_{T_j}\omega)-T_{m}(\theta_{T_j}\omega))\\
     && \times \int_0^{T(\theta_{T_{m+j-1}})} S(T(\theta_{T_{m+j-1}}\omega)-r)
    G(u(r+T_{m-1}(\theta_{T_j}\omega)))
    d\theta_{T_{m+j-1}}\omega \\
    &-&S(s^\prime+T_{i-1}(\theta_{T_j}\omega)-T_{m}(\theta_{T_j}\omega))\\
    && \times \int_0^{T(\theta_{T_{m+j-1}})} S(T(\theta_{T_{m+j-1}}\omega)-r)
    G(u(r+T_{m-1}(\theta_{T_j}\omega)))
    d\theta_{T_{m+j-1}}\omega\bigg|\\
    &\leq &\bigg| (S(t^\prime-s^\prime)-{\rm Id})S(s^\prime+T_{i-1}(\theta_{T_j}\omega)-T_{m}(\theta_{T_j}\omega)) \\
    &\qquad& \times  \int_0^{T(\theta_{T_{m+j-1}})} S(T(\theta_{T_{m+j-1}}\omega)-r)
    G(u(r+T_{m-1}(\theta_{T_j}\omega)))
    d\theta_{T_{m+j-1}}\omega\bigg|\\
    &\leq& (t^\prime-s^\prime)^\beta e^{-\lambda_1(T_{i-1}(\theta_{T_j}\omega)-T_m(\theta_{T_j}\omega))} \\
    &\qquad & \times \bigg|\int_0^{T(\theta_{T_{m+j-1}})}  S(T(\theta_{T_{m+j-1}}\omega)-r)
    G(u(r+T_{m-1}(\theta_{T_j}\omega)))
    d\theta_{T_{m+j-1}}\omega\bigg|_{V_\beta}\\
    & \leq &(t-s)^\beta e^{-\lambda_1(T_{i-1}(\theta_{T_j}\omega)-T_m(\theta_{T_j}\omega))} c\mu(1+\|u\|_{\beta,T_{m-1}(\theta_{T_j}\omega),
    T_{m}(\theta_{T_j}\omega)}).
\end{eqnarray*}
Note that to estimate
$$\displaystyle{\sup_{T_{i-1}(\theta_{T_j}\omega)\le t\le T_{i}(\theta_{T_j}\omega)}|u(t)|}$$
 it is enough to estimate $ |u(T_{i-1}(\theta_{T_j}\omega))|$ and $|||u|||_{\beta, T_{i-1}(\theta_{T_j}\omega), T_{i}(\theta_{T_j}\omega)}$. For the former, following the same steps as above, we obtain
\begin{eqnarray*}
 &  & |u(T_{i-1}(\theta_{T_j}\omega))|\le |S(T_{i-1}(\theta_{T_j}\omega))u_0|+\bigg|\int_0^{T_{i-1}(\theta_{T_j} \omega)} S(T_{i-1}(\theta_{T_j} \omega)-r)G(u(r))d\theta_{T_j}\omega\bigg|\\
    &\le & c e^{-\lambda_1 T_{i-1}(\theta_{T_j}\omega)}|u_0|+ \sum_{m=1}^{i-1}\bigg|\int_{T_{m-1}(\theta_{T_j}\omega)}^{T_{m}(\theta_{T_j}\omega)} S(T_{i-1}(\theta_{T_j} \omega)-r)G(u(r))d\theta_{T_j}\omega\bigg|\\
    &\leq  &c e^{-\lambda_1 T_{i-1}(\theta_{T_j}\omega)}|u_0|+\sum_{m=1}^{i-1} \bigg|S(T_{i-1}(\theta_{T_j} \omega)-T_{m}(\theta_{T_j}\omega))\\
    &&\qquad  \times \int_0^{T(\theta_{T_{m+j-1}})} S(T(\theta_{T_{m+j-1}}\omega)-r)
    G(u(r+T_{m-1}(\theta_{T_j}\omega)))
    d\theta_{T_{m+j-1}}\omega\bigg|\\
    &\leq & c e^{-\lambda_1 T_{i-1}(\theta_{T_j}\omega)}|u_0|+c\mu \sum_{m=1}^{i-1} e^{-\lambda_1(T_{i-1}(\theta_{T_j} \omega)-T_{m}(\theta_{T_j}\omega))} (1+\|u\|_{\beta,T_{m-1}(\theta_{T_j}\omega),
    T_{m}(\theta_{T_j}\omega)}).
    \end{eqnarray*}
Collecting all the previous estimates we obtain the conclusion, renaming all appearing constants again as $c$. \qquad\end{proof}


\end{document}